\documentclass{amsart}

\usepackage[vcentermath]{youngtab}
\usepackage{amssymb}

\newtheorem{theorem}{Theorem}
\newtheorem{lemma}{Lemma}
\newtheorem{proposition}{Proposition}

\theoremstyle{definition}

\newtheorem{example}{Example}
\newtheorem{remark}{Remark}
\newtheorem{notation}{Notation}

\begin{document}

\title[On smooth orbital varieties]{On the existence of smooth orbital varieties in simple Lie algebras}

\author{Lucas Fresse}\thanks{L. Fresse is supported in part by the ISF Grant Nr. 797/14 and by the ANR project GeoLie ANR-15-CE40-0012.}
\address{Institut \'Elie Cartan, Universit\'e de Lorraine, 54506 Vandoeu\-vre-l\`es-Nancy, France}
\email{lucas.fresse@univ-lorraine.fr}
\author{Anna Melnikov}
\address{Department of Mathematics,
University of Haifa, Haifa 31905, Israel}
\email{melnikov@math.haifa.ac.il}

\keywords{Springer fibers; orbital varieties; induced nilpotent orbits; classical groups; domino tableaux}
\subjclass[2010]{14L30; 14M15; 17B08; 05E15}
\date{\today}

\maketitle

\begin{abstract}
The orbital varieties are the irreducible components of the
intersection between a nilpotent orbit and a Borel
subalgebra of the Lie algebra of a reductive group. There is a
geometric correspondence between orbital varieties and irreducible components of
Springer fibers. In type A, a construction due to Richardson implies
that every nilpotent orbit contains at least one smooth orbital
variety and every Springer fiber contains at least one smooth
component. In this paper, we show that this property is also true
for the other classical cases. Our proof uses the interpretation of
Springer fibers as varieties of isotropic flags and van Leeuwen's
parametrization of their components in terms of domino tableaux. In
the exceptional cases, smooth orbital varieties do not arise in
every nilpotent orbit, as already noted by Spaltenstein. We however
give a (non-exhaustive) list of nilpotent orbits which have this
property. Our treatment of exceptional cases relies on an induction
procedure for orbital varieties, similar to the induction procedure
for nilpotent orbits.
\end{abstract}

\section{Introduction}

\subsection{Springer fibers, and a question}

\label{section-1.1}

Let $G$ be a connected reductive algebraic group over $\mathbb{K}$
(an algebraically closed field of characteristic zero), of Lie
algebra $\mathfrak{g}$. The flag variety of $G$ is defined by
$\mathcal{B}=G/B$, where $B\subset G$ is a Borel subgroup. It can
also be viewed as the set of Borel subalgebras
$\mathfrak{b}\subset\mathfrak{g}$.

For every nilpotent element $x\in\mathfrak{g}$, we define
\[\mathcal{B}_x=\{\mathfrak{b}\in\mathcal{B}:x\in\mathfrak{b}\}.\]
Then $\mathcal{B}_x$ is a projective subvariety of $\mathcal{B}$. The varieties $\mathcal{B}_x$ are called Springer fibers. 

Every variety $\mathcal{B}_x$ is connected and equidimensional \cite{Spaltenstein.Topology}.
Its irreducible components may be singular \cite[\S II.11]{Spaltenstein} and they have an intricate intersection pattern.
The geometry of Springer fibers is a challenging topic of current interest in geometric representation theory (see, e.g., the survey paper \cite{Tymoczko}).

Let us consider the following question:
\begin{equation}
\label{1}
\mbox{does every Springer fiber $\mathcal{B}_x$ have a smooth irreducible component?}
\end{equation}

The answer to (\ref{1}) is known to be positive in the case of
$G=\mathrm{SL}_n(\mathbb{K})$ \cite[\S II.5]{Spaltenstein} (see also
Section \ref{section-SLn} below). For some particular nilpotent
elements $x\in\mathfrak{sl}_n(\mathbb{K})$ we even have that all the
components of $\mathcal{B}_x$ are smooth \cite{FM-2010}.
When $x$ runs over all the nilpotent classes of $\mathfrak{sl}_n(\mathbb{K})$, most of the components of the $\mathcal{B}_x$'s nevertheless appear to be singular \cite[Appendix 2]{FM-2013}.

The answer to (\ref{1}) is negative in the case of exceptional
groups, already in type $\mathrm{G}_2$ \cite[\S II.11.4]{Spaltenstein} (see also Proposition \ref{P2} below).

In this paper, {\em we affirmatively answer
question (\ref{1}) in the case where $G$ is one of the other classical groups $\mathrm{SO}_n(\mathbb{K})$ and $\mathrm{Sp}_{2n}(\mathbb{K})$}.

Our main result (stated in Theorem \ref{T1} below) is actually more precise.
Before stating this result, we describe a parallel approach to question (\ref{1})
in the following subsections \ref{section-1.2}--\ref{section-1.3}.

\subsection{Orbital varieties, and another question}
\label{section-1.2}
Let $G\times\mathfrak{g}\to\mathfrak{g}$, $(g,x)\mapsto g\cdot x$ denote the adjoint action.
We fix a Borel subgroup $B\subset G$ and denote by $\mathfrak{n}$ the nilpotent radical of its Lie algebra.
The set $\mathcal{N}:=G\cdot\mathfrak{n}$ is the nilpotent cone of $\mathfrak{g}$.
It consists of finitely many adjoint ($G$-)orbits, called nilpotent orbits.

Any nilpotent orbit $\mathcal{O}_x:=G\cdot x$ is a quasi-affine
algebraic variety, hence so is the intersection
$\mathcal{O}_x\cap\mathfrak{n}$. The variety
$\mathcal{O}_x\cap\mathfrak{n}$ is equidimensional of dimension
$\frac{1}{2}\dim\mathcal{O}_x$~\cite{Spaltenstein.Topology}. Its
irreducible components are called orbital varieties.
We ask:
\begin{equation}
\label{2}
\mbox{is there a smooth orbital variety in every nilpotent orbit $\mathcal{O}_x$ of $\mathfrak{g}$?}
\end{equation}
In the following subsection we stress that orbital varieties and Springer fiber components,
as well as questions (\ref{1}) and (\ref{2}),
are closely related.

\subsection{Relation between questions (\ref{1}) and (\ref{2})}
\label{section-1.3}
Note that the Borel subgroup $B$ acts on the variety $\mathcal{O}_x\cap\mathfrak{n}$.
The group $B$ is connected, hence every orbital variety of $\mathcal{O}_x\cap\mathfrak{n}$ is $B$-stable.
Also the stabilizer $Z_G(x):=\{g\in G:g\cdot x=x\}$ acts on the Springer fiber $\mathcal{B}_x$.
The group $Z_G(x)$ is not connected in general, hence the finite group $A_x:=Z_G(x)/Z_G(x)^0$ acts by permuting the irreducible components of $\mathcal{B}_x$.

Relying on the two smooth maps $G\to\mathcal{B}=G/B$ and $G\to\mathcal{O}_x=G/Z_G(x)$, we have \cite{Spaltenstein.Topology, FM-2013}:

\begin{proposition}
\label{P1}
Let $x\in\mathfrak{g}$ be nilpotent.
Then, there is a one-to-one correspondence $X\mapsto \Xi(X)$ between the orbital varieties of $\mathcal{O}_x\cap\mathfrak{n}$ and the $A_x$-orbits of irreducible components of  $\mathcal{B}_x$. This correspondence satisfies the following properties:
\begin{itemize}
\item[\rm (a)] The orbital variety $X$ is smooth if and only if the components $C\in\Xi(X)$ are smooth and pairwise disjoint;
\item[\rm (b)] In particular if $\mathcal{O}_x\cap\mathfrak{n}$ contains a smooth orbital variety, then $\mathcal{B}_x$ contains a smooth component; and if $\mathcal{B}_x$ contains a smooth, $Z_G(x)$-stable component, then $\mathcal{O}_x\cap\mathfrak{n}$ contains a smooth orbital variety.
\end{itemize}
\end{proposition}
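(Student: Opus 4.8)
The plan is to realise the two sides of the correspondence as two parametrisations of the irreducible components of a single auxiliary variety, playing the smooth surjections $G\to\mathcal{B}=G/B$ and $G\to\mathcal{O}_x=G/Z_G(x)$ against one another. Concretely, I would introduce
\[\widetilde{\mathcal{O}}=\{(\mathfrak{b}',y)\in\mathcal{B}\times\mathcal{O}_x:y\in\mathfrak{n}'\},\]
where $\mathfrak{n}'$ denotes the nilpotent radical of the Borel subalgebra $\mathfrak{b}'$. The first projection $\pi_1\colon\widetilde{\mathcal{O}}\to\mathcal{B}$ has fibre $\mathcal{O}_x\cap\mathfrak{n}$ over the base point $\mathfrak{b}$; since $B$ normalises $\mathfrak{n}$ and preserves $\mathcal{O}_x$, this yields a $G$-equivariant identification $\widetilde{\mathcal{O}}\cong G\times_B(\mathcal{O}_x\cap\mathfrak{n})$. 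For the second projection $\pi_2\colon\widetilde{\mathcal{O}}\to\mathcal{O}_x$ the key observation is that a nilpotent element of a Borel subalgebra automatically lies in its nilradical (it acts by $0$ on every subquotient of a full flag preserved by the Borel); hence the fibre of $\pi_2$ over $y$ is exactly the Springer fibre $\mathcal{B}_y$, and conjugating $y$ to $x$ gives $\widetilde{\mathcal{O}}\cong G\times_{Z_G(x)}\mathcal{B}_x$.

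Next I would describe the irreducible components of $\widetilde{\mathcal{O}}$ in each of the two models. On the $\pi_1$-side: $\mathcal{B}$ is smooth and irreducible, $\pi_1$ is a locally trivial fibration, and $B$ is connected, so each orbital variety $X\subseteq\mathcal{O}_x\cap\mathfrak{n}$ is $B$-stable and the components of $\widetilde{\mathcal{O}}$ are precisely the $G\times_B X$; thus $X\mapsto G\times_B X$ is a bijection from orbital varieties, and $G\times_B X$ is smooth if and only if $X$ is. On the $\pi_2$-side: since $G$ is irreducible, every component of $G\times_{Z_G(x)}\mathcal{B}_x$ is the image of $G\times C$ for some component $C$ of $\mathcal{B}_x$, and $G\times C$ and $G\times C'$ have the same image exactly when $C$ and $C'$ lie in the same $A_x$-orbit; so the components of $\widetilde{\mathcal{O}}$ also correspond bijectively to the $A_x$-orbits $\Omega$ of components of $\mathcal{B}_x$, the one attached to $\Omega$ being $G\times_{Z_G(x)}\bigl(\bigcup_{C\in\Omega}C\bigr)$. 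Composing the two bijections through the common set of components of $\widetilde{\mathcal{O}}$ defines the map $\Xi$.

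It remains to verify (a) and (b). The quotient map $G\times\mathcal{B}_x\to\widetilde{\mathcal{O}}=G\times_{Z_G(x)}\mathcal{B}_x$ is a principal $Z_G(x)$-bundle, so for the $Z_G(x)$-stable closed subset $\bigcup_{C\in\Omega}C\subseteq\mathcal{B}_x$, the component $G\times_{Z_G(x)}\bigl(\bigcup_{C\in\Omega}C\bigr)$ of $\widetilde{\mathcal{O}}$ is smooth if and only if $\bigcup_{C\in\Omega}C$ is smooth. Combined with the $\pi_1$-side criterion, $X$ is smooth if and only if $\bigcup_{C\in\Xi(X)}C$ is smooth; and a finite union of irreducible varieties is smooth precisely when each of them is smooth and no two of them meet (since a smooth variety is locally irreducible). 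This gives (a), and (b) is then immediate: a smooth orbital variety forces the components in its image to be smooth, so $\mathcal{B}_x$ has a smooth component; conversely a $Z_G(x)$-stable smooth component $C$ forms a singleton $A_x$-orbit, which trivially satisfies the disjointness clause, so $\Xi^{-1}(\{C\})$ is smooth by (a).

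Apart from the routine bundle-theoretic bookkeeping, the step that needs genuine care is the interaction with the disconnectedness of $Z_G(x)$: one must check that a single component of $\widetilde{\mathcal{O}}$ corresponds to an entire $A_x$-orbit of components of $\mathcal{B}_x$ --- rather than to one component --- and that smoothness passes correctly along the quotient $G\times\mathcal{B}_x\to G\times_{Z_G(x)}\mathcal{B}_x$; once this is in place, the ``pairwise disjoint'' clause of (a) is just the local-irreducibility remark.
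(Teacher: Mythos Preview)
Your argument is correct and follows exactly the approach the paper indicates: the paper does not give a self-contained proof of this proposition but cites \cite{Spaltenstein.Topology, FM-2013} after the sentence ``Relying on the two smooth maps $G\to\mathcal{B}=G/B$ and $G\to\mathcal{O}_x=G/Z_G(x)$'', which is precisely the double-fibration picture $G\times_B(\mathcal{O}_x\cap\mathfrak{n})\cong\widetilde{\mathcal{O}}\cong G\times_{Z_G(x)}\mathcal{B}_x$ that you spell out. Your handling of the disconnectedness of $Z_G(x)$ and the translation of smoothness into ``each component smooth and pairwise disjoint'' via local irreducibility is the standard one and matches the treatment in \cite{FM-2013}.
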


By Proposition \ref{P1}\,{\rm (b)} a positive answer to (\ref{2}) yields a positive answer to (\ref{1}).

\begin{remark}
The equivalence in Proposition \ref{P1}\,{\rm (a)} cannot be
improved: in Section \ref{section-6} we point out an example of a
singular orbital variety $X$ in $\mathfrak{sp}_6(\mathbb{K})$ such that the
corresponding Springer fiber components $C\in\Xi(X)$ are smooth (but
not pairwise disjoint).
\end{remark}

\subsection{Main result in classical cases}

In this paper we give an affirmative answer to (\ref{1}) and (\ref{2}) in the classical cases.

\begin{theorem}
\label{T1}
Let $G$ be one of the classical groups $\mathrm{SL}_n(\mathbb{K})$, $\mathrm{SO}_n(\mathbb{K})$, $\mathrm{Sp}_{2n}(\mathbb{K})$.
Then:
\begin{itemize}
\item[\rm (a)] Every nilpotent orbit $\mathcal{O}\subset\mathfrak{g}$ contains at least one smooth orbital variety.
\item[\rm (b)] In fact, for every nilpotent element $x\in\mathfrak{g}$, the Springer fiber $\mathcal{B}_x$ has at least one smooth, $Z_G(x)$-stable irreducible component.
\end{itemize}
\end{theorem}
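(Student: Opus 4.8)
By Proposition~\ref{P1}\,(b), part~(a) is a consequence of part~(b), so the plan is to prove~(b). The case $G=\mathrm{SL}_n(\mathbb{K})$ is classical and recalled in Section~\ref{section-SLn}: there $A_x$ is trivial and every nilpotent orbit is a Richardson orbit, so the nilradical $\mathfrak{n}_P$ of a suitable parabolic subgroup $P\supseteq B$, a vector space, meets $\mathcal{O}_x$ in a smooth orbital variety, and Proposition~\ref{P1}\,(a) then yields a smooth (automatically $Z_G(x)$-stable) component of $\mathcal{B}_x$. The heart of the matter is the case $G=\mathrm{SO}_n(\mathbb{K})$ or $G=\mathrm{Sp}_{2n}(\mathbb{K})$, which I would treat in the geometric model of isotropic flags. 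First I would fix a nondegenerate symmetric (resp.\ alternating) form on $V=\mathbb{K}^n$ (resp.\ $V=\mathbb{K}^{2n}$) preserved by $G$, realize a nilpotent element $x$ in this model, let $\lambda$ denote its Jordan type, and identify $\mathcal{B}_x$ with the variety of complete isotropic flags $0=V_0\subset V_1\subset\cdots$ satisfying $xV_i\subseteq V_{i-1}$, keeping track in type $\mathrm{D}$ of the two families of maximal isotropic subspaces and of the ``very even'' phenomenon. By van Leeuwen's parametrization, the irreducible components of $\mathcal{B}_x$ are indexed by the standard domino tableaux of shape $\lambda$ (with the appropriate type and sign conventions); I write $\mathcal{B}_x^T$ for the component attached to such a tableau $T$, namely the closure of the locus of flags for which the Jordan type of $x|_{V_i}$ is the one read off from the subtableau of dominoes with entries $\le i$. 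I would also recall, following Spaltenstein and van Leeuwen, the explicit description of the action of $A_x=Z_G(x)/Z_G(x)^0$ on $\{\mathcal{B}_x^T\}$ by combinatorial moves on domino tableaux (flips along open cycles).

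The main step is to single out, for each admissible $\lambda$, a distinguished standard domino tableau $T_0=T_0(\lambda)$ --- the domino analogue of the column-superstandard Young tableau that underlies Richardson's construction in type~$\mathrm{A}$ --- and to prove that $\mathcal{B}_x^{T_0}$ is smooth. I would construct $T_0$ together with the geometry, by induction on $\dim V$: one selects a distinguished step $V_k$ of the flag, dictated by the canonical $x$-stable filtration of $V$ by the subspaces $\ker x^j$ and $\operatorname{im} x^j$, so that the assignment $V_\bullet\mapsto V_k$ exhibits $\mathcal{B}_x^{T_0}$ as a fibration over a smooth isotropic (or orthogonal) Grassmannian, or a projective space, whose fiber is a component of the same distinguished type for a nilpotent element of strictly smaller Jordan type in a smaller classical Lie algebra; the dominoes laid down at this step pin down the largest entries of $T_0$, and the induction pins down the rest. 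Granting that this fibration is Zariski-locally trivial, smoothness of its base together with smoothness of its fiber (by induction) yields smoothness of $\mathcal{B}_x^{T_0}$, the base of the induction being clear. I expect this to be the principal obstacle. The delicate points are: verifying that $V_\bullet\mapsto V_k$ is everywhere defined on $\mathcal{B}_x^{T_0}$ and that the total space has the expected dimension $\dim\mathcal{B}_x$ (so that one genuinely describes the whole component and not merely a dense open subset of it), which forces a careful dimension count matched against the column-by-column domino combinatorics; and carrying the type-$\mathrm{D}$ subtleties (two flag families, very even partitions) through the induction.

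It then remains to check that $\mathcal{B}_x^{T_0}$ is $Z_G(x)$-stable, i.e.\ that $T_0$ is a fixed point of the $A_x$-action recalled above. Here the rigidity of the canonical filling pays off: the combinatorial $A_x$-moves act only on dominoes lying in open cycles, and $T_0$ leaves no room for such a move, so it is fixed; in the very even type-$\mathrm{D}$ case one checks in addition that the relevant component is already stable under $Z_{\mathrm{SO}_n}(x)$, the interchange of the two flag families being induced only by an element of $\mathrm{O}_n\setminus\mathrm{SO}_n$. Combining the last two steps produces a smooth, $Z_G(x)$-stable irreducible component of $\mathcal{B}_x$, which is~(b); Proposition~\ref{P1}\,(b) then yields~(a). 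Compared with the smoothness argument, this last $A_x$-invariance step should amount to a short combinatorial verification.
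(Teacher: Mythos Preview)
Your plan is viable and shares its skeleton with the paper's proof: both single out a distinguished domino tableau built column-pair by column-pair, both establish smoothness of the corresponding component by an inductive fiber-bundle argument, and both invoke van Leeuwen for the $Z_G(x)$-stability. The genuine difference lies in \emph{where} the inductive fibration is carried out. You propose to run it directly inside $\mathrm{Fl}_x(V,\omega)$, fibering the distinguished isotropic-flag component over an isotropic (or orthogonal) Grassmannian with fiber a distinguished component for a smaller classical group. The paper instead runs the fibration entirely in type~A: it shows (Proposition~\ref{P12}) that the type~A component $\overline{\mathrm{Fl}_{x,d_x^\omega}(V)}$ is a locally trivial bundle over an ordinary Grassmannian with fiber a product of full flag varieties and a smaller type~A component, hence smooth by induction; then it observes that $\overline{\mathrm{Fl}_{x,d_x^\omega}(V)}$ is $\sigma$-stable and applies the general fact (Proposition~\ref{P3}) that fixed loci of finite groups acting on smooth varieties are smooth, obtaining smoothness of $\overline{\mathrm{Fl}_{x,d_x^\omega}(V,\omega)}$ for free. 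The $Z_G(x)$-stability is then deduced not from the open-cycle combinatorics you suggest, but from the coarser observation that $d_x^\omega$ consists of a single cluster in van Leeuwen's sense, so $\overline{\mathrm{Fl}_{x,d_x^\omega}(V,\omega)}$ has exactly one component (two in even orthogonal type, swapped by~$\iota$); see Proposition~\ref{P11} and Proposition~\ref{P10}.

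What each approach buys: the paper's detour through type~A avoids the isotropy constraints in the fibration step entirely---the base is an honest Grassmannian, the pieces $\mathrm{Fl}_{x,d}(V)$ are automatically irreducible (Proposition~\ref{P6}), and the passage to fixed points is a one-line black box. Your direct approach keeps everything inside the classical group at hand, which is conceptually more transparent, but you would have to verify local triviality and identify the fiber over a base that carries the form $\omega$; the pieces $\mathrm{Fl}_{x,d}(V,\omega)$ are in general only equidimensional, not irreducible, so the inductive identification of the fiber with a single distinguished component for a smaller $x$ requires the same care you flag as ``the principal obstacle.'' Both routes lead to the theorem; the paper's is shorter.
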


The proof is given in Section \ref{section-5.3}.

\subsection{Partial results in exceptional cases}

We complete our main result by partially answering questions
(\ref{1}) and (\ref{2}) in exceptional cases:

\begin{proposition}
\label{P2} Let $G$ be a simple algebraic group and let
$\mathcal{O}\subset\mathfrak{g}$ be a nilpotent orbit. Assume that
one of the following conditions occurs.
\begin{itemize}
\item[\rm (a)] $G$ is of type $\mathrm{G}_2$ and $\mathcal{O}$ is not $A_1$;
\item[\rm (b)] $G$ is of type $\mathrm{F}_4$ and $\mathcal{O}$ is not $A_1$, $\widetilde{A}_1$, $A_1+\widetilde{A}_1$, $A_2+\widetilde{A}_1$, $\widetilde{A}_2+A_1$ (i.e., not rigid);
\item[\rm (c)] $G$ is of type $\mathrm{E}_6$ and $\mathcal{O}$ is not $A_1$, $3A_1$, $2A_2+A_1$ (i.e., not rigid);
\item[\rm (d)] $G$ is of type $\mathrm{E}_7$ and $\mathcal{O}$ is not $A_1$, $2A_1$, $(3A_1)'$, $4A_1$, $A_2+2A_1$, $A_1+2A_2$, $(A_1+A_3)'$ (i.e., not rigid) and neither $A_2+A_1$, $A_3+2A_1$, $A_5+A_1$ (so that $\mathcal{O}$ is induced by an orbit of a Levi subgroup of classical type
or by the trivial orbit of a Levi subgroup of type $\mathrm{E}_6$);
\item[\rm (e)] $G$ is of type $\mathrm{E}_8$ and $\mathcal{O}$ is not $A_1$, $2A_1$, $3A_1$, $4A_1$, $A_2+A_1$, $A_2+2A_1$, $A_2+3A_1$, $2A_2+A_1$, $A_3+A_1$, $2A_2+2A_1$, $A_3+2A_1$, $D_4(a_1)+A_1$, $A_3+A_2+A_1$, $2A_3$, $D_5(a_1)+A_2$, $A_5+A_1$, $A_4+A_3$ (i.e., not rigid) and neither $A_3$, $D_4+A_1$, $A_4+A_1$, $D_5(a_1)$, $D_5(a_1)+A_1$, $E_6(a_3)+A_1$, $E_7(a_5)$, $D_5+A_1$, $E_6+A_1$ (so that $\mathcal{O}$ is induced by an orbit of a Levi subgroup of classical type
or by the trivial orbit of a Levi subgroup of type $\mathrm{E}_6$ or $\mathrm{E}_7$).
\end{itemize}
Then the nilpotent orbit $\mathcal{O}$ contains at least one smooth orbital variety.

If $G$ is of type $\mathrm{G}_2$ and $\mathcal{O}=\mathcal{O}_x$ is
the nilpotent orbit $A_1$, then $\mathcal{O}_x\cap\mathfrak{n}$ and
$\mathcal{B}_x$ are both irreducible and singular, hence there is no
smooth orbital variety in $\mathcal{O}_x$ and no smooth component in
$\mathcal{B}_x$.
\end{proposition}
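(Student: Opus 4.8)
\emph{Proof idea.} The plan is to combine Theorem~\ref{T1} with an induction procedure for orbital varieties, parallel to the Lusztig--Spaltenstein induction of nilpotent orbits, and to treat $\mathrm{G}_2$ — which is small — separately. First I would prove the following \emph{induction lemma}. Fix a standard parabolic subalgebra $\mathfrak{p}=\mathfrak{l}\oplus\mathfrak{m}\supseteq\mathfrak{b}$ with Levi factor $\mathfrak{l}$ and nilradical $\mathfrak{m}$, so that $\mathfrak{n}=\mathfrak{n}_{\mathfrak{l}}\oplus\mathfrak{m}$, where $\mathfrak{n}_{\mathfrak{l}}:=\mathfrak{n}\cap\mathfrak{l}$ is the nilradical of the Borel $\mathfrak{b}_{\mathfrak{l}}:=\mathfrak{b}\cap\mathfrak{l}$ of $\mathfrak{l}$. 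Let $\mathcal{O}'\subseteq\mathfrak{l}$ be a nilpotent orbit with orbital variety $X'\subseteq\mathcal{O}'\cap\mathfrak{n}_{\mathfrak{l}}$, and set $\mathcal{O}:=\mathrm{Ind}_{\mathfrak{l}}^{\mathfrak{g}}(\mathcal{O}')$. Then $X:=(\overline{X'}+\mathfrak{m})\cap\mathcal{O}$ is an orbital variety of $\mathcal{O}\cap\mathfrak{n}$, and $X$ is smooth as soon as $X'$ is. For the first claim: $\overline{X'}+\mathfrak{m}$ is a closed irreducible subset of $\mathfrak{n}$ contained in $\overline{\mathcal{O}}$ — because $G\cdot(\overline{\mathcal{O}'}+\mathfrak{m})=\overline{\mathcal{O}}$ — of dimension $\tfrac12\dim\mathcal{O}'+\dim\mathfrak{m}=\tfrac12\dim\mathcal{O}$ (using $\dim\mathcal{O}=\dim\mathcal{O}'+2\dim\mathfrak{m}$); since $\overline{\mathcal{O}}\cap\mathfrak{n}$ is equidimensional of this dimension, $\overline{X'}+\mathfrak{m}$ is one of its irreducible components, i.e.\ the closure of an orbital variety $X=(\overline{X'}+\mathfrak{m})\cap\mathcal{O}$, open in $\overline{X'}+\mathfrak{m}$. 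For the second claim, the splitting $\mathfrak{n}=\mathfrak{n}_{\mathfrak{l}}\oplus\mathfrak{m}$ identifies $\overline{X'}+\mathfrak{m}$ with $\overline{X'}\times\mathfrak{m}$, whose singular locus is $\mathrm{Sing}(\overline{X'})\times\mathfrak{m}$; since $X'$ is open in $\overline{X'}$, if $X'$ is smooth then $\mathrm{Sing}(\overline{X'})\subseteq\overline{X'}\setminus X'\subseteq\overline{\mathcal{O}'}\setminus\mathcal{O}'$, so it suffices to show that $X$ meets $(\overline{\mathcal{O}'}\setminus\mathcal{O}')+\mathfrak{m}$ trivially, i.e.\ that $\mathcal{O}\cap(\overline{\mathcal{O}'}+\mathfrak{m})\subseteq\mathcal{O}'+\mathfrak{m}$. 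This I would deduce from the monotonicity of induction: for any nilpotent orbit $\mathcal{O}''\subsetneq\mathcal{O}'$ of $\mathfrak{l}$ one has $\mathcal{O}''+\mathfrak{m}\subseteq\overline{\mathrm{Ind}_{\mathfrak{l}}^{\mathfrak{g}}(\mathcal{O}'')}$, while $\mathrm{Ind}_{\mathfrak{l}}^{\mathfrak{g}}(\mathcal{O}'')$ lies strictly below $\mathcal{O}$ in the closure order (induction is monotone and strictly increases dimension), whence $\mathcal{O}\cap(\mathcal{O}''+\mathfrak{m})=\emptyset$.

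Since $\mathrm{Ind}_{\mathfrak{l}}^{\mathfrak{g}}(\mathcal{O}')$ depends only on the $G$-conjugacy class of $(\mathfrak{l},\mathcal{O}')$, the lemma applies to any Levi subgroup of $G$, and I would then invoke it in two ways. If $\mathfrak{l}$ is of classical type, then $[\mathfrak{l},\mathfrak{l}]$ is a product of simple Lie algebras of types $\mathrm{A}$, $\mathrm{B}$, $\mathrm{C}$, $\mathrm{D}$; the orbital varieties of $\mathfrak{l}$ coincide with those of $[\mathfrak{l},\mathfrak{l}]$ and are products of orbital varieties of the simple factors, so Theorem~\ref{T1}(a) provides a smooth orbital variety in every nilpotent orbit of $\mathfrak{l}$, and the lemma then yields one in every nilpotent orbit of $\mathfrak{g}$ induced from a classical Levi. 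On the other hand, the zero orbit of any Levi has the smooth orbital variety $\{0\}$, so the lemma applied with $\mathcal{O}'=\{0\}$ (for which $\overline{X}=\mathfrak{m}$, recovering Richardson's construction) shows that every Richardson orbit — in particular every orbit induced by the trivial orbit of a Levi of type $\mathrm{E}_6$ or $\mathrm{E}_7$ — contains a smooth orbital variety. By the hypotheses of (b)--(e), together with the parenthetical statements therein — which record, from the classification of induced nilpotent orbits in the exceptional Lie algebras, that each $\mathcal{O}$ under consideration is of one of these two forms — every such $\mathcal{O}$ contains a smooth orbital variety.

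Type $\mathrm{G}_2$ I would treat by hand. Both Levi subgroups of semisimple rank one are of type $\mathrm{A}_1$, so the two Richardson orbits — the regular orbit $G_2$ and the subregular orbit $G_2(a_1)$ — contain a smooth orbital variety by the previous paragraph, and the zero orbit is trivial. For the remaining orbit $\widetilde{A}_1$, which is rigid (so the induction lemma does not apply), I would argue directly: taking $e_\alpha$ a short simple root vector, whose $G$-orbit is $\widetilde{A}_1$, one computes $\dim B\cdot e_\alpha=\tfrac12\dim\widetilde A_1$, so $\overline{B\cdot e_\alpha}$ is the closure of an orbital variety, and a short explicit check in $\mathfrak{g}_2$ then shows this orbital variety equals the single $B$-orbit $B\cdot e_\alpha$, which is smooth because it is homogeneous. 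Finally, for $\mathcal{O}_x=A_1$, the minimal orbit: by \cite[\S II.11.4]{Spaltenstein} the Springer fiber $\mathcal{B}_x$ is irreducible and singular; Proposition~\ref{P1} then shows that $\mathcal{O}_x\cap\mathfrak{n}$ consists of a single orbital variety $X$ — hence is irreducible — and, since $\Xi(X)=\{\mathcal{B}_x\}$ is a single component, Proposition~\ref{P1}(a) shows that $X$ is smooth if and only if $\mathcal{B}_x$ is; as $\mathcal{B}_x$ is not smooth, neither is $X$, which is the last assertion.

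The conceptual heart is the induction lemma, and within it the inclusion $\mathcal{O}\cap(\overline{\mathcal{O}'}+\mathfrak{m})\subseteq\mathcal{O}'+\mathfrak{m}$, which confines the closure of the new orbital variety so that its intersection with the top orbit lies inside the product region $X'+\mathfrak{m}\cong X'\times\mathfrak{m}$, where smoothness is transparent. The remaining work is a finite verification: reading off from the classification tables that the non-excluded orbits of $\mathrm{E}_7$ and $\mathrm{E}_8$ are genuinely induced from classical Levi factors (or from the trivial orbit of an $\mathrm{E}_6$/$\mathrm{E}_7$ Levi) rather than only from exceptional data, and the small explicit computation settling $\widetilde{A}_1$ and $A_1$ in $\mathrm{G}_2$.
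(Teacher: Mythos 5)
Your induction lemma is essentially the paper's Proposition \ref{P5} (the paper takes $X=\mathcal{O}\cap(X_L+\mathfrak{n}_P)$ and proves closedness in $\mathcal{O}$ via the same boundary-confinement $\mathcal{O}\cap(\mathcal{O}''+\mathfrak{m})=\emptyset$, argued by the dimension formula and Proposition \ref{P4}\,(a)), and your deduction of parts (b)--(e) from it together with Theorem \ref{T1} and the de Graaf--Elashvili tables is exactly the paper's route. The problem is in type $\mathrm{G}_2$, for the rigid orbit $\widetilde{A}_1$. Your claim that the orbital variety $\overline{B\cdot e_\alpha}\cap\widetilde{\mathcal{O}}$ \emph{equals} the single orbit $B\cdot e_\alpha$ is false. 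Every element of $B\cdot e_\alpha$ has a nonzero component on $\mathfrak{g}_\alpha$ (a unipotent element sends $e_\alpha$ to $e_\alpha$ plus terms in higher root spaces, and the torus only rescales), whereas the short root vector $e_{\alpha+\beta}$ lies in $\widetilde{\mathcal{O}}$ and in $\overline{B\cdot e_\alpha}$: indeed $\overline{B\cdot e_\alpha}$ coincides with the $4$-dimensional variety $\mathcal{V}=\{x\in\mathfrak{n}:x_2=3x_4^2+4x_3x_5-4x_1x_6=0\}$ computed in the paper, which contains $e_{\alpha+\beta}$, while $e_{\alpha+\beta}\notin B\cdot e_\alpha$. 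So the orbital variety has a dense $B$-orbit but is not homogeneous, and "smooth because homogeneous" does not apply; in general a dense $B$-orbit does not force smoothness (this is precisely the theme of \cite{FM-2013}). The paper closes this gap by exhibiting the explicit equations of $\mathcal{V}$ and checking, via the Jacobian criterion, that its only singular point is $0$, which is excluded from $\widetilde{\mathcal{V}}=\mathcal{V}\cap\widetilde{\mathcal{O}}$. Your argument for $\widetilde{A}_1$ therefore needs to be replaced by such an explicit smoothness verification.

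Two smaller points. For the minimal orbit $A_1$ of $\mathrm{G}_2$ you invoke \cite[\S II.11.4]{Spaltenstein} for the irreducibility and singularity of $\mathcal{B}_x$ and then transfer this to $\mathcal{O}_x\cap\mathfrak{n}$ via Proposition \ref{P1}; the logic of the transfer is fine (one must also note that $Z_G(x)$ is connected here), but Spaltenstein states this fact without proof, which is why the paper proves it directly by computing $\mathcal{O}\cap\mathfrak{n}=\{x\neq0:x_1=x_2x_4-x_3^2=x_3x_5+x_4^2=x_2x_5+x_3x_4=0\}$ and locating singular points; as written your treatment assumes rather than establishes the last assertion of the proposition. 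Finally, in your lemma the appeal to equidimensionality of $\overline{\mathcal{O}}\cap\mathfrak{n}$ is unnecessary (and not obviously true); what you actually use is only that a closed irreducible subset of maximal dimension is a component, which is enough.
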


The proof is done in Section \ref{section-2}.

\subsection{Organization of the paper}

The proof of Proposition \ref{P2} is based on the following inductive
principle:

\smallskip

{\it whenever $\mathcal{O}_L$ is a nilpotent orbit of a Levi subgroup $L\subset G$ which contains a smooth orbital variety, the induced nilpotent orbit $\mathrm{Ind}_L^G(\mathcal{O}_L)$ contains a smooth orbital variety}

\smallskip
\noindent
(shown in Section \ref{section-2}) and on the description of induced
nilpotent orbits in exceptional simple Lie algebras given in
\cite{DeGraaf-Elashvili}.
In particular, the nilpotent orbits listed in Proposition \ref{P2}\,{\rm (b)}--{\rm (e)} being induced from nilpotent orbits of classical type,
parts {\rm (b)}--{\rm (e)} of Proposition \ref{P2} are implied by Theorem \ref{T1} and the above inductive principle. The claims in Proposition \ref{P2} which concern type $\mathrm{G}_2$ are shown in Section \ref{section-2.2}.
Before that, in Section \ref{section-2.1} we review the induction procedure for nilpotent orbits
and in Section \ref{section-3.2} we define a similar induction procedure for orbital varieties.
The above inductive principle is obtained as a consequence of this construction
(which can also be of independant interest).

Our approach to classical cases is different. In the situation of
Theorem \ref{T1}, the nilpotent elements $x\in\mathfrak{g}$ are
nilpotent endomorphisms of a standard representation $V$ of $G$, and
the Springer fibers $\mathcal{B}_x$ can be viewed as varieties of
$x$-stable complete flags (i.e., maximal chains of $x$-stable
subspaces of $V$). For $G=\mathrm{SL}_n(\mathbb{K})$ (type A), this
description of $\mathcal{B}_x$ is explained in Section
\ref{section-SLn}. In this case, Theorem \ref{T1} is a well-known
fact, whose proof is also recalled in Section \ref{section-SLn}.

For $G=\mathrm{SO}_n(\mathbb{K})$ or $\mathrm{Sp}_{2n}(\mathbb{K})$
(types B, C, D), the nilpotent endomorphism $x$ is skew-adjoint with
respect to a given orthogonal or symplectic form $\omega$ on $V$,
and $\mathcal{B}_x$ can be viewed as a variety of complete flags
which are $x$-stable and isotropic, that is, fixed by the involution
$\sigma$ which maps a complete flag $(V_0,\ldots,V_n)$ to the flag
$(V_n^\perp,\ldots,V_0^\perp)$ of orthogonal subspaces with respect
to $\omega$. In particular, $\mathcal{B}_x$ corresponds to the
subvariety of $\sigma$-fixed points of the type A Springer fiber
attached to $x$ and each irreducible component of $\mathcal{B}_x$
lies in the fixed-point set of a component of the type A
Springer fiber. These facts are explained in Section
\ref{section-4}. In Section \ref{section-4.4}, we also recall the
classification of the components of $\mathcal{B}_x$ in terms of
admissible domino tableaux, proved in \cite{van-Leeuwen}. The
results in \cite{van-Leeuwen} also include a combinatorial
description of the action of the stabilizer $Z_G(x)$ on the set of
components of $\mathcal{B}_x$, which we need for the proof of
Theorem \ref{T1}.

The proof of Theorem \ref{T1} is given in Section \ref{section-5}.
It is based on the construction of a suitable family of smooth
components of type A Springer fibers, parameterized by well-chosen
domino tableaux. The $\sigma$-fixed point set of each of these type
A components contains a single Springer fiber component for the
considered classical type (B, C, or D), whose smoothness is shown by
invoking the following general fact:

\begin{proposition}[\cite{Fogarty, Iversen}]
\label{P3}
Let $X$ be an algebraic variety over $\mathbb{K}$. Let $H$ be a
linearly reductive group with algebraic action on $X$ (for instance
a finite group). If the variety $X$ is smooth, then the fixed point
set $X^H:=\{x\in X:\forall h\in H,\ h(x)=x\}$ is also smooth.
\end{proposition}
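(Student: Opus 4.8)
The statement is local in nature and concerns smoothness, so the natural plan is to verify it through the infinitesimal lifting criterion, using linear reductivity of $H$ to upgrade lifts to $H$-equivariant ones. Since $X^{H}$ is a closed subscheme of $X$, it is of finite type over $\mathbb{K}$; hence it suffices to show that for every finite-dimensional local $\mathbb{K}$-algebra $A$, every square-zero ideal $J\subset A$ with quotient $A_{0}=A/J$, and every $H$-equivariant morphism $\bar\varphi\colon\operatorname{Spec}A_{0}\to X$ (that is, every $A_{0}$-point of $X^{H}$), there is an $H$-equivariant lift $\varphi\colon\operatorname{Spec}A\to X$. When $H$ is finite — the only case actually needed in this paper, where $H$ is the cyclic group generated by the involution $\sigma$ — ``$H$-equivariant'' simply means $h\circ\varphi=\varphi$ for all $h\in H$, viewing each $h$ as an automorphism of $X$; in general one works with the fixed-point functor $T\mapsto X(T)^{H(T)}$, which is represented by the closed subvariety $X^{H}$.

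The lift is produced as follows. Since $X$ is smooth, $\bar\varphi$ admits \emph{some} lift $\varphi_{0}\colon\operatorname{Spec}A\to X$, a priori not $H$-equivariant. Standard deformation theory identifies the set $L$ of all lifts of $\bar\varphi$ with a torsor under the $\mathbb{K}$-vector space $M:=\operatorname{Hom}_{A_{0}}(\bar\varphi^{*}\Omega_{X/\mathbb{K}},J)$; as $\bar\varphi$ is $H$-equivariant, $H$ acts linearly and algebraically on $M$ and compatibly on $L$, the $H$-fixed points of $L$ being exactly the $H$-equivariant lifts. Writing the $M$-action additively, $h\mapsto h\cdot\varphi_{0}-\varphi_{0}$ defines an algebraic $1$-cocycle $c\colon H\to M$; because $H$ is linearly reductive one has $H^{1}_{\mathrm{alg}}(H,M)=0$, so $c(h)=h\cdot m-m$ for some $m\in M$, and then $\varphi:=\varphi_{0}-m$ is the desired $H$-equivariant lift. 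This yields smoothness of $X^{H}$ (and in particular shows its scheme structure is reduced, consistent with the set-theoretic description in the statement). The one step that genuinely uses the hypothesis, and the place to be careful, is the vanishing $H^{1}_{\mathrm{alg}}(H,M)=0$: for finite $H$ it is the familiar averaging argument, $m=-\tfrac{1}{|H|}\sum_{h\in H}h^{-1}\cdot c(h)$, legitimate since $\operatorname{char}\mathbb{K}=0$; for a general linearly reductive $H$ one replaces averaging by the Reynolds operator coming from complete reducibility of rational $H$-modules. Everything else — representability of $X^{H}$, the finite-type property, the torsor description of the set of lifts — is routine.

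As an alternative one can argue purely locally at a point $p\in X^{H}$: the group $H$ acts on $\mathcal{O}_{X,p}$ and on $\mathfrak{m}_{p}/\mathfrak{m}_{p}^{2}$, and, using linear reductivity to choose an $H$-stable complement of $\mathfrak{m}_{p}^{2}$ in $\mathfrak{m}_{p}$ and then correcting order by order, one obtains an $H$-equivariant isomorphism $\widehat{\mathcal{O}}_{X,p}\cong\mathbb{K}[[t_{1},\dots,t_{d}]]$ with $H$ acting by linear substitutions. Then $X^{H}$ is cut out formally near $p$ by a set of linear forms, so $\widehat{\mathcal{O}}_{X^{H},p}$ is a regular local ring, whence $X^{H}$ is smooth at $p$. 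In this second route the delicate point is precisely the equivariant formal linearization, which is the local counterpart of the cohomology vanishing used in the first route.
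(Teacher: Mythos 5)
The paper offers no proof of Proposition \ref{P3}; it is quoted directly from \cite{Fogarty, Iversen}, so there is nothing internal to compare your argument against. What you have written is essentially the standard proof from that literature (your first route is Fogarty's deformation-theoretic argument, your second is the equivariant formal linearization one finds e.g.\ in treatments of finite group actions), and it is correct, in particular in the only case the paper actually needs, namely $H=\{\mathrm{id},\sigma\}$ acting on a projective variety. Two small caveats. First, the representability of the fixed-point functor by a closed subscheme is genuinely routine only for finite $H$ acting on a separated scheme (intersect the preimages of the diagonal under the maps $(\mathrm{id},h)\colon X\to X\times X$); for positive-dimensional linearly reductive $H$ this representability is itself the main content of \cite{Fogarty} and requires hypotheses on $X$, so it should not be waved through as "routine" in full generality. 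Second, your explicit primitive for the cocycle is off: with $c(h)=h\cdot\varphi_0-\varphi_0$ one has $c(gh)=g\cdot c(h)+c(g)$, and the correct choice is $m=-\tfrac{1}{|H|}\sum_{h\in H}c(h)$ (no $h^{-1}$ twist) — equivalently, and most transparently, the equivariant lift is the barycentre $\varphi=\tfrac{1}{|H|}\sum_{h\in H}h\cdot\varphi_0$ of the $H$-orbit of any lift in the affine space $L$. This is cosmetic and does not affect the validity of the argument; the substantive inputs (formal smoothness of $X$, the torsor structure on the set of lifts, and vanishing of $H^1$ for rational modules over a linearly reductive group in characteristic zero) are all correctly identified and correctly used.
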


Finally, in Section \ref{section-6}, we give an example of nilpotent
element $x\in\mathfrak{sp}_6(\mathbb{K})$ such that all irreducible
components of $\mathcal{B}_x$ are smooth, but some orbital varieties
of $\mathcal{O}_x$ are singular. This fact, which underlines the
subtle character of the correspondence between orbital varieties and
Springer fiber components described in Proposition \ref{P1}, is
caused by the nontriviality of the action of $A_x(=Z_G(x)/Z_G(x)^0$)
on the set of components of $\mathcal{B}_x$ and by the complexity of
the intersection pattern of these components.

\section{Induction procedure}

\label{section-2}

In this section, unless otherwise stated, $G$ is a connected
reductive group.

\subsection{Review on induction of nilpotent orbits}

\label{section-2.1}

Let $P\subset G$ be a parabolic subgroup, with unipotent radical
$U_P$ and a Levi factor $L$. Let $\mathfrak{p}$, $\mathfrak{n}_P$,
and $\mathfrak{l}$ be the Lie algebras of $P$, $U_P$, and $L$; thus
$\mathfrak{p}=\mathfrak{l}\oplus\mathfrak{n}_P$.

\begin{proposition}[\protect{\cite{Lusztig-Spaltenstein}}]
\label{P4} Let $\mathcal{O}_L\subset\mathfrak{l}$ be a nilpotent
($L$-)orbit. The set
$\mathcal{O}_L+\mathfrak{n}_P:=\{x+y:x\in\mathcal{O}_L,\
y\in\mathfrak{n}_P\}$ is an irreducible, locally closed subset of
$\mathfrak{g}$ consisting of nilpotent elements, so that there is a
unique nilpotent $G$-orbit
$\mathcal{O}=:\mathrm{Ind}_L^G(\mathcal{O}_L)$ such that
$\mathcal{O}\cap(\mathcal{O}_L+\mathfrak{n}_P)$ is open and dense in
$\mathcal{O}_L+\mathfrak{n}_P$. Moreover,
\begin{itemize}
\item[\rm (a)] Every nilpotent $G$-orbit
$\mathcal{O}'$ such that $\mathcal{O}'\cap (\mathcal{O}_L+\mathfrak{n}_P)\not=\emptyset$ satisfies $\mathcal{O}'\subset\overline{\mathrm{Ind}_L^G(\mathcal{O}_L)}$;
\item[\rm (b)] $\dim \mathrm{Ind}_L^G(\mathcal{O}_L)=\dim
\mathcal{O}_L+2\dim\mathfrak{n}_P$;
\item[\rm (c)] $\mathrm{Ind}_L^G(\mathcal{O}_L)\cap(\mathcal{O}_L+\mathfrak{n}_P)$
is $P$-stable and consists of a single $P$-orbit.
\end{itemize}
\end{proposition}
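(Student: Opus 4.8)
The plan is to prove the statement in three parts: first the elementary structural properties of $e:=\mathcal{O}_L+\mathfrak{n}_P$; then the construction of $\mathcal{O}=\mathrm{Ind}_L^G(\mathcal{O}_L)$ together with its characterization (hence uniqueness), part~(a), and the fact that $\mathcal{O}\cap e$ is open dense in $e$; and finally the deduction of part~(c) from the dimension formula~(b), which is the only step requiring substantial work. For the structural part: since $\mathfrak{p}=\mathfrak{l}\oplus\mathfrak{n}_P$ and $\mathcal{O}_L\subset\mathfrak{l}$, the addition map $(x,y)\mapsto x+y$ restricts a linear isomorphism $\mathfrak{l}\oplus\mathfrak{n}_P\xrightarrow{\sim}\mathfrak{p}$ to an isomorphism $\mathcal{O}_L\times\mathfrak{n}_P\xrightarrow{\sim}e$; hence $e$ is irreducible, locally closed in $\mathfrak{p}$ (thus in $\mathfrak{g}$), of dimension $\dim\mathcal{O}_L+\dim\mathfrak{n}_P$. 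Any $v=x+y\in e$ is nilpotent, because $\mathrm{ad}(v)$ preserves the $\mathfrak{n}_P$-adic filtration of $\mathfrak{g}$ and acts on the associated graded exactly as the nilpotent operator $\mathrm{ad}(x)$. And $e$ is $P$-stable: for $p=\ell u$ with $\ell\in L$, $u\in U_P$, one has $\mathrm{Ad}(u)(x+y)\in x+\mathfrak{n}_P$ (as $U_P$ preserves $\mathfrak{n}_P$ and acts trivially on $\mathfrak{p}/\mathfrak{n}_P$), and then $\mathrm{Ad}(\ell)(x+\mathfrak{n}_P)\subset\mathcal{O}_L+\mathfrak{n}_P=e$ (as $L$ preserves $\mathcal{O}_L$ and normalizes $\mathfrak{n}_P$).

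For the construction: the image $G\cdot e$ of the irreducible variety $G\times e$ under the adjoint action is an irreducible constructible subset of the nilpotent cone $\mathcal{N}$. Since $\mathcal{N}$ is a union of finitely many orbits, $\overline{G\cdot e}$ is an irreducible closed $G$-stable subset of $\mathcal{N}$, hence $\overline{G\cdot e}=\overline{\mathcal{O}}$ for a unique nilpotent orbit $\mathcal{O}=:\mathrm{Ind}_L^G(\mathcal{O}_L)$; moreover $\mathcal{O}\subset G\cdot e$, since the $G$-stable constructible set $G\cdot e$ contains a dense open subset of its closure, which meets the dense orbit $\mathcal{O}$. Part~(a) follows at once: any orbit meeting $e$ lies in $G\cdot e\subset\overline{\mathcal{O}}$. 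Now decompose $e$ into the finitely many locally closed pieces $\mathcal{O}'\cap e$ over the orbits $\mathcal{O}'$ meeting $e$; irreducibility of $e$ forces exactly one of them, say $\mathcal{O}_0\cap e$, to be dense in $e$, and restricting the dominant map $G\times e\to G\cdot e$ to the dense subset $G\times(\mathcal{O}_0\cap e)$ keeps it dominant, which forces $\mathcal{O}_0$ to be dense in $\overline{\mathcal{O}}$ and hence $\mathcal{O}_0=\mathcal{O}$. Thus $\mathcal{O}\cap e$ is dense in $e$; it is also open in $e$, because every orbit $\mathcal{O}'\ne\mathcal{O}$ meeting $e$ lies in $\overline{\mathcal{O}}$ with $\dim\mathcal{O}'<\dim\mathcal{O}$, so $e\setminus\mathcal{O}$ is the intersection of $e$ with the closed set $\{v\in\mathcal{N}:\dim G\cdot v<\dim\mathcal{O}\}$ (lower semicontinuity of orbit dimension). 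The same reasoning shows $\mathcal{O}$ is the only orbit meeting $e$ in a dense subset, which gives the asserted uniqueness.

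For part~(c) from part~(b): introduce the collapsing variety $\mathcal{Z}:=G\times^{P}e$, a fiber bundle over $G/P$ with fiber $e$, so $\mathcal{Z}$ is irreducible of dimension $\dim(G/P)+\dim e=\dim\mathcal{O}_L+2\dim\mathfrak{n}_P$, together with the $G$-equivariant morphism $\mu:\mathcal{Z}\to\mathcal{N}$, $[g,v]\mapsto\mathrm{Ad}(g)v$, of image $G\cdot e$, so $\dim(G\cdot e)=\dim\mathcal{O}$. For $v\in\mathcal{O}\cap e$ (nonempty, by the previous part) the projection $\mathcal{Z}\to G/P$ is injective on $\mu^{-1}(v)$ and identifies it with the subset $\{gP:\mathrm{Ad}(g)^{-1}v\in e\}$ of $G/P$, which contains the trivial coset; by $G$-equivariance all fibers of $\mu$ over $\mathcal{O}$ have dimension $\dim\mathcal{Z}-\dim\mathcal{O}$. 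Therefore part~(b), $\dim\mathcal{O}=\dim\mathcal{O}_L+2\dim\mathfrak{n}_P$, is equivalent to the finiteness of $\mu^{-1}(v)$. Granting this, $Z_G(v)$ acts by left translation on the finite set $\mu^{-1}(v)$, so the stabilizer $Z_G(v)\cap P=Z_P(v)$ of the trivial coset has finite index in $Z_G(v)$, whence $\dim Z_P(v)=\dim Z_G(v)=\dim G-\dim\mathcal{O}$ and
\[
\dim(P\cdot v)=\dim P-\dim Z_P(v)=\dim P-\dim G+\dim\mathcal{O}=\dim\mathcal{O}-\dim\mathfrak{n}_P=\dim e ,
\]
the last equality using part~(b) together with $\dim e=\dim\mathcal{O}_L+\dim\mathfrak{n}_P$. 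Since this holds for every $v\in\mathcal{O}\cap e$, each $P$-orbit in $\mathcal{O}\cap e$ is dense, hence open, in the irreducible variety $e$; two distinct such orbits would then meet, so $\mathcal{O}\cap e$ is a single $P$-orbit, which, being obviously $P$-stable, proves part~(c).

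The main obstacle is thus the finiteness of the generic fiber of the collapsing map $\mu$, equivalently part~(b); this is precisely the theorem of Lusztig--Spaltenstein. When $\mathcal{O}_L=\{0\}$ it is Richardson's dense-orbit theorem: here $\mathcal{Z}=G\times^{P}\mathfrak{n}_P=T^{*}(G/P)$, $\mu$ is its moment map, and $\dim\mathrm{Ind}_L^G(0)=\dim T^{*}(G/P)=2\dim\mathfrak{n}_P$ follows from the standard dimension bound on the image of a moment map. In general it rests on a transversality/dimension estimate showing that for a suitably generic $v=x+y\in e$ the centralizer $Z_G(v)$ is contained in $P$ and has the same dimension as $Z_L(x)$. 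I would invoke this result as known (citing \cite{Lusztig-Spaltenstein}), since reproving it is where the real difficulty lies.
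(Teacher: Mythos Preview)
The paper does not prove this proposition: it is stated as a result of Lusztig--Spaltenstein and simply cited to \cite{Lusztig-Spaltenstein}, with no argument given. There is therefore no proof in the paper to compare your attempt against.

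That said, your proposal is correct and well organized, and it goes further than the paper by actually supplying the elementary reductions. The structural claims about $e=\mathcal{O}_L+\mathfrak{n}_P$ (irreducibility and local closedness via the isomorphism $\mathcal{O}_L\times\mathfrak{n}_P\cong e$, nilpotence of its elements via the filtration argument, and $P$-stability) are all fine. Your construction of $\mathcal{O}$ as the unique dense orbit in $\overline{G\cdot e}$, together with part~(a) and the openness and density of $\mathcal{O}\cap e$ in $e$ via the finiteness of nilpotent orbits and lower semicontinuity of orbit dimension, is standard and correct. Your deduction of~(c) from~(b) through the collapsing map $\mu:G\times^{P}e\to\mathcal{N}$, the identification of $\mu^{-1}(v)$ with a subset of $G/P$, and the finite-index argument for $Z_P(v)\subset Z_G(v)$ is clean; the conclusion that all $P$-orbits in $\mathcal{O}\cap e$ are open and hence coincide is valid. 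You are right that part~(b) is where the substance lies and that this is exactly the content of the Lusztig--Spaltenstein theorem; citing it there is precisely what the paper does for the whole proposition.
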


A nilpotent $G$-orbit is said to be induced if is of the form
$\mathrm{Ind}_L^G(\mathcal{O}_L)$ for some proper Levi factor
$L\subset G$, and it is said to be rigid
otherwise. We refer to \cite{Collingwood-McGovern} and
\cite{DeGraaf-Elashvili} for an explicit description of
induced/rigid orbits in the classical and exceptional cases.

\subsection{Induction of orbital varieties}

\label{section-3.2}

As in Section \ref{section-1.2} we fix a Borel subgroup $B\subset G$
and let $\mathfrak{n}\subset\mathfrak{g}$ be the nilradical of its
Lie algebra. We take $P,L,U_P$ and
$\mathfrak{p},\mathfrak{l},\mathfrak{n}_P$ as in Section
\ref{section-2.1}. Up to replacing $P,L$ by conjugates we may assume
all these data compatible in the sense that
\begin{eqnarray*}
 & \mbox{$B\subset P$, so that $U_P\subset B$ and
$\mathfrak{n}_P\subset\mathfrak{n}$,} \\
 & \mbox{$B_L:=B\cap L$ is a Borel subgroup of $L$, so that $B=B_LU_P$
and $\mathfrak{n}=\mathfrak{n}_L\oplus\mathfrak{n}_P$,}
\end{eqnarray*}
where $\mathfrak{n}_L:=\mathfrak{n}\cap\mathfrak{l}$ is the
nilradical of the Lie algebra of $B_L$.

\begin{proposition}
\label{P5} Let $\mathcal{O}_L\subset\mathfrak{l}$ be a nilpotent
$L$-orbit and let $\mathcal{O}:=\mathrm{Ind}_L^G(\mathcal{O}_L)$ be
the corresponding induced $G$-orbit. Let $X_L$ be an orbital variety
of $\mathcal{O}_L$, i.e., an irreducible component of
$\mathcal{O}_L\cap\mathfrak{n}_L$. Then
$X:=\mathcal{O}\cap(X_L+\mathfrak{n}_P)$ is an orbital variety of
$\mathcal{O}$, i.e., an irreducible component of
$\mathcal{O}\cap\mathfrak{n}$. Moreover, if $X_L$ is smooth, then
$X$ is smooth.
\end{proposition}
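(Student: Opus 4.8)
The plan is to combine the dimension count from Proposition \ref{P4} with the openness of $\mathcal{O}\cap(\mathcal{O}_L+\mathfrak{n}_P)$ in $\mathcal{O}_L+\mathfrak{n}_P$, and then transport the local geometry of $X_L$ along the affine fibration $X_L+\mathfrak{n}_P\to X_L$. First I would show that $X$ is irreducible of the right dimension. Since $X_L$ is an irreducible component of $\mathcal{O}_L\cap\mathfrak{n}_L$, the set $X_L+\mathfrak{n}_P$ is irreducible (it is the image of $X_L\times\mathfrak{n}_P$ under addition), of dimension $\dim X_L+\dim\mathfrak{n}_P=\frac12\dim\mathcal{O}_L+\dim\mathfrak{n}_P$. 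By Proposition \ref{P4}, $\mathcal{O}\cap(\mathcal{O}_L+\mathfrak{n}_P)$ is open dense in $\mathcal{O}_L+\mathfrak{n}_P$; intersecting with the irreducible subset $X_L+\mathfrak{n}_P$ (which meets the open set $\mathcal{O}_L\cap(X_L+\mathfrak n_P)$ nontrivially, as $X_L\subset\mathcal O_L$), I get that $X=\mathcal{O}\cap(X_L+\mathfrak{n}_P)$ is open dense in $X_L+\mathfrak{n}_P$, hence irreducible of dimension $\frac12\dim\mathcal{O}_L+\dim\mathfrak{n}_P$. Using Proposition \ref{P4}(b), this equals $\frac12(\dim\mathcal{O}-2\dim\mathfrak n_P)+\dim\mathfrak n_P=\frac12\dim\mathcal{O}$, which is the common dimension of the components of $\mathcal{O}\cap\mathfrak{n}$.

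Next I would check that $X\subset\mathcal{O}\cap\mathfrak{n}$ and that $X$ is actually a full component. The inclusion $X\subset\mathcal{O}$ is immediate, and $X\subset\mathfrak n$ because $X_L\subset\mathfrak{n}_L\subset\mathfrak{n}$ and $\mathfrak{n}_P\subset\mathfrak{n}$, using $\mathfrak{n}=\mathfrak{n}_L\oplus\mathfrak{n}_P$. Since $X$ is irreducible of dimension $\frac12\dim\mathcal{O}$ and $\mathcal{O}\cap\mathfrak{n}$ is equidimensional of that same dimension, $X$ is contained in some component $X'$ of $\mathcal{O}\cap\mathfrak{n}$, and equality of dimensions forces $\overline{X}=X'$; so $\overline X$ is an orbital variety of $\mathcal{O}$. (I should be a little careful about whether $X$ is closed in $\mathcal O\cap\mathfrak n$: $X$ is closed in $\mathcal O\cap(X_L+\mathfrak n_P)$, and one uses that $X_L+\mathfrak n_P$ is closed in $\mathfrak n_L\oplus\mathfrak n_P=\mathfrak n$ when $X_L$ is closed in $\mathfrak n_L$, which holds since $X_L$ is a component of the closed-in-$\mathfrak n_L$... actually $\mathcal O_L\cap\mathfrak n_L$ need not be closed; but $\overline{X_L}$ is closed and $X\subseteq\overline{X_L}+\mathfrak n_P$, so passing to closures is harmless for identifying the component.)

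Finally, for the smoothness statement, I would exploit that $X$, being open in $X_L+\mathfrak{n}_P$, is smooth exactly where $X_L+\mathfrak{n}_P$ is. The addition map $\mu\colon X_L\times\mathfrak{n}_P\to X_L+\mathfrak{n}_P$ need not be injective, but I can instead argue via the $P$-action: by Proposition \ref{P4}(c), $X=\mathrm{Ind}_L^G(\mathcal O_L)\cap(\mathcal O_L+\mathfrak n_P)$ restricted appropriately is a single $P$-orbit, or more precisely $\mathcal{O}\cap(\mathcal{O}_L+\mathfrak{n}_P)$ is a single $P$-orbit and $X$ is an open $B$-stable subset of it; a homogeneous space under a connected group is smooth, so $X$ is smooth — but this does not use smoothness of $X_L$ and in fact seems to make the hypothesis vacuous, which suggests the intended argument is different and I should not lean on (c) so heavily. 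The safer route: realize $X_L+\mathfrak n_P$ locally as $U_P$-equivariant, noting $B=B_LU_P$ acts, and that over the smooth locus of $X_L$ (all of $X_L$, by hypothesis) the projection $\mathfrak n_L\oplus\mathfrak n_P\to\mathfrak n_L$ restricts to a map $X_L+\mathfrak n_P\to\overline{X_L}$ whose fibers are translates of affine subspaces of $\mathfrak n_P$; since $X\subset\mathcal O$ is locally closed and irreducible of dimension $\dim X_L+\dim\mathfrak n_P$, and the fiber over a point of $X_L$ has dimension $\dim\mathfrak n_P$, this map restricted to $X$ is a smooth surjection onto $X_L$ (flat with smooth fibers over a smooth base), hence $X$ is smooth. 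The main obstacle I anticipate is precisely pinning down this fibration structure rigorously — controlling the fibers of $\mu$ and verifying flatness/smoothness of the projection $X\to X_L$ — since the addition map is not a group action and the interaction between $\mathcal O\cap(\cdots)$ and the product structure on $\mathfrak n_L\oplus\mathfrak n_P$ requires care; this is where I would expect to spend most of the effort.
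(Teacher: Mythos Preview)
Your overall strategy matches the paper's, but you overlook a one-line observation that dissolves the difficulty you spend the last paragraph worrying about: since $\mathfrak{n}=\mathfrak{n}_L\oplus\mathfrak{n}_P$ as a \emph{direct} sum, the addition map $\mathfrak{n}_L\times\mathfrak{n}_P\to\mathfrak{n}$ is a linear isomorphism, so its restriction $X_L\times\mathfrak{n}_P\to X_L+\mathfrak{n}_P$ is an isomorphism of varieties. There is no fibration to set up and no flatness to check; $X_L+\mathfrak{n}_P$ is literally a product, hence smooth as soon as $X_L$ is. Since you have already argued that $X$ is open in $X_L+\mathfrak{n}_P$, smoothness of $X$ follows immediately. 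Your sentence ``the addition map \ldots\ need not be injective'' is simply wrong in this setting and is the sole source of your anticipated obstacle.

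There is a second, more substantive gap. The proposition asserts that $X$ itself---not its closure---is an orbital variety, i.e.\ that $X$ is closed in $\mathcal{O}\cap\mathfrak{n}$. You only establish that $\overline{X}$ is a component, and you hedge (``passing to closures is harmless''). But it is not harmless: your smoothness argument applies to $X$, and if $X$ were merely a dense open subset of the orbital variety $\overline{X}$, you could not conclude that $\overline{X}$ is smooth. The paper handles this by showing $X$ is closed in $\mathcal{O}$: first, $X_L$ being closed in $\mathcal{O}_L$ makes $X$ closed in $\mathcal{O}\cap(\mathcal{O}_L+\mathfrak{n}_P)$; then one observes that for every $L$-orbit $\mathcal{O}'_L\subset\overline{\mathcal{O}_L}\setminus\mathcal{O}_L$ the dimension formula in Proposition~\ref{P4}(b) gives $\dim\mathrm{Ind}_L^G(\mathcal{O}'_L)<\dim\mathcal{O}$, whence by Proposition~\ref{P4}(a) the orbit $\mathcal{O}$ cannot meet $\mathcal{O}'_L+\mathfrak{n}_P$. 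Thus $\mathcal{O}\cap(\mathcal{O}_L+\mathfrak{n}_P)=\mathcal{O}\cap(\overline{\mathcal{O}_L}+\mathfrak{n}_P)$, and the latter is closed in $\mathcal{O}$. This step is essential and is missing from your proposal.
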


\begin{proof}
Since $X_L+\mathfrak{n}_P\subset\mathfrak{n}$, we know that $X$ is a
subset of $\mathcal{O}\cap\mathfrak{n}$. We first note that $X$ is
nonempty. Indeed, by definition of $\mathcal{O}$, we have
$\mathcal{O}\cap(\mathcal{O}_L+\mathfrak{n}_P)\not=\emptyset$, thus
there are $x\in\mathcal{O}_L$ and $x'\in\mathfrak{n}_P$ such that
$x+x'\in \mathcal{O}$. Since $X_L\subset\mathcal{O}_L$, we can find
$\ell\in L$ such that $\ell\cdot x\in X_L$. Since $\mathfrak{n}_P$
and $\mathcal{O}$ are $L$-stable, we conclude that $\ell\cdot(x+x')$
is an element of $\mathcal{O}\cap(X_L+\mathfrak{n}_P)=X$.

The map $\mathfrak{n}_L\times\mathfrak{n}_P\to\mathfrak{n}$,
$(x,x')\mapsto x+x'$ is an isomorphism. This guarantees that
$X_L+\mathfrak{n}_P\cong X_L\times\mathfrak{n}_P$ is an irreducible
closed subset of $\mathfrak{n}$ whose dimension is
\[\dim (X_L+\mathfrak{n}_P)=\dim X_L+\dim \mathfrak{n}_P=\frac{1}{2}(\dim\mathcal{O}_L+2\dim\mathfrak{n}_P)=\frac{1}{2}\dim\mathcal{O},\]
where we use that $\dim X_L=\frac{1}{2}\dim\mathcal{O}_L$ (see
Section \ref{section-1.2}); moreover $X_L+\mathfrak{n}_P$ is smooth
whenever $X_L$ is smooth.

Writing
$X=\mathcal{O}\cap(\mathcal{O}_L+\mathfrak{n}_P)\cap(X_L+\mathfrak{n}_P)$
and using that $\mathcal{O}\cap(\mathcal{O}_L+\mathfrak{n}_P)$ is
open in $\mathcal{O}_L+\mathfrak{n}_P$, we deduce that $X$ is open
in $X_L+\mathfrak{n}_P$. It follows that $X$ is irreducible and
$\dim X=\frac{1}{2}\dim\mathcal{O}=\dim\mathcal{O}\cap\mathfrak{n}$
(see Section \ref{section-1.2}); moreover $X$ is smooth whenever
$X_L$ is smooth. For completing the proof of the proposition (i.e.,
for concluding that $X$ is an orbital variety, that is, an
irreducible component of $\mathcal{O}\cap\mathfrak{n}$), it remains
to check that $X$ is a closed subset of
$\mathcal{O}\cap\mathfrak{n}$. To this end, it suffices to check
that $X$ is closed in $\mathcal{O}$ (since the inclusion
$X\subset\mathcal{O}\cap\mathfrak{n}$ is already known).

The fact that $X_L$ is a closed subset of $\mathcal{O}_L$ implies
that $X$ is a closed subset of
$\mathcal{O}\cap(\mathcal{O}_L+\mathfrak{n}_P)$. For every $L$-orbit
$\mathcal{O}'_L$ such that
$\mathcal{O}'_L\subset\overline{\mathcal{O}_L}\setminus\mathcal{O}_L$,
the inequality
\[\dim\mathcal{O}=\dim\mathcal{O}_L+2\mathfrak{n}_P>\dim\mathcal{O}'_L+2\mathfrak{n}_P=\dim\mathrm{Ind}_L^G(\mathcal{O}'_L)\]
holds, which guarantees (thanks to Proposition \ref{P4}\,{\rm (a)})
that $\mathcal{O}\cap(\mathcal{O}'_L+\mathfrak{n}_P)=\emptyset$.
Whence
$\mathcal{O}\cap(\mathcal{O}_L+\mathfrak{n}_P)=\mathcal{O}\cap(\overline{\mathcal{O}_L}+\mathfrak{n}_P)$,
so that $X$ is in fact a closed subset of
$\mathcal{O}\cap(\overline{\mathcal{O}_L}+\mathfrak{n}_P)$, and thus
a closed subset of $\mathcal{O}$.
\end{proof}

\begin{remark}
\label{R2} This statement includes as a special case the
construction of the so-called Richardson orbital varieties. The
Richardson nilpotent orbit attached to the parabolic subgroup $P$ is
by definition the unique nilpotent orbit $\mathcal{O}$ which
intersects the nilradical $\mathfrak{n}_P$ along an open dense
subset; in other words $\mathcal{O}$ is induced by the trivial
nilpotent $L$-orbit $\{0\}\subset\mathfrak{l}$. Proposition \ref{P5}
then shows that $\mathcal{O}\cap\mathfrak{n}_P$ is a smooth orbital
variety of $\mathcal{O}$ (called Richardson orbital variety). Every
Richardson nilpotent orbit therefore contains at least one smooth
orbital variety. However, outside of the case of $\mathfrak{g}=\mathfrak{sl}_n(\mathbb{K})$, most of nilpotent orbits are not Richardson (see
\cite{Collingwood-McGovern} or, e.g., \cite[Appendix C]{FM-2013}).
See however Remark \ref{R4}.
\end{remark}

\subsection{Proof of Proposition \ref{P2}}

\label{section-2.2}

The proposition, except part {\rm (a)} and the last sentence,
follows from Proposition \ref{P5}, Theorem \ref{T1}, and the
description of induced nilpotent orbits given in
\cite{DeGraaf-Elashvili}.

The last statement of the proposition appears in \cite[\S
II.11.4]{Spaltenstein} without proof. For the sake of completeness
we include here a proof of this statement.

Assume $G$ simple of type $\mathrm{G}_2$. The Lie algebra
$\mathfrak{g}$ contains five nilpotent orbits of respective
dimensions 0, 6, 8, 10, and 12. We focus on the nilpotent orbits
$\mathcal{O}$ of type $A_1$, which is the minimal nilpotent orbit of
$\mathfrak{g}$, i.e., $\dim\mathcal{O}=6$, and
$\widetilde{\mathcal{O}}$ of type $\widetilde{A}_1$, such that
$\dim\widetilde{\mathcal{O}}=8$. The other two nontrivial nilpotent
orbits are of Richardson type, hence they contain smooth orbital
varieties. We need to show that $\mathcal{O}\cap\mathfrak{n}$ is
irreducible and singular and to construct a smooth orbital variety
in $\widetilde{\mathcal{O}}$.

We fix a maximal torus $T\subset B$, denote by $\mathfrak{t}$ its
Lie algebra, and consider the corresponding root system $\Phi$ and
root space decompositions
$\mathfrak{g}=\mathfrak{t}\oplus\bigoplus_{\gamma\in\Phi}\mathfrak{g}_\gamma$
and $\mathfrak{n}=\bigoplus_{\gamma\in\Phi^+}\mathfrak{g}_\gamma$
where $\Phi^+$ is a system of positive roots determined by two
simple roots $\alpha$ (short) and $\beta$ (long) by
\[
\Phi^+=\{\alpha,\beta,\alpha+\beta,2\alpha+\beta,3\alpha+\beta,3\alpha+2\beta\}.
\]
We denote the positive roots by $\alpha_1,\ldots,\alpha_6$ in the
order they appear in the above description of $\Phi^+$, and we
set $\alpha_{-i}=-\alpha_i$. Let
$\{\lambda_\alpha,\lambda_\beta\}\subset\mathfrak{t}$ be the dual
basis of $\{\alpha,\beta\}\subset\mathfrak{t}^*$. The Lie algebra
$\mathfrak{g}$ has a basis $\{\lambda_\alpha,\lambda_\beta,e_\gamma\
(\gamma\in\Phi)\}$ such that
\[e_\gamma\in\mathfrak{g}_\gamma\quad\mbox{and}\quad
[e_\gamma,e_\delta]=\left\{\begin{array}{ll} 0 & \mbox{if
$\gamma+\delta\notin\Phi\cup\{0\}$,} \\
N_{\gamma,\delta}e_{\gamma+\delta} & \mbox{if
$\gamma+\delta\in\Phi$,} \\
h_\gamma:=\langle\gamma,\alpha\rangle\lambda_\alpha+\langle\gamma,\beta\rangle\lambda_\beta
& \mbox{if $\gamma+\delta=0$, $\gamma\in\Phi^+$,}
\end{array}\right.\]
where $(\langle\gamma,\alpha\rangle,\langle\gamma,\beta\rangle)$ are
the pairs of integers listed in the next table
\[
\begin{array}{|r||c|c|c|c|c|c|c|c|c|}
\hline
i & 1 & 2 & 3 & 4 & 5 & 6 \\
\hline (\langle\alpha_i,\alpha\rangle,\langle\alpha_i,\beta\rangle)
& (2,-3) & (-1,2) & (-1,3) & (1,0) & (1,-1) & (0,1) \\ \hline
\end{array}
\]
and $N_{\gamma,\delta}$ are the coefficients of the matrix
$N=\left(N_{\alpha_i,\alpha_j}\right)_{i,j\in\{-6,\ldots,-1,1,\ldots,6\}}$
given by
\[
N=\mbox{\small$\left(
\begin{array}{cccccccccccc}
0 & 0 & 0 & 0 & 0 & 0 & 0 & 1 & 1 & -1 & -1 & \mbox{-} \\
0 & 0 & 0 & 0 & 1 & 0 & 1 & 0 & 0 & -1 & \mbox{-} & -1 \\
0 & 0 & 0 & 3 & 0 & 3 & -2 & 0 & 2 & \mbox{-} & -1 & -1 \\
0 & 0 & -3 & 0 & 0 & -2 & -3 & 1 & \mbox{-} & 2 & 0 & 1 \\
0 & -1 & 0 & 0 & 0 & -1 & 0 & \mbox{-} & 1 & 0 & 0 & 1 \\
0 & 0 & -3 & 2 & 1 & 0 & \mbox{-} & 0 & -3 & -2 & 1 & 0 \\
0 & -1 & 2 & 3 & 0 & \mbox{-} & 0 & -1 & -2 & 3 & 0 & 0 \\
-1 & 0 & 0 & -1 & \mbox{-} & 0 & 1 & 0 & 0 & 0 & 1 & 0 \\
-1 & 0 & -2 & \mbox{-} & -1 & 3 & 2 & 0 & 0 & 3 & 0 & 0 \\
1 & 1 & \mbox{-} & -2 & 0 & 2 & -3 & 0 & -3 & 0 & 0 & 0 \\
1 & \mbox{-} & 1 & 0 & 0 & -1 & 0 & -1 & 0 & 0 & 0 & 0 \\
\mbox{-} & 1 & 1 & -1 & -1 & 0 & 0 & 0 & 0 & 0 & 0 & 0
\end{array}
\right)$}
\]
(see \cite{Carter, Mayanskiy}). Consider an arbitrary element
\[x=\sum_{i=1}^6x_ie_{\alpha_i}\in\mathfrak{n}.\]
The matrix $C_x$ of $[x,\cdot]:\mathfrak{g}\to\mathfrak{g}$ in the
basis
$(e_{-\alpha_6},\ldots,e_{-\alpha_1},\lambda_\alpha,\lambda_\beta,e_{\alpha_1},\ldots,e_{\alpha_6})$
is
\[
\mbox{\small\setlength{\arraycolsep}{3pt}
$\left(\begin{array}{cccccccccccccc}
0 & 0 & 0 & 0 & 0 & 0 & 0 & 0 & 0 & 0 & 0 & 0 & 0 & 0 \\
-x_2 & 0 & 0 & 0 & 0 & 0 & 0 & 0 & 0 & 0 & 0 & 0 & 0 & 0 \\
-x_3 & -x_1 & 0 & 0 & 0 & 0 & 0 & 0 & 0 & 0 & 0 & 0 & 0 & 0 \\
x_4 & 0 & 2x_1 & 0 & 0 & 0 & 0 & 0 & 0 & 0 & 0 & 0 & 0 & 0 \\
x_5 & 0 & 0 & 3x_1 & 0 & 0 & 0 & 0 & 0 & 0 & 0 & 0 & 0 & 0 \\
0 & x_4 & -2x_3 & -x_2 & 0 & 0 & 0 & 0 & 0 & 0 & 0 & 0 & 0 & 0 \\
0 & x_5 & x_4 & -x_3 & -x_2 & 2x_1 & 0 & 0 & 0 & 0 & 0 & 0 & 0 & 0 \\
x_6 & -x_5 & 0 & 3x_3 & 2x_2 & -3x_1 & 0 & 0 & 0 & 0 & 0 & 0 & 0 & 0 \\
0 & 0 & x_5 & -2x_4 & -x_3 & 0 & -x_1 & 0 & 0 & 0 & 0 & 0 & 0 & 0 \\
0 & x_6 & 0 & 0 & 0 & 3x_3 & 0 & -x_2 & 0 & 0 & 0 & 0 & 0 & 0 \\
0 & 0 & x_6 & 0 & 0 & 2x_4 & -x_3 & -x_3 & x_2 & -x_1 & 0 & 0 & 0 & 0 \\
0 & 0 & 0 & -x_6 & 0 & -x_5 & -2x_4 & -x_4 & 2x_3 & 0 & -2x_1 & 0 & 0 & 0 \\
0 & 0 & 0 & 0 & -x_6 & 0 & -3x_5 & -x_5 & -3x_4 & 0 & 0 & 3x_1 & 0 & 0 \\
0 & 0 & 0 & 0 & 0 & 0 & -3x_6 & -2x_6 & 0 & -x_5 & -3x_4 & 3x_3 &
x_2 & 0
\end{array}\right).$}
\]
Note that
\[
\dim G\cdot x=\dim G-\dim Z_G(x)=\dim\mathfrak{g}-\dim\ker
[x,\cdot]=\mathrm{rank}\,C_x.
\]

Since $\mathcal{O}$ is the unique nilpotent orbit of $\mathfrak{g}$
of dimension $6$, we obtain the characterization
\[x\in\mathcal{O}\cap\mathfrak{n}\Leftrightarrow \dim G\cdot x=6\Leftrightarrow \mathrm{rank}\,C_x=6.\]
Note that the form of the matrix $C_x$ easily implies
\begin{eqnarray*}
 \mathrm{rank}\,C_x=6 & \Rightarrow & x_1=0,
 \\
 \mathrm{rank}\,C_x=6\ \mbox{ and }\ x_2=0& \Leftrightarrow & x_1=x_2=x_3=x_4=0\ \mbox{ and }\ (x_5,x_6)\not=(0,0),
\end{eqnarray*}
and for $x\in\mathfrak{n}$ such that $x_1=0$, $x_2\not=0$, by means
of elementary operations on the matrix $C_x$, we can see that
\begin{eqnarray*}
\mathrm{rank}\,C_x=6 & \Leftrightarrow & x_2x_4=x_3^2,\ \
x_3x_5=-x_4^2,\ \mbox{ and }\ x_2x_5=-x_3x_4.
\end{eqnarray*}
Finally we obtain
\[\mathcal{O}\cap\mathfrak{n}=\{x\in\mathfrak{n}\setminus\{0\}:x_1=x_2x_4-x_3^2=x_3x_5+x_4^2=x_2x_5+x_3x_4=0\}.\]
It is easy to deduce that $\mathcal{O}\cap\mathfrak{n}$ is
irreducible and singular at every point $x$ such that
$x_1=\ldots=x_5=0$ and $x_6\not=0$. Since $Z_G(x)$ is connected
whenever $x\in\mathcal{O}$ (see \cite[\S
8.4]{Collingwood-McGovern}), we conclude from Proposition \ref{P1}
that the Springer fiber $\mathcal{B}_x$ is also irreducible and
singular. This shows the last statement in Proposition \ref{P2}.

Let
\[\mathcal{V}:=\{x\in\mathfrak{n}:x_2=3x_4^2+4x_3x_5-4x_1x_6=0\},\]
which is a 4-dimensional, irreducible, closed subvariety of
$\mathfrak{n}$ whose sole singular point is $0$. For every
$x\in\mathcal{V}$ we can see that
\[\mathrm{rank}\,C_x\leq 8\qquad\mbox{and}\qquad \mathrm{rank}\,C_x=8\Leftrightarrow(x_1,x_3)\not=(0,0).\]
Hence
$\widetilde{\mathcal{V}}:=\mathcal{V}\cap\widetilde{\mathcal{O}}=\{x\in\mathcal{V}:(x_1,x_3)\not=(0,0)\}$
is a smooth, irreducible, closed subvariety of
$\widetilde{\mathcal{O}}\cap\mathfrak{n}$ such that
$\dim\widetilde{\mathcal{V}}=4=\dim\widetilde{\mathcal{O}}\cap\mathfrak{n}$.
Therefore $\widetilde{\mathcal{V}}$ is a smooth irreducible
component of $\widetilde{\mathcal{O}}\cap\mathfrak{n}$, i.e., a
smooth orbital variety of $\widetilde{\mathcal{O}}$. This completes
the proof of the proposition.

\begin{remark}
Assume in this remark that $G$ is simple. Fix a root system $\Phi$,
a root space decomposition
$\mathfrak{g}=\mathfrak{t}\oplus\bigoplus_{\gamma\in\Phi}\mathfrak{g}_\gamma$,
root vectors $e_\gamma\in\mathfrak{g}_\gamma$, and a set of simple
roots $\Pi\subset\Phi$, compatible with the choice of $B$ and
$\mathfrak{n}$. Then, letting $\mathcal{O}_\mathrm{min}$ be the
minimal nilpotent orbit of $\mathfrak{g}$, it is shown in \cite[\S
6.2]{Braverman-Joseph} that the map $\gamma\mapsto \overline{B\cdot
e_\gamma}\cap\mathcal{O}_{\mathrm{min}}$ is a one-to-one
correspondence between the long roots $\gamma\in\Pi$ and the orbital
varieties of $\mathcal{O}_\mathrm{min}$, i.e., the irreducible
components of $\mathcal{O}_\mathrm{min}\cap\mathfrak{n}$. This
general property yields the irreducibility of
$\mathcal{O}_\mathrm{min}\cap\mathfrak{n}$ when $G$ is of type
$\mathrm{G}_2$, which is retrieved in the above proof (where it is
also shown that this intersection is singular). When
$G=\mathrm{Sp}_{2n}(\mathbb{K})$ we similarly obtain that
$\mathcal{O}_\mathrm{min}\cap\mathfrak{n}$ is irreducible; however,
this time, it is smooth (see Theorem \ref{T1}).
\end{remark}

\section{Proof of Theorem \ref{T1} in the case $G=\mathrm{SL}_n(\mathbb{K})$}

\label{section-SLn}

In the case of $G=\mathrm{SL}_n(\mathbb{K})$, Theorem \ref{T1} is
already well known (see \cite{Spaltenstein}), but we give here a
proof for the sake of completeness. Part of the notation introduced
in this section is also used in Sections \ref{section-4}--\ref{section-6}.


Hereafter in this section we assume that
$G=\mathrm{SL}_n(\mathbb{K})$, so the flag variety $\mathcal{B}$ is
naturally isomorphic to the variety $\mathrm{Fl}(V)$ of complete
flags of the space $V=\mathbb{K}^n$, i.e.,
\[\mathrm{Fl}(V)=\{(V_0\subset V_1\subset\ldots\subset V_n):\mbox{$V_i$ is an $i$-dimensional subspace of $V$ for all $i$}\}.\]
Moreover a nilpotent element $x\in\mathfrak{sl}_n(\mathbb{K})$ is a
nilpotent endomorphism of $V$, and the Springer fiber
$\mathcal{B}_x\subset\mathcal{B}$ can be identified with the
subvariety of $x$-stable complete flags, i.e.,
\[\mathrm{Fl}_x(V):=\{(V_0,\ldots,V_n)\in\mathrm{Fl}(V):x(V_i)\subset V_i\ \mbox{for all $i$}\}.\]

Note that $Z_{\mathrm{SL}_n(\mathbb{K})}(x)\subset
Z_{\mathrm{GL}_n(\mathbb{K})}(x)$. Since the group
$Z_{\mathrm{GL}_n(\mathbb{K})}(x)$ is connected, its action on
$\mathrm{Fl}_x(V)\cong\mathcal{B}_x$ stabilizes each irreducible
component. It follows that every component of $\mathcal{B}_x$ is
$Z_{\mathrm{SL}_n(\mathbb{K})}(x)$-stable. Thus, in view of
Proposition \ref{P1}, questions (\ref{1}) and (\ref{2}) are in fact
equivalent in the case of $\mathrm{SL}_n(\mathbb{K})$, and for
proving Theorem \ref{T1}, it suffices to show the existence of a
smooth component of $\mathcal{B}_x\cong\mathrm{Fl}_x(V)$ for all
$x\in\mathfrak{sl}_n(\mathbb{K})$ nilpotent.

Let $\lambda(x)=(\lambda_1,\ldots,\lambda_k)$ be the list of the
lengths of the Jordan blocks of $x$, written in nonincreasing order.
Thus $\lambda(x)$ is a partition of $n$. Let
$\lambda(x)^*=(\lambda_1^*,\ldots,\lambda_{\lambda_1}^*)$ denote the
dual partition of $n$, that is,
\[\lambda_j^*:=|\{i=1,\ldots,k:\lambda_i\geq j\}|=\dim\ker x^j/\ker
x^{j-1}\ \mbox{ for all $j=1,\ldots,\lambda_1$.}\]
Many properties of the Springer fiber
$\mathcal{B}_x\cong\mathrm{Fl}_x(V)$ can be described
combinatorially in terms of the partition $\lambda(x)$. For
instance, $\mathrm{Fl}_x(V)$ is equidimensional of dimension
\begin{equation}
\label{dimension-Flx}
\dim\mathrm{Fl}_x(V)=\sum_{j=1}^{\lambda_1}\binom{\lambda_j^*}{2}.
\end{equation}
In addition, the irreducible components of $\mathrm{Fl}_x(V)$ can be
parameterized by the standard Young tableaux of shape $\lambda(x)$.
We refer to \cite[\S II.5]{Spaltenstein} or Section
\ref{section-4.1} below for more details.

Note that the iterated kernels of $x$ form a partial flag of $V$,
\begin{equation}
\label{3} (\ker x\subset \ker x^2\subset \ldots\subset \ker
x^{\lambda_1}=V).
\end{equation}
We consider the subset $\mathcal{K}\subset\mathrm{Fl}(V)$ formed by
complete flags which refine this partial flags, i.e., which include
all the subspaces $\ker x^j$ for $j=1,\ldots,\lambda_1$, in other
words:
\[
\mathcal{K}:=\{(V_0,\ldots,V_n)\in\mathrm{Fl}(V):V_{\lambda_1^*+\ldots+\lambda_j^*}=\ker
x^j\ \mbox{ for all $j=1,\ldots,\lambda_1$}\}.
\]
Clearly $\mathcal{K}\subset\mathrm{Fl}_x(V)$. Moreover $\mathcal{K}$
is closed, isomorphic to the multiple flag variety
\[\mathrm{Fl}(\ker x)\times \mathrm{Fl}(\ker x^2/\ker x)\times\cdots\times \mathrm{Fl}(\ker x^{\lambda_1}/\ker x^{\lambda_1-1}),\]
hence irreducible, smooth, and of the same dimension as
$\mathrm{Fl}_x(V)$. Therefore $\mathcal{K}$ is a smooth irreducible
component of $\mathrm{Fl}_x(V)$. Since
$x\in\mathfrak{sl}_n(\mathbb{K})$ (nilpotent) is arbitrary, this
shows Theorem \ref{T1} in the case of $\mathrm{SL}_n(\mathbb{K})$.

\begin{remark}
\label{R4} Another way of proving Theorem \ref{T1} in the case of
$\mathrm{SL}_n(\mathbb{K})$ is to note that, for every nilpotent
element $x\in\mathfrak{sl}_n(\mathbb{K})$, its nilpotent orbit
$\mathcal{O}_x$ is of Richardson type, corresponding to the
parabolic subgroup $P$ formed by the elements
$g\in\mathrm{SL}_n(\mathbb{K})$ which fix the partial flag written
in (\ref{3}). Hence, in view of Remark~\ref{R2}, $\mathcal{O}_x$
contains at least one smooth orbital variety.
\end{remark}

\section{Some preliminaries on classical cases}

\label{section-4}

\subsection{A parametrization of components of $\mathrm{Fl}_x(V)$ by Young tableaux}

\label{section-4.1}

As in Section \ref{section-SLn}, let
$x\in\mathfrak{sl}_n(\mathbb{K})$ be a nilpotent endomorphism of
$V=\mathbb{K}^n$ and let $\lambda(x)=(\lambda_1,\ldots,\lambda_k)$
be the lengths of the Jordan blocks of $x$ listed in nonincreasing
order. The so-obtained partition $\lambda=\lambda(x)\vdash n$ can be
viewed as a Young diagram, i.e., as the set of $n$ empty boxes
displayed along left-justified rows of lengths
$\lambda_1,\ldots,\lambda_k$. A standard Young tableau $\tau$ of
shape $\lambda$ is by definition a numbering of the boxes of
$\lambda$ from $1$ to $n$ such that the numbers increase from left
to right along the rows and from top to bottom along the columns.
Equivalently, $\tau$ can be viewed as a maximal chain of Young
diagrams
\[\emptyset=\lambda^0(\tau)\subset\lambda^1(\tau)\subset \lambda^2(\tau)\subset\ldots\subset \lambda^n(\tau)=\lambda,\]
where $\lambda^i(\tau)$ denotes the shape of the subtableau formed
by the first $i$ entries of $\tau$.

It is known (see \cite{Spaltenstein}) that the components of
$\mathrm{Fl}_x(V)$ can be parameterized by the standard Young
tableaux of shape $\lambda(x)$. In fact there are several ways to
realize this parametrization, and the way (described below) which is
suitable for our purpose in this paper is somewhat different than
the one in \cite{Spaltenstein}.

Let
$(a_i,b_i)_{i=0}^n:=(\lfloor\frac{i}{2}\rfloor,n-\lceil\frac{i}{2}\rceil)_{i=0}^n$.
In other words
\[\textstyle(a_i,b_i)_{i=0}^n=\big(
(0,n),(0,n-1),(1,n-1),(1,n-2),\ldots,(\lfloor\frac{n}{2}\rfloor,\lfloor\frac{n}{2}\rfloor)
\big)\] and we have $b_i-a_i=n-i$ for all $i=0,\ldots,n$. Set
\[\mathrm{Fl}_{x,\tau}(V)=\{(V_0,\ldots,V_n)\in\mathrm{Fl}_x(V):\lambda(x|_{V_{b_i}/V_{a_i}})=\lambda^{n-i}(\tau)\ \forall i=1,\ldots,n\},\]
where $x|_{V_{b}/V_{a}}$ stands for the nilpotent endomorphism
induced by $x$ on the subquotient $V_{b}/V_{a}$ and
$\lambda(x|_{V_{b}/V_{a}})$ stands for its Jordan form, seen as a
Young diagram.

\begin{proposition}[\protect{\cite[\S3.1--3.2]{Fresse-2009}}]
\label{P6} $\mathrm{Fl}_x(V)=\bigsqcup_\tau\mathrm{Fl}_{x,\tau}(V)$,
where the union is taken over all standard Young tableaux of shape
$\lambda(x)$. Moreover the subsets $\mathrm{Fl}_{x,\tau}(V)$ are
nonempty, locally closed, irreducible, smooth, all of the same
dimension. Therefore, the closures
$\overline{\mathrm{Fl}_{x,\tau}(V)}$ (for $\tau$ running over the
set of standard Young tableaux of shape $\lambda(x)$) are exactly
the irreducible components of $\mathrm{Fl}_x(V)$.
\end{proposition}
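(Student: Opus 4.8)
The plan is to prove, in order: (i) the set-theoretic equality $\mathrm{Fl}_x(V)=\bigsqcup_\tau\mathrm{Fl}_{x,\tau}(V)$, the union being disjoint and indexed by the standard Young tableaux of shape $\lambda(x)$; (ii) that each $\mathrm{Fl}_{x,\tau}(V)$ is nonempty, locally closed, irreducible, smooth, and of one and the same dimension; after which the description of the components is formal. For (i) and the local closedness, fix an $x$-stable complete flag $(V_0,\ldots,V_n)$; since each $V_i$ is $x$-stable, $x$ induces a nilpotent endomorphism on each subquotient $V_{b_i}/V_{a_i}$, of Jordan type $\mu^i:=\lambda(x|_{V_{b_i}/V_{a_i}})$, a Young diagram with $b_i-a_i=n-i$ boxes; so $\mu^n=\emptyset$ and $\mu^0=\lambda(x)$. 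By the definition of the pairs $(a_i,b_i)$, passing from index $i$ to index $i-1$ enlarges $V_b/V_a$ by a single $x$-stable line, either ``at the top'' (replacing $V_b$ by $V_{b+1}$) or ``at the bottom'' (replacing $V_a$ by $V_{a-1}$); invoking the elementary fact that restricting a nilpotent endomorphism to an invariant hyperplane, or passing to an invariant codimension-one quotient, deletes exactly one removable-corner box from its Jordan type (compare the ranks of the powers of the endomorphism before and after), one gets that $\emptyset=\mu^n\subset\mu^{n-1}\subset\cdots\subset\mu^0=\lambda(x)$ is a chain of Young diagrams growing by one box at a time, i.e.\ the chain of a unique standard Young tableau $\tau$ of shape $\lambda(x)$, with $\mu^i=\lambda^{n-i}(\tau)$ for all $i$. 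Thus the flag lies in $\mathrm{Fl}_{x,\tau}(V)$ for precisely this $\tau$, which gives the disjoint decomposition; each $\mathrm{Fl}_{x,\tau}(V)$ is nonempty by running the same bookkeeping backwards, using that every removable corner of $\lambda(x)$ is realized by an $x$-stable hyperplane (or line) of the corresponding Jordan type (or, \emph{a posteriori}, because the number of components of $\mathrm{Fl}_x(V)$ equals the number of standard Young tableaux of shape $\lambda(x)$); and each $\mathrm{Fl}_{x,\tau}(V)$ is locally closed in $\mathrm{Fl}(V)$ since $x$-stability is a closed condition while each prescription $\mathrm{rank}(x^k|_{V_{b_i}/V_{a_i}})=r$ is locally closed.

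For irreducibility, smoothness and equidimensionality I would proceed by induction on $n$, peeling two subspaces off at once so as to respect the zig-zag of the pairs $(a_i,b_i)$. Consider the morphism $\pi\colon\mathrm{Fl}_{x,\tau}(V)\to Z_\tau$, $(V_0,\ldots,V_n)\mapsto(V_1,V_{n-1})$, where $Z_\tau$ is the (locally closed) set of pairs $(V_1,V_{n-1})$ of $x$-stable subspaces with $V_1\subset V_{n-1}$, $\dim V_1=1$, $\dim V_{n-1}=n-1$, $\lambda(x|_{V_{n-1}})=\lambda^{n-1}(\tau)$ and $\lambda(x|_{V_{n-1}/V_1})=\lambda^{n-2}(\tau)$. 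A direct computation with $(a_i,b_i)=(\lfloor i/2\rfloor,n-\lceil i/2\rceil)$ shows that, for a point $(V_1,V_{n-1})$ of $Z_\tau$, the induced complete flag of $V'':=V_{n-1}/V_1$ has $i$-th subquotient equal to the $(i+2)$-th subquotient of the original flag, and that the conditions defining $\mathrm{Fl}_{x,\tau}(V)$ for the indices $i\geq 3$ turn into the conditions defining $\mathrm{Fl}_{x'',\tau''}(V'')$, where $x'':=x|_{V''}$ has Jordan type $\lambda^{n-2}(\tau)$ and $\tau''$ is $\tau$ with its two largest entries removed. Hence the fibre of $\pi$ over $(V_1,V_{n-1})$ is canonically $\mathrm{Fl}_{x'',\tau''}(V'')$, an instance of the same problem in dimension $n-2$, to which the inductive hypothesis applies; in particular the fibre is nonempty, smooth, irreducible, of a dimension depending only on $\lambda(x)$ and $\tau''$. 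It remains to see that $Z_\tau$ is smooth and irreducible and that $\pi$ is a Zariski-locally trivial bundle; both follow from the facts that the connected centralizer $Z_{\mathrm{GL}(V)}(x)$ acts transitively on $Z_\tau$ --- which uses the known description of the $Z_{\mathrm{GL}(V)}(x)$-orbits on the varieties of $x$-stable lines, resp.\ hyperplanes, of a fixed Jordan type --- and acts on $\mathrm{Fl}_{x,\tau}(V)$ compatibly with $\pi$, so that $\pi$ is a homogeneous fibration. We conclude that $\mathrm{Fl}_{x,\tau}(V)$ is smooth and irreducible, of dimension $\dim Z_\tau+\dim\mathrm{Fl}_{x'',\tau''}(V'')$, and a routine computation of these two summands (or a comparison with (\ref{dimension-Flx})) shows the total is independent of $\tau$ and equals $\dim\mathrm{Fl}_x(V)=\sum_{j=1}^{\lambda_1}\binom{\lambda_j^*}{2}$. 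This last part --- transitivity of the centralizer on $Z_\tau$, whence smoothness of the base and local triviality of $\pi$, together with the dimension bookkeeping giving equidimensionality --- is where I expect the real work; the remainder is soft.

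Finally, with $\mathrm{Fl}_x(V)=\bigsqcup_\tau\mathrm{Fl}_{x,\tau}(V)$ a finite disjoint union of nonempty locally closed irreducible subsets of one common dimension $D$, the description of the components is a formality: $\mathrm{Fl}_x(V)=\bigcup_\tau\overline{\mathrm{Fl}_{x,\tau}(V)}$ is a finite union of irreducible closed subsets of dimension $D$, so an inclusion between two of them forces equality (same finite dimension), and equality for $\tau\neq\tau'$ is impossible since then $\mathrm{Fl}_{x,\tau}(V)$ and $\mathrm{Fl}_{x,\tau'}(V)$ --- each locally closed and dense in that common closure, hence open in it --- would meet, contradicting disjointness. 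Therefore the $\overline{\mathrm{Fl}_{x,\tau}(V)}$ are pairwise distinct maximal irreducible closed subsets, i.e.\ exactly the irreducible components of $\mathrm{Fl}_x(V)$, which is thereby also seen to be equidimensional of dimension $D$.
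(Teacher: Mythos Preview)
The paper does not prove this proposition: it is cited from \cite[\S3.1--3.2]{Fresse-2009}, so there is no in-paper argument to compare against. The remark immediately following Proposition~\ref{P6} indicates that the cited proof in fact establishes the result uniformly for every admissible double sequence $(a_i,b_i)$, not only for the particular zig-zag used here.

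Your outline is correct. The disjoint decomposition, nonemptiness, and local closedness are handled cleanly. For the inductive step you peel off $V_1$ and $V_{n-1}$ together, so that the fibre is again an instance of the \emph{same} zig-zag in dimension $n-2$; this is a neat device, but as you yourself flag, the transitivity of $Z_{\mathrm{GL}(V)}(x)$ on the two-step base $Z_\tau$ is the substantive point and is not quite as immediate as the one-step version. It is true, and one route is: the centralizer is transitive on $x$-stable hyperplanes $V_{n-1}$ of prescribed restricted Jordan type (standard), so one may fix $V_{n-1}$; it then remains to see that the stabilizer of $V_{n-1}$ in $Z_{\mathrm{GL}(V)}(x)$ still acts transitively on the $x|_{V_{n-1}}$-stable lines $V_1\subset V_{n-1}$ of the prescribed quotient type. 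An alternative, closer to the argument in \cite{Fresse-2009} and suggested by the remark after Proposition~\ref{P6}, is to prove the statement for all admissible sequences $(a_i,b_i)$ simultaneously and peel off one subspace per step; the fibre is then governed by a shifted (still admissible) sequence, while the base is simply the variety of $x$-stable lines (or hyperplanes) of fixed type, on which transitivity of the centralizer is immediate. Your final paragraph, deducing that the closures $\overline{\mathrm{Fl}_{x,\tau}(V)}$ are precisely the irreducible components, is correct.
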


\begin{remark}
As shown in \cite[\S3.1--3.2]{Fresse-2009}, Proposition \ref{P6}
holds whenever $(a_i,b_i)_{i=0}^n$ is replaced by any double
sequence such that
\[0=a_0\leq a_1\leq\ldots\leq a_n=b_n\leq \ldots\leq b_1\leq b_0=n\ \mbox{ and }\ b_i-a_i=n-i\mbox{ for all $i$.}\]
This leads to several parametrizations of the components of
$\mathrm{Fl}_x(V)$. The original parametrization given in
\cite{Spaltenstein} corresponds to the choice of the sequence
$(0,n-i)_{i=0}^n$.
\end{remark}

\subsection{Some components of $\mathrm{Fl}_x(V)$ associated to domino tableaux}

\label{section-4.2}

As above, $\lambda=\lambda(x)\vdash n$ is the Jordan form of a nilpotent element $x\in\mathfrak{sl}_n(\mathbb{K})$. A (standard) domino tableau $d$ of shape $\lambda$ is by definition a numbering of the boxes of $\lambda$ by the numbers $0,1,\ldots,\lfloor\frac{n}{2}\rfloor$ such that
\begin{itemize}
\item every $i\in\{1,\ldots,\lfloor\frac{n}{2}\rfloor\}$ appears exactly twice in the numbering
while $0$ appears at most once (i.e., $0$ appears only if $n$ is odd);
\item the numbers are nondecreasing from left to right along the rows and from top to bottom along the columns;
\item for every $i\in\{1,\ldots,\lfloor\frac{n}{2}\rfloor\}$, the two boxes of number $i$ are adjacent, i.e., they belong to the same row, forming a horizontal domino {\scriptsize $\young(ii)$}, or to the same column, forming a vertical domino {\scriptsize $\young(i,i)$}.
\end{itemize}
The domino tableau $d$ induces a chain of Young diagrams
\begin{equation}
\label{4}
\lambda^0(d)\subset\lambda^1(d)\subset\lambda^2(d)\subset\ldots\subset\lambda^{\lfloor\frac{n}{2}\rfloor}(d)=\lambda
\end{equation}
where, for every $i\in\{0,\ldots,\lfloor\frac{n}{2}\rfloor\}$, we
denote by $\lambda^i(d)$ the shape of the subtableau of $d$ formed
by the boxes of number $\leq i$. Thus
$\lambda^{\lfloor\frac{n}{2}\rfloor-i}(d)$ has $n-2i$ boxes. The set
\[\mathrm{Fl}_{x,d}(V):=\{(V_0,\ldots,V_n)\in\mathrm{Fl}_x(V):\lambda(x|_{V_{n-i}/V_i})=\lambda^{\lfloor\frac{n}{2}\rfloor-i}(d)\ \forall \textstyle i=0,\ldots,\lfloor\frac{n}{2}\rfloor\}\]
is a well-defined subset of $\mathrm{Fl}_x(V)$.

\begin{proposition}
For every domino tableau $d$ of shape $\lambda=\lambda(x)$, the
subset $\mathrm{Fl}_{x,d}(V)$ is nonempty, locally closed, smooth,
and its closure is an irreducible component of $\mathrm{Fl}_x(V)$.
\end{proposition}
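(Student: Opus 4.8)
The plan is to reduce the statement to Proposition~\ref{P6} by showing that $\mathrm{Fl}_{x,d}(V)$ coincides with one of the sets $\mathrm{Fl}_{x,\tau}(V)$ occurring there. Write $m=\lfloor\frac{n}{2}\rfloor$. The even terms of the double sequence $(a_i,b_i)_{i=0}^n$ used in Proposition~\ref{P6} are $(a_{2k},b_{2k})=(k,n-k)$ for $k=0,\ldots,m$, which are exactly the index pairs occurring in the definition of $\mathrm{Fl}_{x,d}(V)$; hence any flag $(V_0,\ldots,V_n)\in\mathrm{Fl}_{x,\tau}(V)$ automatically satisfies $\lambda(x|_{V_{n-k}/V_k})=\lambda^{n-2k}(\tau)$ for every $k$, and the core of the argument is a comparison between the chain $(\lambda^{n-2k}(\tau))_{k}$ and the chain (\ref{4}) attached to $d$.

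The combinatorial step I would isolate is: for each domino tableau $d$ of shape $\lambda$ there is a unique standard Young tableau $\tau(d)$ of shape $\lambda$ with $\lambda^{n-2k}(\tau(d))=\lambda^{m-k}(d)$ for all $k=0,\ldots,m$. To see this, observe that this prescription fixes every second shape in the maximal chain $\emptyset=\lambda^0(\tau)\subset\cdots\subset\lambda^n(\tau)=\lambda$, and that between two consecutive shapes $\lambda^{m-k-1}(d)\subset\lambda^{m-k}(d)$, which differ by a horizontal or a vertical domino, there is exactly one way to insert an intermediate shape by adjoining a single box, namely the left box of the horizontal domino, respectively the upper box of the vertical domino; a short computation shows this intermediate shape is always a genuine Young diagram (when $n$ is odd one also uses that $\lambda^0(d)$ consists of a single box and matches $\lambda^1(\tau)$). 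This produces a well-defined maximal chain of Young diagrams, i.e.\ a standard Young tableau $\tau(d)$, and since each insertion is forced, uniqueness follows.

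With this lemma, I would verify $\mathrm{Fl}_{x,d}(V)=\mathrm{Fl}_{x,\tau(d)}(V)$. The inclusion from right to left is immediate, since any flag in $\mathrm{Fl}_{x,\tau(d)}(V)$ in particular satisfies the conditions $\lambda(x|_{V_{n-k}/V_k})=\lambda^{n-2k}(\tau(d))=\lambda^{m-k}(d)$ defining $\mathrm{Fl}_{x,d}(V)$. For the reverse inclusion, given $F\in\mathrm{Fl}_{x,d}(V)$ one invokes Proposition~\ref{P6} to write $F\in\mathrm{Fl}_{x,\tau}(V)$ for a unique $\tau$; comparing the two families of conditions forces $\lambda^{n-2k}(\tau)=\lambda^{m-k}(d)$ for all $k$, hence $\tau=\tau(d)$ by the uniqueness just obtained, so $F\in\mathrm{Fl}_{x,\tau(d)}(V)$. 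Once the identification $\mathrm{Fl}_{x,d}(V)=\mathrm{Fl}_{x,\tau(d)}(V)$ is in place, Proposition~\ref{P6} gives at once that $\mathrm{Fl}_{x,d}(V)$ is nonempty, locally closed, and smooth, and that its closure is an irreducible component of $\mathrm{Fl}_x(V)$. I do not expect a genuine obstacle in this argument: the only slightly delicate points are matching the two indexing conventions (handled by restricting the double sequence of Proposition~\ref{P6} to its even terms) and the elementary check that the single-box interpolation of a domino is always a legitimate Young diagram, together with the parity-of-$n$ corner cases.
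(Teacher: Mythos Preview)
Your proposal is correct and follows essentially the same route as the paper: both arguments construct the unique standard Young tableau $\tau(d)$ by noting that a domino admits exactly one single-box interpolation, establish $\mathrm{Fl}_{x,d}(V)=\mathrm{Fl}_{x,\tau(d)}(V)$ via Proposition~\ref{P6}, and then read off the conclusion. The only cosmetic difference is that the paper phrases the reverse inclusion as ``$\mathrm{Fl}_{x,\tau}(V)\cap\mathrm{Fl}_{x,d}(V)=\emptyset$ for $\tau\neq\tau(d)$'' rather than your equivalent ``the unique $\tau$ with $F\in\mathrm{Fl}_{x,\tau}(V)$ must equal $\tau(d)$''.
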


\begin{proof}
For every $i\in\{1,\ldots,\lfloor\frac{n}{2}\rfloor\}$, the Young
diagrams $\lambda^i(d)$ and $\lambda^{i-1}(d)$ differ by one couple
of adjacent boxes, hence there is a unique Young diagram
$\tilde{\lambda}^i(d)$ such that
$\lambda^{i-1}(d)\subsetneq\tilde{\lambda}^i(d)\subsetneq\lambda^i(d)$.
This means that there is a unique maximal chain of Young diagrams
which refines the chain written in (\ref{4}), and this implies that
there is a unique standard Young tableau $\tau(d)$ of shape
$\lambda$ such that
\[\lambda^{n-2i}(\tau(d))=\lambda^{\lfloor\frac{n}{2}\rfloor-i}(d)\ \mbox{ for all $\textstyle i=0,\ldots,\lfloor\frac{n}{2}\rfloor$}.\]
Comparing the definitions of $\mathrm{Fl}_{x,\tau}(V)$ and
$\mathrm{Fl}_{x,d}(V)$ and using the uniqueness of $\tau(d)$, we
deduce that $\mathrm{Fl}_{x,\tau(d)}(V)\subset\mathrm{Fl}_{x,d}(V)$
and $\mathrm{Fl}_{x,\tau}(V)\cap\mathrm{Fl}_{x,d}(V)=\emptyset$ for
all $\tau\not=\tau(d)$, whence
$\mathrm{Fl}_{x,\tau(d)}(V)=\mathrm{Fl}_{x,d}(V)$ (by Proposition
\ref{P6}). The result now follows from the properties of the
subvariety $\mathrm{Fl}_{x,\tau(d)}(V)$ stated in Proposition
\ref{P6}.
\end{proof}

\begin{example}
\label{E1} {\rm (a)} Assume that $x=0$, so
$\lambda(x)=(1,\ldots,1)$, i.e., the Young diagram
$\lambda=\lambda(x)$ consists of one column of size $n$. Then there
is only one domino tableau $d_{n,0}$ of shape $\lambda$, namely,
\[d_{n,0}:=\mbox{\scriptsize $\young(1,1,:,:,m,m)$}\ \mbox{ if $n=2m$ is even,}\quad d_{n,0}:=\mbox{\scriptsize $\young(0,1,1,:,:,m,m)$}\ \mbox{ if $n=2m+1$ is odd.}\]
In addition, in this case
$\mathrm{Fl}_{x,d_{n,0}}(V)=\mathrm{Fl}_x(V)=\mathrm{Fl}(V)$ is the
whole flag variety.

{\rm (b)} Assume that $n=2m$ is even and $x$ is of nilpotency order
two, thus $\lambda(x)=(2^k,1^{n-2k})$, i.e., the Young diagram
$\lambda=\lambda(x)$ has two columns of sizes $n-k$ and $k$,
respectively. An example of domino tableau of shape $\lambda$ is
given by
\[d_{n,k}:=\mbox{\scriptsize $\young(11,22,::,kk,:,:,m,m)$}.\]
For this domino tableau, we claim that
\begin{equation}
\label{5}
\overline{\mathrm{Fl}_{x,d_{n,k}}(V)}=\mathcal{K}_{n,k}:=\{(V_0,\ldots,V_n)\in\mathrm{Fl}(V):\mathrm{Im}\,x\subset
V_m\subset\ker x\},
\end{equation}
which is smooth. Note that $\mathcal{K}_{n,k}$ is a closed
subvariety of $\mathrm{Fl}_x(V)$, which is irreducible and smooth
since the map $(V_0,\ldots,V_n)\mapsto V_m/\mathrm{Im}\,x$ is a
locally trivial fiber bundle of $\mathcal{K}_{n,k}$ onto the
Grassmannian variety $\{W\subset\ker x/\mathrm{Im}\,x:\dim W=m-k\}$,
whose fiber is isomorphic to the double flag variety
$\mathrm{Fl}(\mathbb{K}^m)\times \mathrm{Fl}(\mathbb{K}^m)$. Hence,
for showing (\ref{5}), it suffices to show the inclusion
$\mathrm{Fl}_{x,d_{n,k}}(V)\subset\mathcal{K}_{n,k}$. So let
$(V_0,\ldots,V_n)\in\mathrm{Fl}_{x,d_{n,k}}(V)$. Thus each subspace
$V_i$ is $x$-stable and the nilpotent endomorphism induced by $x$ on
the subquotient $V_{m+i}/V_{m-i}$ has Jordan form $(2^i)$ for all
$i\in\{0,\ldots,k\}$. On one hand this property implies
$\mathrm{rk}\,x|_{V_{m+k}/V_{m-k}}=\mathrm{rk}\,x=k$, which forces
$V_{m-k}\subset\ker x$. On the other hand we get
\[
\ker x|_{V_{m+i}/V_{m-i}}=\mathrm{Im}\,x|_{V_{m+i}/V_{m-i}}\ \mbox{
for all $i=0,\ldots,k$.}
\]
Whence $V_{m-i+1}/V_{m-i}\subset\mathrm{Im}\,x|_{V_{m+i}/V_{m-i}}$,
which yields the inclusion $V_{m-i+1}\subset \mathrm{Im}\,x+V_{m-i}$
for all $i\in\{1,\ldots,k\}$. It follows that
$V_m\subset\mathrm{Im}\,x+V_{m-k}$ and in fact, by comparing the
dimensions, we have $V_m=\mathrm{Im}\,x+V_{m-k}$. Therefore
\[\mathrm{Im}\,x\subset V_m=\mathrm{Im}\,x+V_{m-k}\subset\mathrm{Im}\,x+\ker x=\ker x.\]
The inclusion $\mathrm{Fl}_{x,d_{n,k}}(V)\subset\mathcal{K}_{n,k}$
is shown.

 {\rm (c)} Assume that $n=6$ and $\lambda(x)=(2,2,1,1)$. Then
\[d:=\mbox{\scriptsize $\young(12,12,3,3)$}\]
is an example of domino tableau for which the irreducible component
$\overline{\mathrm{Fl}_{x,d}(V)}$ of $\mathrm{Fl}_x(V)$ is singular
(see Section \ref{section-6}).
\end{example}

\subsection{Flag varieties and Springer fibers in classical cases}

\label{section-4.3}

Hereafter we assume that the space $V=\mathbb{K}^n$ is endowed with
a nondegenerate bilinear form $\omega$ which can be orthogonal or
symplectic. Let $G=G(V,\omega)\subset\mathrm{SL}_n(\mathbb{K})$ be
the subgroup of elements which preserve $\omega$. Thus $G$ is
isomorphic to $\mathrm{SO}_n(\mathbb{K})$ if $\omega$ is orthogonal,
resp., to $\mathrm{Sp}_n(\mathbb{K})$ if $\omega$ is symplectic (in
which case $n$ is necessarily even). In both cases, the Lie algebra
$\mathfrak{g}=\mathfrak{g}(V,\omega)\subset\mathfrak{sl}_n(\mathbb{K})$
consists of all endomorphisms of trace zero which are skew-adjoint
with respect to $\omega$.

A nilpotent element $x\in\mathfrak{g}$ is then a nilpotent
endomorphism of $V$ which is skew-adjoint. Its Jordan form
$\lambda(x)=\lambda=(\lambda_1,\ldots,\lambda_k)$ satisfies (see
\cite[\S 1]{Jantzen}):
\begin{itemize}
\item in the case where $\omega$ is orthogonal: for every even
number $2\ell$, the Young diagram $\lambda$ has an even number of
rows of length $2\ell$ (i.e., $|\{i:\lambda_i=2\ell\}|$ is even);
\item in the case where $\omega$ is symplectic: for every odd number
$2\ell+1$, the Young diagram $\lambda$ has an even number of rows of
length $2\ell+1$ (i.e., $|\{i:\lambda_i=2\ell+1\}|$ is even).
\end{itemize}
We say that a partition $\lambda$ (equivalently, a Young diagram) is
$\omega$-admissible whenever this parity condition holds.

Recall that $\mathcal{B}=G/B$ denotes the flag variety of $G$, where
$B\subset G$ is some (any) Borel subgroup, and
$\mathcal{B}_x\subset\mathcal{B}$ is the Springer fiber associated
to $x$ (see Section \ref{section-1.1}). It is well known that
$\mathcal{B}$ and $\mathcal{B}_x$ can be viewed as varieties of
isotropic complete flags, as follows (see, e.g., \cite{Jantzen} for
more details). Let
$\sigma=\sigma(\omega):\mathrm{Fl}(V)\to\mathrm{Fl}(V)$ be the
involution defined by letting
\[\sigma(V_0,V_1,\ldots,V_n)=(V_n^\perp,\ldots,V_1^\perp,V_0^\perp),\]
where $V_i^\perp$ stands for the orthogonal of $V_i$ with respect to
the form $\omega$. Set
\[\mathrm{Fl}(V,\omega):=(\mathrm{Fl}(V))^\sigma=\{(V_0,\ldots,V_n)\in\mathrm{Fl}(V):V_i^\perp=V_{n-i}\ \forall i=0,\ldots,n\}.\]
The property of $x$ of being skew-adjoint guarantees that
$\mathrm{Fl}_x(V)$ is $\sigma$-stable. Let
\[
\mathrm{Fl}_x(V,\omega):=(\mathrm{Fl}_x(V))^\sigma=\{(V_0,\ldots,V_n)\in\mathrm{Fl}(V,\omega):x(V_i)\subset
V_i\ \forall i=0,\ldots,n\}.
\]

\begin{proposition}
\label{P8} $G=G(V,\omega)$ acts on $\mathrm{Fl}(V,\omega)$ in a
natural way and for every
$F=(V_0,\ldots,V_n)\in\mathrm{Fl}(V,\omega)$ the subgroup
$\{g\in G:gF=F\}$
is a Borel subgroup of
$G$. Moreover:
\begin{itemize}
\item[\rm (a)] If $n$ is odd or $\omega$ is symplectic,
then $\mathrm{Fl}(V,\omega)$ is $G$-homogeneous (in particular
connected), hence isomorphic to $\mathcal{B}$; this isomorphism
restricts to a $Z_G(x)$-equivariant isomorphism between the
subvarieties $\mathrm{Fl}_x(V,\omega)$ and $\mathcal{B}_x$.
\item[\rm (b)] If $n$ is even and $\omega$ is orthogonal,
then $\mathrm{Fl}(V,\omega)$ consists of exactly two
$G$-homogenenous connected components, which are thus both
isomorphic to~$\mathcal{B}$; accordingly, $\mathrm{Fl}_x(V,\omega)$
splits into two connected components, both isomorphic to
$\mathcal{B}_x$ (through $Z_G(x)$-equivariant isomorphisms).
\end{itemize}
\end{proposition}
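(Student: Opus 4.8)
The plan is to identify $\mathrm{Fl}(V,\omega)$ (or each of its connected components) with the variety of Borel subgroups of $G$, and then to read off the statement about $\mathcal{B}_x$ by intersecting with $x$-stable flags.

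First, $G$ genuinely acts on $\mathrm{Fl}(V,\omega)$: any $g\in G$ preserves $\omega$, hence orthogonality, so $g(V_i^\perp)=(gV_i)^\perp$ and $g\cdot F\in\mathrm{Fl}(V,\omega)$ whenever $F\in\mathrm{Fl}(V,\omega)$. For $F=(V_0,\ldots,V_n)\in\mathrm{Fl}(V,\omega)$ set $B_F=\{g\in G:gF=F\}$; its Lie algebra is $\mathfrak{b}_F=\{y\in\mathfrak{g}:y(V_i)\subset V_i\ \forall i\}$. By Witt's extension theorem and the standard structure theory of classical groups (see \cite{Jantzen}), $B_F$ is a Borel subgroup of $G$; the group $G$ acts transitively on $\mathrm{Fl}(V,\omega)$ when $n$ is odd or $\omega$ is symplectic, and with exactly two orbits when $n$ is even and $\omega$ is orthogonal; each $G$-orbit is a connected component of $\mathrm{Fl}(V,\omega)$; and on each such orbit the map $F\mapsto\mathfrak{b}_F$ is a $G$-equivariant isomorphism onto the variety $\mathcal{B}$ of Borel subalgebras of $\mathfrak{g}$. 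This establishes the first assertion and the $G$-homogeneity (resp. two-component) statements in (a) and (b).

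Next, for the fixed nilpotent $x\in\mathfrak{g}$, the description of $\mathfrak{b}_F$ gives the equivalence $x\in\mathfrak{b}_F\iff x(V_i)\subset V_i\ \forall i\iff F\in\mathrm{Fl}_x(V)$. Since $\mathrm{Fl}_x(V,\omega)$ is precisely the set of $x$-stable flags in $\mathrm{Fl}(V,\omega)$, the isomorphism of the previous paragraph, restricted to a single $G$-orbit, carries the part of $\mathrm{Fl}_x(V,\omega)$ lying in that orbit isomorphically onto $\mathcal{B}_x=\{\mathfrak{b}\in\mathcal{B}:x\in\mathfrak{b}\}$. In case (a) this says directly $\mathrm{Fl}_x(V,\omega)\cong\mathcal{B}_x$. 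In case (b), writing $\mathrm{Fl}(V,\omega)=\mathcal{B}^+\sqcup\mathcal{B}^-$ for the two $G$-orbits, each of $\mathrm{Fl}_x(V,\omega)\cap\mathcal{B}^+$ and $\mathrm{Fl}_x(V,\omega)\cap\mathcal{B}^-$ is isomorphic to $\mathcal{B}_x$; these two pieces are open and closed in $\mathrm{Fl}_x(V,\omega)$, and each is nonempty and connected because $\mathcal{B}_x$ is (\cite{Spaltenstein.Topology}), so they are its connected components. For the equivariance, $Z_G(x)$ acts on $\mathrm{Fl}_x(V,\omega)$ by restriction of the $G$-action (if $gx=xg$ then $x(gV_i)=g(xV_i)\subset gV_i$) and on $\mathcal{B}_x$ by conjugation; the identity $\mathfrak{b}_{gF}=\mathrm{Ad}(g)\,\mathfrak{b}_F$ shows $F\mapsto\mathfrak{b}_F$ is $G$-equivariant, hence $Z_G(x)$-equivariant. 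In case (b), $G=\mathrm{SO}_n(\mathbb{K})$ is connected, so $Z_G(x)\subset G$ preserves each of $\mathcal{B}^{\pm}$, hence acts on each component of $\mathrm{Fl}_x(V,\omega)$, and the restricted isomorphisms onto $\mathcal{B}_x$ are $Z_G(x)$-equivariant.

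The only input beyond routine translation is the structure theory invoked in the second paragraph: that the stabilizer of a complete isotropic flag is exactly a Borel subgroup, and the count of $G$-orbits on $\mathrm{Fl}(V,\omega)$ — in particular the splitting into two orbits in the even orthogonal case, caused by the two isotropic completions (swapped by $\mathrm{O}_n\setminus\mathrm{SO}_n$) of the middle term $V_{n/2}$ of the flag. This is classical and we cite \cite{Jantzen}; I expect no essential obstacle beyond it.
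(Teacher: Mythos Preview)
Your proof is correct and follows the standard argument. Note that the paper does not actually prove Proposition~\ref{P8}: it is stated without proof as a well-known fact, with a reference to \cite{Jantzen} given in the sentence preceding the definition of $\mathrm{Fl}(V,\omega)$. Your write-up is essentially an unpacking of that citation, so there is nothing to compare; the one point the paper does expand on---nonemptiness of both pieces of $\mathrm{Fl}_x(V,\omega)$ in case~(b)---is handled in the paragraph after the proposition via the involution $\iota$, whereas you obtain it more directly from the fact that each $G$-orbit is already identified with $\mathcal{B}$ and hence its intersection with the $x$-stable flags is a copy of $\mathcal{B}_x$.
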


Let us make the statement of Proposition \ref{P8}\,{\rm (b)}
slightly more precise. Assume that $n=2m$ is even and $\omega$ is
orthogonal. For every $(m-1)$-dimensional isotropic subspace $W$,
there are exactly two Lagrangian subspaces which contain $W$ (since
it is so in the two-dimensional space $W^\perp/W$ endowed with the
restriction of $\omega$). This yields a well-defined involution
$\iota:\mathrm{Fl}(V,\omega)\to\mathrm{Fl}(V,\omega)$ given by
\begin{equation}
\label{iota} \iota:(V_0,\ldots,V_n)\mapsto
(V_0,\ldots,V_{m-1},\widetilde{V_m},V_{m+1},\ldots,V_n)
\end{equation}
where $\widetilde{V_m}$ is the unique Lagrangian subspace containing
$V_{m-1}$ (hence contained in $V_{m+1}=V_{m-1}^\perp$) such that
$\widetilde{V_m}\not=V_m$. The map $\iota$ is in fact algebraic,
$G$-equivariant, and it maps one connected component of
$\mathrm{Fl}(V,\omega)$ onto the other.

If the $(m-1)$-dimensional isotropic subspace $W$ is in addition
stable by $x$, then so are the two Lagrangian subspaces which
contain $W$ (since the endomorphism induced by $x$ on the
subquotient $W^\perp/W$ is trivial). This observation shows that the
subvariety $\mathrm{Fl}_x(V,\omega)$ is $\iota$-stable, hence it
intersects both connected components of $\mathrm{Fl}(V,\omega)$.

\subsection{Components of Springer fibers in classical cases}

\label{section-4.4}

The notation is as in Section \ref{section-4.3}. In this section, we
recall from \cite{van-Leeuwen} the parametrization of the
irreducible components of $\mathrm{Fl}_x(V,\omega)$ by so-called
admissible domino tableaux. First note that, for every
$(V_0,\ldots,V_n)\in\mathrm{Fl}_x(V)$, we have
\[V_{n-i}^\perp+x^\ell(V_i^\perp)=(V_{n-i}\cap (x^\ell)^{-1}(V_i))^\perp\ \mbox{ for all $\ell\geq 1$},\]
thus
\begin{eqnarray*}
\dim\mathrm{Im}\,x^\ell|_{V_i^\perp/V_{n-i}^\perp} & = & \dim
(V_{n-i}^\perp+x^\ell(V_i^\perp))-i \\
 & = & n-i- \dim V_{n-i}\cap (x^\ell)^{-1}(V_i) \\
 & = & n-2i-\dim\ker x^\ell|_{V_{n-i}/V_i} \\
 & = & \dim\mathrm{Im}\,x^\ell|_{V_{n-i}/V_i},
\end{eqnarray*}
hence the Jordan forms of $x|_{V_{n-i}/V_i}$ and
$x|_{V_i^\perp/V_{n-i}^\perp}$ coincide for all
$i=0,\ldots,\lfloor\frac{n}{2}\rfloor$, which ensures that the set
$\mathrm{Fl}_{x,d}(V)$ is $\sigma$-stable whenever $d$ is a domino
tableau of shape $\lambda=\lambda(x)$. We set
\begin{eqnarray*}
\mathrm{Fl}_{x,d}(V,\omega)\!\! & := &
\!\!(\mathrm{Fl}_{x,d}(V))^\sigma
\\
 & = & \textstyle \!\!\{(V_0,\ldots,V_n)\in\mathrm{Fl}_x(V,\omega):\lambda(x|_{V_i^\perp/V_i})=\lambda^{n-i}(d)\
\forall i=0,\ldots,\lfloor\frac{n}{2}\rfloor\}. \end{eqnarray*}

We say that a domino tableau $d$ is $\omega$-admissible if, for
every $i$, the shape of the subtableau formed by the boxes of number
$\leq i$ is an $\omega$-admissible Young diagram, in the sense of
Section \ref{section-4.3}.

For $(V_0,\ldots,V_n)\in\mathrm{Fl}_x(V,\omega)$ and every $i$,
the nilpotent endomorphism $x|_{V_i^\perp/V_i}$ is skew-adjoint with
respect to the form induced by $\omega$ on the subquotient
$V_i^\perp/V_i$, hence the Young diagram
$\lambda(x|_{V_i^\perp/V_i})$ is $\omega$-admissible (see Section
\ref{section-4.3}). Thus
\[\mathrm{Fl}_{x,d}(V,\omega)\not=\emptyset\ \mbox{ only if $d$ is $\omega$-admissible}.\]
In fact, letting $i$ run over
$\{0,\ldots,\lfloor\frac{n}{2}\rfloor\}$, we get a chain of
$\omega$-admissible Young diagrams
\[\lambda(x|_{V_{\lfloor\frac{n}{2}\rfloor}^\perp/V_{\lfloor\frac{n}{2}\rfloor}})\subset\ldots\subset\lambda(x|_{V_1^\perp/V_1})\subset\lambda(x|_{V_0^\perp/V_0})\]
and the $\omega$-admissibility condition implies that any two
consecutive diagrams in this chain differ by a couple of adjacent
boxes. Hence there is a domino tableau $d$ such that
$(V_0,\ldots,V_n)\in\mathrm{Fl}_{x,d}(V,\omega)$. Therefore,
\begin{equation}
\label{partition}
\mathrm{Fl}_x(V,\omega)=\bigsqcup_d\mathrm{Fl}_{x,d}(V,\omega)
\end{equation} where the union is taken over all $\omega$-admissible domino
tableaux.

\begin{proposition}[\protect{\cite{van-Leeuwen}}]
\label{P9}
Whenever $d$ is $\omega$-admissible, the subset
$\mathrm{Fl}_{x,d}(V,\omega)$ is non\-empty, locally closed,
equidimensional, of the same dimension as $\mathrm{Fl}_x(V,\omega)$.
Thus, each irreducible component of
$\overline{\mathrm{Fl}_{x,d}(V,\omega)}$ is an irreducible component
of $\mathrm{Fl}_x(V,\omega)$, and every component of
$\mathrm{Fl}_x(V,\omega)$ can be obtained in this way.
\end{proposition}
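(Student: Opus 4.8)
\emph{The plan.} Everything will be transferred from the ambient type~$\mathrm{A}$ stratum $\mathrm{Fl}_{x,d}(V)$, about which a great deal is already known. Recall from the proof of the proposition of Section~\ref{section-4.2} that $\mathrm{Fl}_{x,d}(V)=\mathrm{Fl}_{x,\tau(d)}(V)$ for the standard Young tableau $\tau(d)$ refining the chain \eqref{4}; hence by Proposition~\ref{P6} the variety $\mathrm{Fl}_{x,d}(V)$ is nonempty, locally closed, irreducible, smooth, and of dimension $\dim\mathrm{Fl}_x(V)$. By definition $\mathrm{Fl}_{x,d}(V,\omega)=(\mathrm{Fl}_{x,d}(V))^\sigma$ is the fixed-point locus of the order-two — hence finite, hence linearly reductive — group $\langle\sigma\rangle$ acting algebraically on $\mathrm{Fl}_{x,d}(V)$. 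As the fixed locus of a finite group it is closed in $\mathrm{Fl}_{x,d}(V)$, so locally closed in $\mathrm{Fl}(V)$; as the fixed locus of a linearly reductive group acting on a smooth variety it is smooth, by Proposition~\ref{P3}, in particular equidimensional on each connected component. This settles local closedness and smoothness.

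\emph{Nonemptiness.} I would induct on $n$, the cases $n\le 1$ being immediate. Let $d$ be $\omega$-admissible of shape $\lambda(x)$. Its outermost (largest-numbered) domino is a legal reduction: $\mu:=\lambda^{\lfloor n/2\rfloor-1}(d)$ is an $\omega$-admissible diagram obtained from $\lambda(x)$ by deleting an adjacent pair of boxes. The substantive point — which is essentially the relevant part of \cite{van-Leeuwen} — is that such a reduction is realized geometrically: there is an $x$-stable isotropic line $L\subset V$ with $\lambda(x|_{L^\perp/L})=\mu$, which one produces from an $\omega$-adapted Jordan basis of $x$. Then $\omega$ and $x$ descend to $L^\perp/L$ (of dimension $n-2$, with induced Jordan type $\mu$), and $d$ with its outermost domino removed is an $\omega$-admissible domino tableau of shape $\mu$; by the inductive hypothesis the corresponding domino stratum of the Springer fiber of $L^\perp/L$ is nonempty, and pulling a flag from it back along $0\subset L\subset L^\perp\subset V$ yields an element of $\mathrm{Fl}_{x,d}(V,\omega)$.

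\emph{The dimension — the main obstacle.} One must show that every irreducible component of $\mathrm{Fl}_{x,d}(V,\omega)$ has dimension exactly $\dim\mathrm{Fl}_x(V,\omega)$. The inequality ``$\le$'' is immediate, since $\mathrm{Fl}_{x,d}(V,\omega)$ is locally closed in $\mathrm{Fl}_x(V,\omega)$, which is equidimensional of dimension $\dim\mathcal{B}_x$ (Section~\ref{section-1.1}, Proposition~\ref{P8}). For ``$\ge$'' my approach would be to use the iterated fiber-bundle description of $\mathrm{Fl}_{x,\tau(d)}(V)$ from \cite{Fresse-2009}: the involution $\sigma$ is compatible with this tower of bundles, so $\mathrm{Fl}_{x,d}(V,\omega)$ inherits an iterated bundle structure whose successive fibers are the $\sigma$-fixed points of the type~$\mathrm{A}$ fibers (Grassmannians, resp.\ affine spaces), i.e.\ isotropic Grassmannians, resp.\ affine spaces; summing the resulting fiber dimensions and matching the total against the combinatorial formula for $\dim\mathcal{B}_x$ in terms of $\lambda(x)$ gives equality on every component. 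An alternative is a tangent-space computation, using $T_F\mathrm{Fl}_{x,d}(V,\omega)=(T_F\mathrm{Fl}_{x,\tau(d)}(V))^\sigma$ together with an explicit description of the $\sigma$-action on the (known) space $T_F\mathrm{Fl}_{x,\tau(d)}(V)$. Either way, this is where van Leeuwen's combinatorics of admissible domino tableaux genuinely enters, and I expect it to be the bulk of the work.

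\emph{Conclusion.} The last two assertions then follow formally. By \eqref{partition}, $\mathrm{Fl}_x(V,\omega)$ is the finite disjoint union of the locally closed subsets $\mathrm{Fl}_{x,d}(V,\omega)$, all equidimensional of dimension $\dim\mathrm{Fl}_x(V,\omega)$. Hence every irreducible component of $\overline{\mathrm{Fl}_{x,d}(V,\omega)}$ is a closed irreducible subset of $\mathrm{Fl}_x(V,\omega)$ of top dimension, so it is a component of $\mathrm{Fl}_x(V,\omega)$; conversely, any component $C$ of $\mathrm{Fl}_x(V,\omega)$ meets some $\mathrm{Fl}_{x,d}(V,\omega)$ in a dense open subset of $C$, and that intersection, being open in $\mathrm{Fl}_{x,d}(V,\omega)$, is a union of components of the latter, so $C$ is the closure of one of them. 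Thus every component of $\mathrm{Fl}_x(V,\omega)$ arises in the stated way.
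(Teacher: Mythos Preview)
The paper does not prove Proposition~\ref{P9}: it is imported wholesale from \cite{van-Leeuwen}, with no argument given beyond the citation and a remark that van Leeuwen's results are in fact more precise. So there is nothing to compare against; the paper treats this as a black box.

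Your outline is a reasonable reduction of the statement to its technical core, and you correctly identify that core as residing in \cite{van-Leeuwen}. The easy parts --- local closedness of $\mathrm{Fl}_{x,d}(V,\omega)=(\mathrm{Fl}_{x,d}(V))^\sigma$, smoothness via Proposition~\ref{P3}, and the formal derivation of the last two sentences from equidimensionality together with the partition \eqref{partition} --- are all fine and in fact recover slightly more than the proposition states (smoothness rather than mere equidimensionality). For nonemptiness and for the dimension you explicitly defer the substantive step to van Leeuwen, so your proposal is less an independent proof than a roadmap confirming that the statement follows once those two ingredients are granted; this is exactly how the paper uses the result.

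Two cautions on the details. First, smoothness alone does not yield equidimensionality across components (a disjoint union of a point and a curve is smooth), so your opening paragraph does not settle equidimensionality even ``on each connected component'' in any way that helps until the dimension is actually computed; you recognize this later, but the phrasing in the first paragraph overstates what has been achieved. Second, the claim that $\sigma$ is ``compatible with the tower of bundles'' from \cite{Fresse-2009} needs care: with the symmetric double sequence $(a_i,b_i)$, the involution $\sigma$ does not act on each individual step of the tower but rather intertwines the step that decreases $b$ with the step that increases $a$ (for an isotropic flag, choosing $V_j$ forces $V_{n-j}=V_j^\perp$). So the fixed-point tower collapses pairs of type~A steps into single isotropic steps, and checking that the resulting fibers are what you expect --- and that their dimensions sum to $\dim\mathcal{B}_x$ for every $\omega$-admissible $d$ --- is precisely the computation carried out in \cite{van-Leeuwen}.
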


The results in \cite{van-Leeuwen} are in fact much more precise:
they include an explicit parametrization of the irreducible
components of each $\overline{\mathrm{Fl}_{x,d}(V,\omega)}$, in
terms of some equivalence classes of so-called signed domino
tableaux. In particular, the action of the group $Z_G(x)$ on this
set of components is explicitly described (note that
$\mathrm{Fl}_{x,d}(V,\omega)$ and so
$\overline{\mathrm{Fl}_{x,d}(V,\omega)}$ are $Z_G(x)$-stable).

We also mention that a combinatorial parametrization of the orbital
varieties of the nilpotent $G$-orbit $\mathcal{O}_x$ in terms of
domino tableaux is shown in \cite{McGovern}. This parametrization is
somewhat different from the one in \cite{van-Leeuwen} (i.e., the
correspondence of Proposition \ref{P1} does not match the two
parametrizations) since it involves unsigned domino tableaux, which
are not necessarily $\omega$-admissible in the above sense. See also
\cite{Pietraho} for a comparison of these two parametrizations.


\medskip

In light of the facts described in this section, we make the
following conclusion:

\begin{proposition}
\label{P10} To prove Theorem \ref{T1}, it suffices to show that,
for every nilpotent element
$x\in\mathfrak{g}=\mathfrak{g}(V,\omega)$, there is an
$\omega$-admissible domino tableau $d$ of shape $\lambda(x)$ such
that
\begin{itemize}
\item[\rm (a)] the irreducible component
$\overline{\mathrm{Fl}_{x,d}(V)}$ of $\mathrm{Fl}_x(V)$ is smooth;
\item[\rm (b)] the subvariety
$\overline{\mathrm{Fl}_{x,d}(V,\omega)}$ consists of one irreducible
component if $n$ is odd or $\omega$ is symplectic, respectively of
two irreducible components if $n$ is even and $\omega$ is
orthogonal.
\end{itemize}
\end{proposition}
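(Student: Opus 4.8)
The plan is to start from a domino tableau $d$ as in the statement and to extract from it the smooth, $Z_G(x)$-stable irreducible component of $\mathcal{B}_x$ required by Theorem \ref{T1}\,{\rm (b)}; part {\rm (a)} of Theorem \ref{T1} then follows from Proposition \ref{P1}\,{\rm (b)} (applied orbit by orbit), while the case $G=\mathrm{SL}_n(\mathbb{K})$ has already been settled in Section \ref{section-SLn}. So fix a nilpotent $x\in\mathfrak{g}(V,\omega)$ and a tableau $d$ of shape $\lambda(x)$ satisfying {\rm (a)} and {\rm (b)}.

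First I would feed hypothesis {\rm (a)} into Proposition \ref{P3}. The set $\mathrm{Fl}_{x,d}(V)$ is $\sigma$-stable (as observed at the beginning of Section \ref{section-4.4}), hence so is its closure $Y_0:=\overline{\mathrm{Fl}_{x,d}(V)}$, which is smooth by {\rm (a)}. Applying Proposition \ref{P3} to the finite group $\langle\sigma\rangle$ acting on $Y_0$, the fixed-point set $Y:=Y_0^\sigma$ is smooth, hence a disjoint union of irreducible (= connected) components; moreover $Y\subseteq(\mathrm{Fl}_x(V))^\sigma=\mathrm{Fl}_x(V,\omega)$. Since $\mathrm{Fl}_{x,d}(V,\omega)=\mathrm{Fl}_{x,d}(V)^\sigma$ consists of $\sigma$-fixed points of $Y_0$, it is contained in $Y$, and as $Y$ is closed we obtain $\overline{\mathrm{Fl}_{x,d}(V,\omega)}\subseteq Y$.

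Next I would run a dimension count forcing the irreducible components of $\overline{\mathrm{Fl}_{x,d}(V,\omega)}$ to coincide with the connected components of $Y$ that contain them. By Proposition \ref{P8}, every connected component of $\mathrm{Fl}_x(V,\omega)$ is isomorphic to $\mathcal{B}_x$, which is equidimensional (Section \ref{section-1.1}); hence every irreducible closed subset of $\mathrm{Fl}_x(V,\omega)$ — in particular every connected component of $Y$ — has dimension at most $\dim\mathcal{B}_x$. On the other hand, Proposition \ref{P9} gives that each irreducible component $C$ of $\overline{\mathrm{Fl}_{x,d}(V,\omega)}$ has dimension $\dim\mathrm{Fl}_x(V,\omega)=\dim\mathcal{B}_x$ and is itself an irreducible component of $\mathrm{Fl}_x(V,\omega)$. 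Since $C$ lies in some connected component $S$ of $Y$ with $\dim S\leq\dim\mathcal{B}_x=\dim C$, we get $C=S$; in particular $C$ is smooth.

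Finally I would invoke hypothesis {\rm (b)} to obtain a $Z_G(x)$-stable such $C$ and transport it to $\mathcal{B}_x$. If $n$ is odd or $\omega$ is symplectic, then by {\rm (b)} the variety $\overline{\mathrm{Fl}_{x,d}(V,\omega)}$ is a single irreducible component; it is $Z_G(x)$-stable (as recalled after Proposition \ref{P9}), smooth by the previous step, and under the $Z_G(x)$-equivariant isomorphism $\mathrm{Fl}_x(V,\omega)\cong\mathcal{B}_x$ of Proposition \ref{P8}\,{\rm (a)} it yields a smooth $Z_G(x)$-stable component of $\mathcal{B}_x$. If $n$ is even and $\omega$ is orthogonal, {\rm (b)} gives $\overline{\mathrm{Fl}_{x,d}(V,\omega)}=C_1\cup C_2$ with $C_1,C_2$ its (smooth) irreducible components; since the conditions defining $\mathrm{Fl}_{x,d}(V,\omega)$ only involve the subspaces $V_i,V_i^\perp$ with $i\neq m$ nontrivially, this variety is $\iota$-stable for the involution (\ref{iota}), and as $\iota$ exchanges the two connected components $A,A'$ of $\mathrm{Fl}_x(V,\omega)$ it must exchange $C_1$ and $C_2$ (each being connected, hence contained in one of $A,A'$); after relabelling, $C_1\subseteq A$ and $C_2\subseteq A'$ are disjoint. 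As $G$ is connected, $Z_G(x)\subseteq G$ preserves $A$ and $A'$; since it stabilizes $C_1\cup C_2$ and permutes $\{C_1,C_2\}$, it fixes each $C_i$, and the $Z_G(x)$-equivariant isomorphism $A\cong\mathcal{B}_x$ of Proposition \ref{P8}\,{\rm (b)} sends $C_1$ to a smooth $Z_G(x)$-stable component of $\mathcal{B}_x$. The one genuinely delicate point is the reconciliation in the third paragraph: the smoothness delivered by Proposition \ref{P3} lives on the $\sigma$-fixed locus of a type-A component, whereas the components of the classical Springer fiber are governed by Proposition \ref{P9}, and it is precisely the equidimensionality in Proposition \ref{P9} that identifies the relevant components of $\overline{\mathrm{Fl}_{x,d}(V,\omega)}$ with connected components of the smooth variety $Y$; the remaining $\iota$/$Z_G(x)$ bookkeeping in the even orthogonal case is routine, and of course the existence of a tableau $d$ realizing {\rm (a)}--{\rm (b)} is the substance of the later sections, not of this reduction.
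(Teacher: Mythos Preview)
Your argument is correct and follows essentially the same route as the paper's proof: apply Proposition~\ref{P3} to the $\sigma$-stable smooth type-A component $\overline{\mathrm{Fl}_{x,d}(V)}$, then use Proposition~\ref{P9} to identify the irreducible components of $\overline{\mathrm{Fl}_{x,d}(V,\omega)}$ with connected components of the smooth fixed locus, and finally sort out the $\iota$- and $Z_G(x)$-bookkeeping in the even orthogonal case via Proposition~\ref{P8}. The only cosmetic difference is that where the paper argues directly that the components of $\overline{\mathrm{Fl}_{x,d}(V,\omega)}$, being irreducible components of $\mathrm{Fl}_x(V,\omega)$, are automatically irreducible components of the intermediate set $(\overline{\mathrm{Fl}_{x,d}(V)})^\sigma$, you reach the same conclusion by an explicit dimension comparison; the content is identical.
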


\begin{proof}
Assume that we have found $d$ satisfying conditions {\rm (a)} and
{\rm (b)}. Since the subset $\mathrm{Fl}_{x,d}(V)$ is stable by the
involution $\sigma$, so is its closure
$\overline{\mathrm{Fl}_{x,d}(V)}$. Proposition \ref{P3}, applied to
the variety $X=\overline{\mathrm{Fl}_{x,d}(V)}$ (which is smooth, by
{\rm (a)}) and the group $H=\{\mathrm{id},\sigma\}$, implies that
the fixed-point subvariety
$(\overline{\mathrm{Fl}_{x,d}(V)})^\sigma$ is smooth.

The inclusions
\[\overline{\mathrm{Fl}_{x,d}(V,\omega)}=\overline{(\mathrm{Fl}_{x,d}(V))^\sigma}\subset(\overline{\mathrm{Fl}_{x,d}(V)})^\sigma\subset(\mathrm{Fl}_x(V))^\sigma=\mathrm{Fl}_x(V,\omega)\]
combined with Proposition \ref{P9} show that $\overline{\mathrm{Fl}_{x,d}(V,\omega)}$ is a union of irreducible components of $(\overline{\mathrm{Fl}_{x,d}(V)})^\sigma$.
Hence the components of $\overline{\mathrm{Fl}_{x,d}(V,\omega)}$ are pairwise disjoint and smooth.

As already noted, the subvariety $\overline{\mathrm{Fl}_{x,d}(V,\omega)}$ (as well as its set of components) is $Z_G(x)$-stable.

In the case where $n$ is odd or $\omega$ is symplectic,
condition {\rm (b)} implies that $\overline{\mathrm{Fl}_{x,d}(V,\omega)}$
is a smooth, $Z_G(x)$-stable irreducible component of $\mathrm{Fl}_x(V,\omega)$.

In the case where $n$ is even and $\omega$ is orthogonal,
the variety $\mathrm{Fl}_x(V,\omega)$ itself consists of two $Z_G(x)$-stable connected components
$C_1$ and $C_2$
(see Proposition \ref{P8}). Moreover $\mathrm{Fl}_x(V,\omega)$ is endowed with the involution
$\iota$ defined in (\ref{iota}), which is such that $\iota(C_1)=C_2$.
The definition of $\mathrm{Fl}_{x,d}(V,\omega)$ guarantees that
$\mathrm{Fl}_{x,d}(V,\omega)$ and so $\overline{\mathrm{Fl}_{x,d}(V,\omega)}$ are $\iota$-stable.
Condition (b) then implies that $\overline{\mathrm{Fl}_{x,d}(V,\omega)}$
has exactly two smooth, $Z_G(x)$-stable components, namely $\overline{\mathrm{Fl}_{x,d}(V,\omega)}\cap C_1$ and $\overline{\mathrm{Fl}_{x,d}(V,\omega)}\cap C_2$.

In both cases,
Propositions \ref{P8} and \ref{P9} allow us to conclude that the Springer fiber $\mathcal{B}_x$ has a smooth, $Z_G(x)$-stable irreducible component, as claimed in Theorem \ref{T1}.
\end{proof}


\section{Construction of a smooth component of $\mathrm{Fl}_x(V)$}

\label{section-5}

\subsection{A combinatorial construction}

\label{section-5.1}

Let $n_1,n_2$ be positive integers, with $n_2$ even. Let $\lambda^1$
and $\lambda^2$ be partitions of $n_1$ and $n_2$, respectively, seen
as Young diagrams. We make the following assumption:
\[\mbox{(length of the last column of $\lambda^1$)}\geq\mbox{(length of the first column of $\lambda^2$).}\]
Under this assumption, if $d_1$ and $d_2$ are domino tableaux of
respective shapes $\lambda^1$ and $\lambda^2$, we can define a
domino tableau $d$ obtained by concatenation. Specifically, denoting
by $\ell_1$ and $\ell_2$ the number of columns of $d_1$ and $d_2$
respectively, $d=:d_1+d_2$ is the domino tableau such that:
\begin{itemize}
\item for every $i\in\{1,\ldots,\ell_1\}$, the $i$-th column of $d$
coincides with the $i$-th column of $d_1$;
\item for every $i\in\{\ell_1+1,\ldots,\ell_1+\ell_2\}$, the $i$-th
column of $d$ is obtained from the $(i-\ell_1)$-th column of $d_2$
by adding $\lfloor\frac{n_1}{2}\rfloor$ to each box number.
\end{itemize}
In particular, the shape of $d$ is the Young diagram
$\lambda=:\lambda^1+\lambda^2$ of size $n_1+n_2$ obtained by
juxtaposing the two diagrams $\lambda^1$ and $\lambda^2$.

\begin{example}
$\mbox{\scriptsize $\young(011,235,235,466,4)$}+\mbox{\scriptsize
$\young(11,22,3,3)$}=\mbox{\scriptsize
$\young(01177,23588,2359,4669,4)$}$.
\end{example}

In Section \ref{section-4.4}, we consider the following parity
conditions for domino tableaux:
\begin{description}
\item[\rm (O) (resp., (S))] for every $i$ and $\ell$, the subtableau formed by
the boxes of number $\leq i$ has an even number of rows of length
$2\ell$ (resp., $2\ell+1$).
\end{description}
In other words, $d$ satisfies (O) (resp., (S)) if and only if it is
$\omega$-admissible for an orthogonal (resp., a symplectic) form
$\omega$, in the sense of Section \ref{section-4.4}.

\begin{lemma}
\label{L1} Let $d_1,d_2$ be two domino tableaux whose concatenation
$d:=d_1+d_2$ is well defined. Assume that $d_2$ satisfies {\rm (S)}.
Then:
\begin{itemize}
\item[\rm (a)] if $d_1$ satisfies {\rm (O)} and has an odd number of
columns, then $d$ satisfies {\rm (O)};
\item[\rm (b)] if $d_1$ satisfies {\rm (S)} and has an even number of
columns, then $d$ satisfies {\rm (S)}.
\end{itemize}
\end{lemma}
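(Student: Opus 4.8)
The plan is to analyze, for each threshold $i$, how the subtableau $d^{\leq i}$ of $d$ formed by the boxes of number $\leq i$ decomposes into pieces coming from $d_1$ and $d_2$. Write $N_1 = \lfloor n_1/2 \rfloor$. By construction of the concatenation $d = d_1 + d_2$, a box of $d$ has number $\leq i$ if and only if either it lies in the $d_1$-part and has $d_1$-number $\leq \min(i, N_1)$, or it lies in the $d_2$-part (shifted columns) and has $d_2$-number $\leq i - N_1$ (when $i > N_1$). Hence for $i \leq N_1$ the shape $\lambda^i(d)$ equals $\lambda^i(d_1)$, and for $i > N_1$ the shape $\lambda^i(d)$ is obtained by juxtaposing the full diagram $\lambda^1$ (at full width $\ell_1$ columns) with the partial shape $\lambda^{i-N_1}(d_2)$. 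The rows of $\lambda^i(d)$ therefore come in two groups: the rows of $\lambda^{i-N_1}(d_2)$ sitting in columns $\ell_1+1,\ldots$, and the rows of $\lambda^1$; but a row $r$ of $\lambda^1$ that still has a box in column $\ell_1$ may be \emph{extended} by the row of $\lambda^{i-N_1}(d_2)$ directly to its right, so its effective length in $\lambda^i(d)$ is $\ell_1 + (\text{length of that } d_2\text{-row})$.

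Next I would count rows of a given length $m$ in $\lambda^i(d)$. First take $i \le N_1$: then $\lambda^i(d) = \lambda^i(d_1)$, so the row-length multiplicities of $\lambda^i(d)$ agree with those of $\lambda^i(d_1)$, and the hypothesis on $d_1$ (either (O) or (S)) directly gives the required parity. Now take $i > N_1$. The rows of $\lambda^i(d)$ of length $m$ split as: (i) rows entirely inside the $d_2$-block, i.e. rows $r$ where $\lambda^1$ has fewer than $\ell_1$ columns in row $r$ and the $(r - \lambda^{1}_{\mathrm{col}\ell_1})$-th row (appropriate re-indexing) of $\lambda^{i-N_1}(d_2)$ has length $m$; these match rows of length $m$ in $\lambda^{i-N_1}(d_2)$ lying below the first column of $\lambda^2$; (ii) rows $r$ of $\lambda^1$ whose row in $\lambda^{i-N_1}(d_2)$ has length $m - \ell_1$ (with $m - \ell_1 \ge 0$, interpreting length $0$ as "no extension"). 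The key observation is the column-height condition imposed in Section \ref{section-5.1}: the last column of $\lambda^1$ is at least as long as the first column of $\lambda^2$, so \emph{every} row of $\lambda^2$ (hence of every $\lambda^{j}(d_2)$) lies strictly to the right of column $\ell_1$ and is attached to a genuine row of $\lambda^1$. Consequently case (i) is empty: every row of $\lambda^{i-N_1}(d_2)$ of length $\mu$ contributes a row of $\lambda^i(d)$ of length exactly $\ell_1 + \mu$, and rows of $\lambda^1$ not reaching column $\ell_1$ are never extended (they keep their original lengths $< \ell_1$).

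So the multiset of row-lengths of $\lambda^i(d)$, for $i > N_1$, is: $\{\text{row-lengths of }\lambda^1 \text{ of size} < \ell_1\} \ \sqcup\ \{\ell_1 + \mu : \mu \text{ a row-length of } \lambda^{i-N_1}(d_2)\}$, where the second set uses that each of the $\ell_1'$ rows of $\lambda^1$ of full length $\ell_1$ gets extended by the corresponding row of $\lambda^{i-N_1}(d_2)$ (padding with $\mu = 0$ for rows of $\lambda^1$ beyond the support of $\lambda^{i-N_1}(d_2)$ — but by the column-height condition all such extra rows of $\lambda^1$ have length $<\ell_1$ anyway, so they fall in the first set). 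Now I verify the parity: in case (a), $\ell_1$ is odd. A row of $\lambda^i(d)$ of even length $2\ell$ is either a short row of $\lambda^1$ of even length $2\ell < \ell_1$ — and $d_1$ satisfies (O), so the number of such is even — or has the form $\ell_1 + \mu = 2\ell$ with $\mu = 2\ell - \ell_1$ \emph{odd}; by (S) for $d_2$ the number of rows of $\lambda^{i-N_1}(d_2)$ of odd length is even. Summing two even quantities gives an even total, so $d$ satisfies (O). In case (b), $\ell_1$ is even: a row of $\lambda^i(d)$ of odd length $2\ell+1$ is a short row of $\lambda^1$ of odd length (even count, by (S) for $d_1$), or $\ell_1 + \mu$ with $\mu$ odd (even count, by (S) for $d_2$), again an even total, so $d$ satisfies (S). The main obstacle — and the only genuinely delicate point — is the bookkeeping in the previous paragraph showing that the column-height hypothesis forces every $d_2$-row to be glued onto a full-length $d_1$-row, so that no new row of length $<\ell_1$ is created and the two parity counts remain independent; once that is pinned down, the parity arithmetic is immediate. $\qed$
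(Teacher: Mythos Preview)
Your argument is correct and follows the same route as the paper's proof: both decompose the rows of $\lambda^i(d)$ into those coming from $d_1$ and those obtained by extending a full-length row of $\lambda^1$ by a row of $\lambda^{i-N_1}(d_2)$, then exploit the parity of $\ell_1$ so that a row of the target parity in $d$ is either a row of $d_1$ of that parity or $\ell_1$ plus an odd-length row of $d_2$. The paper compresses this into the single identity $n(d,i,\ell)=n(d_1,i,\ell)+n(d_2,i-\lfloor n_1/2\rfloor,\ell-\ell_1)$ for $\ell\neq\ell_1$; one small slip in your exposition is the parenthetical claiming that full-length rows of $\lambda^1$ beyond the support of $\lambda^{i-N_1}(d_2)$ have length $<\ell_1$ --- in fact they have length exactly $\ell_1$ --- but this is harmless since $\ell_1$ has the opposite parity to the lengths being counted in either case.
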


\begin{proof}
As above, we respectively denote by $n_1$ and $\ell_1$ the number of
boxes and the number of columns in $d_1$. The definition of
$d=d_1+d_2$ implies that
\[\textstyle n(d,i,\ell)=n(d_1,i,\ell)+n(d_2,i-\lfloor\frac{n_1}{2}\rfloor,\ell-\ell_1)\ \mbox{ for all $i\geq 0$, all $\ell\not=\ell_1$},\]
where $n(\cdot,i,\ell)$ stands for the number of rows of length
$\ell$ in the subtableau formed by the boxes of number $\leq i$. In
the situation {\rm (a)} of the statement, for every $i\geq 0$ and
every even integer $\ell$, the number $\ell-\ell_1$ is odd (so in
particular $\ell\not=\ell_1$), the numbers $n(d_1,i,\ell)$ and
$n(d_2,i-\lfloor\frac{n_1}{2}\rfloor,\ell-\ell_1)$ are even (since
$d_1$ and $d_2$ satisfy (O) and (S), respectively), hence the above
equality implies that $n(d,i,\ell)$ is even. This shows that $d$
satisfies (O). In {\rm (b)}, for every $i\geq 0$ and every odd
integer $\ell$, the number $\ell-\ell_1$ is still odd,
$n(d_1,i,\ell)$ and
$n(d_2,i-\lfloor\frac{n_1}{2}\rfloor,\ell-\ell_1)$ are even (since
$d_1$ and $d_2$ satisfy (S)), hence $n(d,i,\ell)$ is even. Therefore
$d$ satisfies (S).
\end{proof}

As in Sections \ref{section-4.3}--\ref{section-4.4}, the space
$V=\mathbb{K}^n$ is endowed with the form $\omega$ (orthogonal or
symplectic), $x\in\mathfrak{g}=\mathfrak{g}(V,\omega)$ is a
nilpotent element, i.e., a nilpotent endomorphism of $V$ which is
skew adjoint with respect to $\omega$, and $\lambda(x)\vdash n$
stands for the Jordan form of $x$, seen as a Young diagram. In
particular this Young diagram is $\omega$-admissible, in the sense
of Section \ref{section-4.3}. Here we construct an
$\omega$-admissible domino tableau $d_x^\omega$ of shape
$\lambda(x)$, that is, a domino tableau satisfying (O) if $\omega$
is orthogonal, respectively (S) if $\omega$ is symplectic.

\begin{notation}
\label{N1} Let $\lambda_1^*\geq \ldots\geq \lambda_\ell^*$ be the
lengths of the columns of $\lambda(x)$. Up to adding a term
$\lambda_{\ell+1}^*$ equal to zero, we may assume that $\ell$ is odd
when $\omega$ is orthogonal and $\ell$ is even when $\omega$ is
symplectic (hence $\lambda_{\ell'}^*\geq 1$ for all
$\ell'\not=\ell$, $\lambda_\ell^*\geq 0$). Recall the domino
tableaux $d_{n,k}$ introduced in Example \ref{E1}\,{\rm (a)}--{\rm
(b)} and note that $d_{n,k}$ satisfies {\rm (S)} whenever $n$ is
even and $k\geq 0$ while $d_{n,0}$ satisfies {\rm (O)} for all $n$.

\begin{itemize}
\item First assume that $\omega$ is orthogonal. Thus $\ell$ is odd, say $\ell=2k+1$. We view $\lambda(x)$ as the juxtaposition of a diagram with one
column of length $\lambda_1^*$ and $k$ diagrams with two columns of
lengths
$(\lambda_2^*,\lambda_3^*),\ldots,(\lambda_{2k}^*,\lambda_{2k+1}^*)$,
respectively. The $\omega$-admissibility of $\lambda(x)$ guarantees
that $\lambda_{2j}^*-\lambda_{2j+1}^*$ and so
$\lambda_{2j}^*+\lambda_{2j+1}^*$ are even for all $j$. Set
\begin{equation}
\label{7}
d_x^\omega=d_{\lambda_1^*,0}+d_{\lambda_2^*+\lambda_3^*,\lambda_3^*}+\ldots+d_{\lambda_{\ell-1}^*+\lambda_\ell^*,\lambda_\ell^*}
\end{equation}
(note that the concatenation procedure is clearly associative
whenever it is well defined). By Lemma \ref{L1}, the domino tableau
$d_x^\omega$ is $\omega$-admissible.
\item Next assume that $\omega$ is symplectic. Thus $\ell$ is even, say $\ell=2k$. Here we view $\lambda(x)$ as the juxtaposition of $k$ Young
diagrams with two columns of lengths
$(\lambda_1^*,\lambda_2^*),\ldots,(\lambda_{2k-1}^*,\lambda_{2k}^*)$,
respectively. Since $\lambda(x)$ is $\omega$-admissible, the numbers
$\lambda_{2j-1}^*-\lambda_{2j}^*$ and
$\lambda_{2j-1}^*+\lambda_{2j}^*$ are even for all $j$. Set
\begin{equation}
\label{8}
d_x^\omega=d_{\lambda_1^*+\lambda_2^*,\lambda_2^*}+\ldots+d_{\lambda_{\ell-1}^*+\lambda_\ell^*,\lambda_\ell^*}.
\end{equation}
Lemma \ref{L1} ensures that the so-obtained domino tableau
$d_x^\omega$ is $\omega$-admissible.
\end{itemize}
\end{notation}

\begin{example}
{\rm (a)} Assume that $\omega$ is orthogonal and
$\lambda(x)=(5,4^2,2^2)=\mbox{\scriptsize $\yng(5,4,4,2,2)$}$. We
write $\lambda(x)=\mbox{\scriptsize
$\yng(1,1,1,1,1)$}+\mbox{\scriptsize
$\yng(2,2,2,1,1)$}+\mbox{\scriptsize $\yng(2,1,1)$}$ and get
$d_x^\omega=\mbox{\scriptsize $\young(0,1,1,2,2)$}+\mbox{\scriptsize
$\young(11,22,33,4,4)$}+\mbox{\scriptsize
$\young(11,2,2)$}=\mbox{\scriptsize
$\young(03377,1448,1558,26,26)$}$.

{\rm (b)} Assume now that $\omega$ is symplectic and
$\lambda(x)=(5,5,4,1,1)=\mbox{\scriptsize $\yng(5,5,4,1,1)$}$, i.e.,
$\lambda(x)=\mbox{\scriptsize $\yng(2,2,2,1,1)$}+\mbox{\scriptsize
$\yng(2,2,2)$}+\mbox{\scriptsize $\yng(1,1)$}$\,. Thus
$d_x^\omega=\mbox{\scriptsize
$\young(11,22,33,4,4)$}+\mbox{\scriptsize
$\young(11,22,33)$}+\mbox{\scriptsize
$\young(1,1)$}=\mbox{\scriptsize $\young(11558,22668,3377,4,4)$}$ in
this case.
\end{example}

Following the terminology of \cite[\S3.3]{van-Leeuwen}, the domino
tableau $d_x^\omega$ introduced in Notation \ref{N1} always consists of a
single cluster. By \cite[Lemma
3.3.3]{van-Leeuwen}, this yields:

\begin{proposition}
\label{P11} As above, $x\in\mathfrak{g}(V,\omega)$ is a nilpotent
element. Let $d_x^\omega$ be as in (\ref{7}) or (\ref{8}) depending
on whether $\omega$ is orthogonal or symplectic.
\begin{itemize}
\item[\rm (a)]
If $\omega$ is symplectic or $n$ is odd, then the variety
$\overline{\mathrm{Fl}_{x,d_x^\omega}(V,\omega)}$ is irreducible.
\item[\rm (b)]
If $\omega$ is orthogonal and $n$ is even, then
$\overline{\mathrm{Fl}_{x,d_x^\omega}(V,\omega)}$ has exactly two
irreducible components.
\end{itemize}
\end{proposition}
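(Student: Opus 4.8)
The plan is to deduce both parts from van Leeuwen's refinement of Proposition \ref{P9}, which gives an explicit count of the irreducible components of $\overline{\mathrm{Fl}_{x,d}(V,\omega)}$ for an arbitrary $\omega$-admissible domino tableau $d$ of shape $\lambda(x)$. Recall from \cite[\S3.3]{van-Leeuwen} that these components are parametrized by equivalence classes of \emph{signed} domino tableaux lying over $d$, that the sign data are organized according to a canonical partition of the dominoes of $d$ into \emph{clusters}, and that each cluster contributes one independent binary choice of sign --- with exactly one global relation imposed in the even orthogonal case, consistently with the splitting of $\mathrm{Fl}(V,\omega)$ into two $G$-homogeneous pieces (Proposition \ref{P8}). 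In particular, \cite[Lemma 3.3.3]{van-Leeuwen} asserts that if $d$ consists of a single cluster, then $\overline{\mathrm{Fl}_{x,d}(V,\omega)}$ is irreducible when $\omega$ is symplectic or $n$ is odd, and has exactly two irreducible components when $\omega$ is orthogonal and $n$ is even. Thus the whole proposition reduces to the combinatorial claim that the tableau $d_x^\omega$ of Notation \ref{N1} is a single cluster.

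To prove that claim I would argue in two steps. First, I would check that each elementary block occurring in (\ref{7}) and (\ref{8}) --- that is, $d_{n,0}$ and $d_{n,k}$ from Example \ref{E1}\,{\rm (a)}--{\rm (b)} --- has a single cluster; for these very simple tableaux (a single column of vertical dominoes, or two columns whose dominoes stack in the evident regular pattern) this follows by direct inspection of van Leeuwen's cluster relation. Second, I would show that the concatenation operation $d_1,d_2\mapsto d_1+d_2$ of Section \ref{section-5.1}, carried out under the hypotheses of Lemma \ref{L1}, produces a tableau with a single cluster whenever $d_1$ and $d_2$ each have a single cluster. The key point is that along the junction between the last column of $d_1$ and the first column of $d_2$, the rows involved have lengths whose parities --- controlled by the number of columns of $d_1$ via the conditions {\rm (O)}/{\rm (S)} --- force the dominoes immediately to the left and right of that junction to lie in a common cluster, so that the (already connected) clusters coming from $d_1$ and from $d_2$ get merged. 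Iterating along the associative concatenations (\ref{7}) and (\ref{8}) then shows that $d_x^\omega$ has a single cluster.

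Granting this, the proposition follows at once from \cite[Lemma 3.3.3]{van-Leeuwen}: part {\rm (a)} is precisely the symplectic case together with the odd-dimensional orthogonal case, and part {\rm (b)} is the even-dimensional orthogonal case, in which the two irreducible components of $\overline{\mathrm{Fl}_{x,d_x^\omega}(V,\omega)}$ lie one in each $G$-homogeneous component of $\mathrm{Fl}(V,\omega)$ and are interchanged by the involution $\iota$ of (\ref{iota}), compatibly with Propositions \ref{P8} and \ref{P10}.

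The main obstacle is the combinatorial bookkeeping behind the single-cluster claim: one must unwind van Leeuwen's definition of clusters for domino tableaux and follow it faithfully through the concatenation operation, keeping track of how the parity of the number of columns of each block --- odd in the orthogonal recursion (\ref{7}), even in the symplectic recursion (\ref{8}) --- interacts with the conditions {\rm (O)}/{\rm (S)} so that no block splits off as a separate cluster, and checking that the convention of adjoining a possibly empty column $\lambda_{\ell+1}^*=0$ does not create a spurious extra cluster. Everything else reduces to a routine application of van Leeuwen's results. (Geometrically, the two-component phenomenon in part {\rm (b)} reflects the fact that in the basic case $\lambda(x)=(2^k,1^{n-2k})$ of Example \ref{E1}\,{\rm (b)} the relevant variety is, up to a flag bundle, the Lagrangian Grassmannian of the nondegenerate space $\ker x/\mathrm{Im}\,x$, which is connected when $\omega$ is symplectic but has two connected components when $\omega$ is orthogonal.)
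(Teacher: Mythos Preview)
Your proposal is correct and follows essentially the same approach as the paper: the paper's proof of Proposition~\ref{P11} consists of the single sentence preceding the statement, namely the assertion that $d_x^\omega$ forms a single cluster in the sense of \cite[\S3.3]{van-Leeuwen}, followed by a direct appeal to \cite[Lemma~3.3.3]{van-Leeuwen}. You reproduce exactly this strategy, and in addition sketch how one would verify the single-cluster claim (which the paper simply asserts), by checking it on the elementary blocks $d_{n,0}$, $d_{n,k}$ and then propagating it through the concatenations of (\ref{7})--(\ref{8}).
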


\subsection{Structure of components of $\mathrm{Fl}_x(V)$ associated to domino tableaux obtained by concatenation}

\label{section-5.2}

In this section we focus on the variety $\mathrm{Fl}_x(V)$ of (non
necessarily isotropic) complete flags of $V=\mathbb{K}^n$ which are
stable by a given (non necessarily skew-adjoint) nilpotent
endomorphism $x:V\to V$. (In fact, for the moment we disregard the
bilinear form $\omega$.)

Let $d$ be a domino tableau of shape $\lambda(x)$ and let us
consider the irreducible component
$\overline{\mathrm{Fl}_{x,d}(V)}\subset\mathrm{Fl}_x(V)$ associated
to $d$ in the sense of Section \ref{section-4.2}. In the next
statement, the notation $\mathrm{Gr}_k(M)$ stands for the
Grassmannian variety formed by the $k$-dimensional subspaces of a
vector space $M$.

\begin{proposition}
\label{P12} Assume that $d$ is obtained as the concatenation of two
domino tableaux $d_1$ and $d_2$ of sizes $n_1$ and $n_2$,
respectively (thus $n=n_1+n_2$). Assume that $n_2=2m_2$ is even and
that we have in fact $d_2=d_{n_2,k_2}$ for some
$k_2\in\{0,\ldots,m_2\}$ (see Example \ref{E1}\,{\rm (a)}--{\rm
(b)}). Let $x_1\in\mathfrak{sl}_{n_1}(\mathbb{K})$ be a nilpotent
element whose Jordan form coincides with the shape of $d_1$, so that
$d_1$ gives rise to an irreducible component
$\overline{\mathrm{Fl}_{x_1,d_1}(\mathbb{K}^{n_1})}\subset\mathrm{Fl}_{x_1}(\mathbb{K}^{n_1})$.
\begin{itemize}
\item[\rm (a)] The following implication holds:
\[\mbox{$\overline{\mathrm{Fl}_{x_1,d_1}(\mathbb{K}^{n_1})}$ is
smooth}\ \Rightarrow\ \mbox{$\overline{\mathrm{Fl}_{x,d}(V)}$ is
smooth}.\]
\item[\rm (b)] More precisely, denoting by $\ell_1$ the number of
columns in $d_1$, we have
\[\mathrm{Im}\,x^{\ell_1+1}\subset V_{m_2}\subset\ker x\cap\mathrm{Im}\,x^{\ell_1}\ \mbox{ for all $(V_0,\ldots,V_n)\in\overline{\mathrm{Fl}_{x,d}(V)}$}\]
and the map
\[\begin{array}{rcl}
\Theta:\quad \overline{\mathrm{Fl}_{x,d}(V)} & \to &
\mathrm{Gr}_{m_2-k_2}(\ker x\cap\mathrm{Im}\,x^{\ell_1}/\mathrm{Im}\,x^{\ell_1+1}) \\
(V_0,\ldots,V_n) & \mapsto & V_{m_2}/\mathrm{Im}\,x^{\ell_1+1}
\end{array}\] is a well-defined, algebraic, locally trivial fiber bundle,
whose typical fiber is isomorphic to
\[\mathrm{Fl}(\mathbb{K}^{m_2})\times\mathrm{Fl}(\mathbb{K}^{m_2})\times\overline{\mathrm{Fl}_{x_1,d_1}(\mathbb{K}^{n_1})}.\]
\end{itemize}
\end{proposition}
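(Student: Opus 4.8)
The plan is to analyze the flag structure imposed by the concatenated domino tableau $d = d_1 + d_2$ and show that every flag in $\mathrm{Fl}_{x,d}(V)$ is forced to pass through a distinguished $x$-stable subspace on which $x$ acts with Jordan type $\lambda^2 = (2^{k_2}, 1^{n_2 - 2k_2})$, and that the ``ambient'' quotient carries the data of $d_1$. First I would unwind the defining conditions of $\mathrm{Fl}_{x,d}(V)$: for $i = 0, \ldots, \lfloor n/2\rfloor$, the Jordan type of $x$ acting on $V_{n-i}/V_i$ is $\lambda^{\lfloor n/2\rfloor - i}(d)$. Because $d$ is the concatenation, the first $m_2 = n_2/2$ steps (peeling off the boxes numbered $\lfloor n/2\rfloor, \lfloor n/2 \rfloor - 1, \ldots, \lfloor n/2\rfloor - m_2 + 1$) remove exactly the columns of $d_2$, i.e.\ the removed boxes form the shape $\lambda^2$ sitting in columns $\ell_1 + 1, \ldots, \ell_1 + \ell_2$. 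So for $i \in \{m_2, \ldots, \lfloor n/2\rfloor\}$ the shape $\lambda^{\lfloor n/2\rfloor - i}(d)$ equals $\lambda^1$ minus some boxes (and for $i \le m_2$ it equals all of $\lambda^1$ plus a sub-shape of $\lambda^2$). This is the combinatorial heart of the argument, and it follows directly from the column-by-column definition of concatenation together with the fact that $d_2 = d_{n_2, k_2}$ is the ``standard'' two-column domino tableau, whose boxes numbered $\le j$ fill the top $j$ rows of its two columns.

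Next I would translate this into a constraint on the subspace $V_{m_2}$, exactly as in Example~\ref{E1}\,{\rm (b)}. The condition that $x|_{V_{n-i}/V_i}$ has Jordan type obtained from $\lambda^1 + \lambda^2$ by removing, step by step, the dominoes of $d_2 = d_{n_2,k_2}$ forces, by a rank computation identical to the one in Example~\ref{E1}\,{\rm (b)} (applied to $x^{\ell_1}$ and $x^{\ell_1 + 1}$ in place of $x$ and $x^2$), the chain of inclusions
\[
\mathrm{Im}\,x^{\ell_1 + 1} \subset V_{m_2} \subset \ker x \cap \mathrm{Im}\,x^{\ell_1},
\]
with $\dim V_{m_2}/\mathrm{Im}\,x^{\ell_1+1} = m_2 - k_2$. (Here $\ell_1$ is the number of columns of $d_1$, so $\mathrm{Im}\,x^{\ell_1}$ is spanned by the boxes in columns $> \ell_1$, i.e.\ essentially the $\lambda^2$-part, and $\mathrm{Im}\,x^{\ell_1+1} = \mathrm{Im}(x|_{\text{that part}})$.) This shows the map $\Theta$ is well defined. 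I would then check that $\Theta$ is a morphism (it is a restriction of the tautological projection from a partial flag variety to a Grassmannian) and that it is a locally trivial fibration: over a point $W \in \mathrm{Gr}_{m_2 - k_2}(\ker x \cap \mathrm{Im}\,x^{\ell_1}/\mathrm{Im}\,x^{\ell_1 + 1})$, fixing $V_{m_2}$ (the preimage of $W$ in $\ker x \cap \mathrm{Im}\,x^{\ell_1}$), the fiber splits: the flag $(V_0, \ldots, V_{m_2})$ inside $V_{m_2}$ ranges over a copy of $\mathrm{Fl}_{x|_{V_{m_2}}}$-type data which, because $x$ is trivial on $V_{m_2}/\mathrm{Im}\,x^{\ell_1+1}$ and $V_{m_2}$ is a Lagrangian-like two-column piece, is a full flag variety $\mathrm{Fl}(\mathbb{K}^{m_2})$; the flag $(V_{n - m_2}, \ldots, V_n)$ above $V_{n-m_2} = V_{m_2}^{\text{(dual role)}}$ contributes another $\mathrm{Fl}(\mathbb{K}^{m_2})$ by the dual computation; and the remaining ``middle'' data $(V_{m_2} \subset \cdots \subset V_{n - m_2})$, which is an $x$-stable flag in $V_{n-m_2}/V_{m_2}$ refining the kernel/image filtration coming from $d_1$, is exactly $\overline{\mathrm{Fl}_{x_1, d_1}(\mathbb{K}^{n_1})}$ where $x_1 := x|_{V_{n - m_2}/V_{m_2}}$ has Jordan type $\lambda^1$. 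Local triviality follows because all three ingredients vary algebraically and one can trivialize over an affine chart of the base Grassmannian by a standard choice of complement.

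Part~{\rm (a)} is then immediate from {\rm (b)}: a locally trivial fiber bundle whose base (a Grassmannian) and whose fiber ($\mathrm{Fl}(\mathbb{K}^{m_2}) \times \mathrm{Fl}(\mathbb{K}^{m_2}) \times \overline{\mathrm{Fl}_{x_1, d_1}(\mathbb{K}^{n_1})}$) are all smooth is itself smooth; and the hypothesis of {\rm (a)} is precisely that the last factor is smooth, the other two being smooth unconditionally. The step I expect to be the main obstacle is verifying the clean splitting of the typical fiber into the three named factors --- in particular, pinning down that the ``middle'' part is genuinely $\overline{\mathrm{Fl}_{x_1, d_1}(\mathbb{K}^{n_1})}$ and not merely a variety of the same dimension. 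This requires carefully matching the domino conditions defining $\mathrm{Fl}_{x,d}(V)$ (restricted to the subquotient $V_{n-m_2}/V_{m_2}$) with those defining $\mathrm{Fl}_{x_1, d_1}(\mathbb{K}^{n_1})$, using that $\lambda^i(d)$ for $i \le \lfloor n_1/2 \rfloor$, after discarding the $\lambda^2$-columns, recovers $\lambda^i(d_1)$; the identity $\lambda^{\lfloor n/2\rfloor - i}(d) = \lambda^1 + \lambda^{\lfloor n_2/2\rfloor}(\text{something})$ versus $\lambda^{\lfloor n_1/2\rfloor - i}(d_1)$ needs to be tracked through the index shift by $\lfloor n_1/2\rfloor$ built into the concatenation. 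Once that bookkeeping is in place, and one invokes Proposition~\ref{P6} to identify $\mathrm{Fl}_{x_1, d_1}$ with $\mathrm{Fl}_{x_1, \tau(d_1)}$, the proof closes.
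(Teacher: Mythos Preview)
Your outline follows the same overall architecture as the paper's proof, but there are two places where the paper does something more careful than what you sketch, and without them the argument does not close.

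First, you gloss over the role of $V_{n-m_2}$. You write that the top part $(V_{n-m_2},\ldots,V_n)$ ``contributes another $\mathrm{Fl}(\mathbb{K}^{m_2})$ by the dual computation'' and refer to $V_{n-m_2}$ as ``$V_{m_2}^{\text{(dual role)}}$'', but the precise mechanism is missing. The paper proves (Lemma~\ref{L2}\,{\rm (a)}) that for every flag in $\overline{\mathrm{Fl}_{x,d}(V)}$ one has $x^{\ell_1}(V_{n-m_2})=V_{m_2}$ and $V_{n-m_2}=(x^{\ell_1})^{-1}(V_{m_2})$. This is what pins down $V_{n-m_2}$ once $V_{m_2}$ is chosen, and it is what makes $x^{\ell_1}$ identify the ``top'' piece $(V_{n-m_2},\ldots,V_n)$ with a flag in $\mathrm{Im}\,x^{\ell_1}/L$, hence with $\mathrm{Fl}(\mathbb{K}^{m_2})$. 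Without this relation you cannot decouple the top from the middle, and your fiber description does not follow.

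Second, and this is the obstacle you yourself flag, your proposed resolution does not actually work. Matching the domino conditions shows that, for flags in the \emph{open} piece $\mathrm{Fl}_{x,d}(V)$ with $V_{m_2}=L$ fixed, the middle datum lies in the \emph{open} piece $\mathrm{Fl}_{x_1,d_1}(\hat L/L)$. But $\Theta$ is defined on the \emph{closure} $\overline{\mathrm{Fl}_{x,d}(V)}$, and there is no a priori reason that the fiber of the closure is the closure of the fiber. The paper circumvents this by proving only one inclusion directly (Lemma~\ref{L2}\,{\rm (c)} builds elements of $\mathrm{Fl}_{x,d}(V)$ from $\mathrm{Fl}(L)\times\mathrm{Fl}_{x_1,d_1}(\hat L/L)\times\mathrm{Fl}(V/\hat L)$, and then one takes closures), and then closing up via a dimension count: the fiber $\Theta^{-1}(L)$ is irreducible (because local triviality is already established) and has the same dimension as $\mathrm{Fl}(L)\times\overline{\mathrm{Fl}_{x_1,d_1}(\hat L/L)}\times\mathrm{Fl}(V/\hat L)$, forcing equality. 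Your combinatorial bookkeeping alone cannot produce this.

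A smaller point: the paper obtains local triviality not by ``a standard choice of complement over an affine chart'' but by observing that $\mathrm{SL}(M)$ (with $M=\ker x\cap\mathrm{Im}\,x^{\ell_1}/\mathrm{Im}\,x^{\ell_1+1}$) embeds in $Z_G(x)$, so $\Theta$ is $\mathrm{SL}(M)$-equivariant over a homogeneous base; Bruhat then gives local sections. This is cleaner because it gives local triviality \emph{before} the fiber is identified, which is exactly what the dimension argument above needs.
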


The proof (given below) relies on the following lemma.

\begin{lemma}
\label{L2} Assume that $d$ is obtained as the concatenation of two
domino tableaux $d_1$ and $d_2$ of sizes $n_1$ and $n_2$,
respectively (thus $n=n_1+n_2$). Assume that $n_2=2m_2$ is even.
\begin{itemize}
\item[\rm (a)]
Let $\ell_1$ be the number of columns in $d_1$. For every
$(V_0,\ldots,V_n)\in\overline{\mathrm{Fl}_{x,d}(V)}$, we have
\[
x^{\ell_1}(V_{n-m_2})=V_{m_2}\quad\mbox{and}\quad
V_{n-m_2}=(x^{\ell_1})^{-1}(V_{m_2}).\] Hence, letting
$x_2=x|_{\mathrm{Im}\,x^{\ell_1}}$, the map
$\varphi:\overline{\mathrm{Fl}_{x,d}(V)}\to\mathrm{Fl}_{x_2}(\mathrm{Im}\,x^{\ell_1})$
given by \[\varphi:(V_0,\ldots,V_n)\mapsto
(V_0,\ldots,V_{m_2},x^{\ell_1}(V_{n-m_2+1}),\ldots,x^{\ell_1}(V_{n}))\]
is well defined (and algebraic).
\item[\rm (b)]
The Jordan form of $x_2$ coincides with the shape of the domino
tableau $d_2$, which gives rise to the subset
$\mathrm{Fl}_{x_2,d_2}(\mathrm{Im}\,x^{\ell_1})\subset\mathrm{Fl}_{x_2}(\mathrm{Im}\,x^{\ell_1})$;
in fact, we have
\[\varphi(\mathrm{Fl}_{x,d}(V))\subset \mathrm{Fl}_{x_2,d_2}(\mathrm{Im}\,x^{\ell_1})\]
and so
\[\varphi(\overline{\mathrm{Fl}_{x,d}(V)})\subset \overline{\mathrm{Fl}_{x_2,d_2}(\mathrm{Im}\,x^{\ell_1})}.\]
\item[\rm (c)]
Let $F=(V_0,\ldots,V_{n_2})\in
\mathrm{Fl}_{x_2,d_2}(\mathrm{Im}\,x^{\ell_1})$. Let $x_1=x_1(F)$ be
the nilpotent endomorphism of
$W(F):=(x^{\ell_1})^{-1}(V_{m_2})/V_{m_2}$ induced by $x$ (which
depends on $F$). Then, the Jordan form of $x_1$ coincides with the
shape of $d_1$, which gives rise to the subset
$\mathrm{Fl}_{x_1,d_1}(W(F))\subset\mathrm{Fl}_{x_1}(W(F))$; and in
fact, for every
$(W_0,\ldots,W_{n_1})\in\mathrm{Fl}_{x_1,d_1}(W(F))$, the element
\[(V_0,\ldots,V_{m_2},\pi^{-1}(W_1),\ldots,\pi^{-1}(W_{n_1}),(x^{\ell_1})^{-1}(V_{m_2+1}),\ldots,(x^{\ell_1})^{-1}(V_{n_2}))\]
belongs to $\mathrm{Fl}_{x,d}(V)$, where $\pi=\pi(F)$ stands
for the natural surjection $\pi:(x^{\ell_1})^{-1}(V_{m_2})\to
W(F)$.
\end{itemize}
\end{lemma}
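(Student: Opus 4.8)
The plan is to establish Lemma \ref{L2} in three stages, matching parts (a), (b), (c), by carefully tracking how the concatenation structure $d = d_1 + d_2$ constrains the Jordan types of the subquotients $x|_{V_{n-i}/V_i}$ for flags in $\mathrm{Fl}_{x,d}(V)$. The central combinatorial observation is this: since $d_2 = d_{n_2,k_2}$ has all its columns appended after the $\ell_1$ columns of $d_1$, the shape $\lambda^{\lfloor n/2 \rfloor - m_2 + j}(d)$ for $j = 0, \dots, m_2$ consists of the full diagram of $d_1$ together with the first $j$ rows of the two-column diagram of $d_2$ (or rather with $\min(j,k_2)$ boxes in the second column and $j$ boxes in the first, placed below $d_1$'s columns). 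In particular $\lambda^{\lfloor n/2\rfloor - m_2}(d)$ is exactly the shape of $d_1$. So for $(V_0,\dots,V_n) \in \mathrm{Fl}_{x,d}(V)$, the Jordan form of $x|_{V_{n-m_2}/V_{m_2}}$ is the shape of $d_1$, which has $\ell_1$ columns; this means $x^{\ell_1}$ annihilates $V_{n-m_2}/V_{m_2}$ but $x^{\ell_1 - 1}$ does not kill it entirely. I would combine this with a rank/dimension count on the full space to get $x^{\ell_1}(V_{n-m_2}) = V_{m_2}$ and $V_{n-m_2} = (x^{\ell_1})^{-1}(V_{m_2})$ for flags in the open set $\mathrm{Fl}_{x,d}(V)$, and then extend to the closure $\overline{\mathrm{Fl}_{x,d}(V)}$ by noting that the conditions defining these equalities are closed (they are inclusions between subspaces of locally constant dimension, since dimensions are determined by the Jordan types which are constant on the irreducible component). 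This gives part (a): once $x^{\ell_1}(V_{n-m_2}) = V_{m_2}$, the formula for $\varphi$ makes sense because $x^{\ell_1}(V_{n-m_2+j})$ is a subspace of $\mathrm{Im}\,x^{\ell_1}$ of the correct dimension $m_2 - j$ (using $V_{n-m_2+j} \supset V_{n-m_2} = (x^{\ell_1})^{-1}(V_{m_2})$, the map $x^{\ell_1}$ restricted to $V_{n-m_2+j}$ has kernel exactly $V_{n-m_2}$ when $j \le m_2$... more precisely I should check the dimensions decrease correctly), and $x$-stability of each $x^{\ell_1}(V_{n-m_2+j})$ is immediate from $x$-stability of $V_{n-m_2+j}$ and commutativity of $x$ with $x^{\ell_1}$; algebraicity is clear.

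For part (b), the Jordan form of $x_2 = x|_{\mathrm{Im}\,x^{\ell_1}}$: I would argue that $\mathrm{Im}\,x^{\ell_1}$ has dimension equal to the number of boxes of $\lambda(x)$ lying in columns $\geq \ell_1 + 1$, which by the concatenation picture is exactly $n_2 = |\lambda^2|$, and the Jordan type of $x$ restricted there is the partition whose columns are the columns of $\lambda(x)$ beyond the $\ell_1$-th, i.e., the shape of $\lambda^2 = $ shape of $d_2$. (This is a standard fact about nilpotent endomorphisms: the Jordan type of $x|_{\mathrm{Im}\,x^{\ell_1}}$ is obtained from $\lambda(x)$ by deleting the first $\ell_1$ columns.) Then to show $\varphi(\mathrm{Fl}_{x,d}(V)) \subset \mathrm{Fl}_{x_2,d_2}(\mathrm{Im}\,x^{\ell_1})$, I need to verify that the flag $\varphi(V_0,\dots,V_n) = (V_0,\dots,V_{m_2}, x^{\ell_1}(V_{n-m_2+1}),\dots,x^{\ell_1}(V_n))$ has the prescribed subquotient Jordan types. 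Using Lemma \ref{L2}(a), the subspace in position $i$ of this flag (for $i \le m_2$) is $V_i$ and in position $m_2 + j$ is $x^{\ell_1}(V_{n-m_2+j})$; I would check that the relevant subquotient $(\varphi F)_{n_2 - i'} / (\varphi F)_{i'}$ of $\mathrm{Im}\,x^{\ell_1}$ matches $\lambda^{\lfloor n_2/2\rfloor - i'}(d_2)$ by relating it to the subquotient $V_{n-i''}/V_{i''}$ of $V$ (with appropriate index shift $i'' = i'$, so that $n - i'' = n_2 - i' + \ell_1\text{-columns'-worth}$... this index bookkeeping is the fiddly part) via the isomorphism induced by $x^{\ell_1}$, using again that $x^{\ell_1}$ and $x$ commute so Jordan types of $x$ on a subquotient and of $x_2$ on its image under $x^{\ell_1}$ agree up to the part killed by $x^{\ell_1}$, which is controlled. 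The passage to closures is then automatic since $\varphi$ is continuous.

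For part (c), the reverse construction: given $F = (V_0,\dots,V_{n_2}) \in \mathrm{Fl}_{x_2,d_2}(\mathrm{Im}\,x^{\ell_1})$, I set $W(F) = (x^{\ell_1})^{-1}(V_{m_2})/V_{m_2}$, a space of dimension $\dim (x^{\ell_1})^{-1}(V_{m_2}) - m_2$; since $V_{m_2} \subset \mathrm{Im}\,x^{\ell_1}$ and $(x^{\ell_1})^{-1}(\mathrm{Im}\,x^{\ell_1})$ has dimension $n_1 + $ (something), I compute this dimension to be $n_1$, and I claim the Jordan type of the induced $x_1$ on $W(F)$ is the shape of $d_1$. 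The cleanest argument: $x^{\ell_1}$ induces an isomorphism $(x^{\ell_1})^{-1}(V_{m_2})/(V_{m_2} + \ker x^{\ell_1}) \cong \ldots$, but more directly, $\ker x^{\ell_1} \subset (x^{\ell_1})^{-1}(V_{m_2})$ and one sees that $x|_{W(F)}$ is nilpotent of type = (shape of $\lambda(x)$ truncated to first $\ell_1$ columns) = shape of $d_1$; this uses that $V_{m_2}$ is a "generic enough" $x$-stable subspace of $\mathrm{Im}\,x^{\ell_1}$ of dimension $m_2$ coming from a point of $\mathrm{Fl}_{x_2,d_2}$, so that the preimage behaves as expected — here the assumption $d_2 = d_{n_2,k_2}$, forcing $V_{m_2}$ to sit between $\mathrm{Im}\,x_2$ and $\ker x_2$ by Example \ref{E1}(b), is exactly what guarantees the preimage computation works cleanly. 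Finally, for $(W_0,\dots,W_{n_1}) \in \mathrm{Fl}_{x_1,d_1}(W(F))$, I would verify that the spliced flag $(V_0,\dots,V_{m_2},\pi^{-1}(W_1),\dots,\pi^{-1}(W_{n_1}),(x^{\ell_1})^{-1}(V_{m_2+1}),\dots,(x^{\ell_1})^{-1}(V_{n_2}))$ is a genuine complete flag of $V$ (dimensions: $0,1,\dots,m_2$, then $m_2+1,\dots,m_2+n_1 = n - m_2$, then $n-m_2+1,\dots,n$ — consistent), that it is $x$-stable (each piece is a preimage under an $x$-commuting map of an $x$-stable space, or $\pi^{-1}$ of an $x_1$-stable space, hence $x$-stable), and that it lies in $\mathrm{Fl}_{x,d}(V)$ by checking the subquotient Jordan types $\lambda(x|_{V_{n-i}/V_i}) = \lambda^{\lfloor n/2\rfloor - i}(d)$ — splitting into the ranges $i \le m_2$ (governed by $d_1$'s shape sitting inside, via $W(F)$ and the $W_j$'s) and $i > m_2$ is impossible since $i \le \lfloor n/2\rfloor$ and ... actually $i$ ranges up to $\lfloor n/2\rfloor = m_2 + \lfloor n_1/2 \rfloor$, so for $i$ in $(m_2, m_2 + \lfloor n_1/2\rfloor]$ the subquotient $V_{n-i}/V_i = \pi^{-1}(W_{n_1 - (i - m_2)})/\pi^{-1}(W_{i - m_2})$ has $x$-Jordan type equal to the $x_1$-Jordan type on $W_{n_1 - (i-m_2)}/W_{i-m_2}$ — wait, one must add back the contribution of $V_{m_2}$, i.e., the type is (type of $d_2$ at level $0$ truncated) $\sqcup$ ... — this is the step requiring care, and it is essentially the inverse of the bookkeeping in part (b). The main obstacle throughout is precisely this index/shape bookkeeping: correctly identifying which subquotient of the big flag corresponds to which level of $d_1$ versus $d_2$, and confirming that the "removing/adding the first $\ell_1$ columns" operation on Jordan types is compatible with the domino-tableau filtrations $\lambda^\bullet(d), \lambda^\bullet(d_1), \lambda^\bullet(d_2)$ under concatenation. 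Once the combinatorics of $\lambda^i(d) = \lambda^i(d_1) \sqcup (\text{truncation of } \lambda^{i - \lfloor n_1/2\rfloor}(d_2)\text{ past column }\ell_1)$ is pinned down cleanly at the start, the rest is a sequence of dimension counts and applications of the commutation of $x$ with $x^{\ell_1}$.
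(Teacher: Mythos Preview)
Your outline follows the same route as the paper's proof, but there are two issues worth flagging.

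First, you repeatedly invoke the hypothesis $d_2 = d_{n_2,k_2}$, but Lemma~\ref{L2} carries no such assumption---that hypothesis belongs to Proposition~\ref{P12}. In particular, in part~(c) you claim this special form of $d_2$ is ``exactly what guarantees the preimage computation works cleanly'' when determining $\lambda(x_1)$. In fact it is not needed: the paper isolates a small linear-algebra lemma (Lemma~\ref{L-linalg}) stating that for \emph{any} $x$-stable subspace $M\subset\mathrm{Im}\,x^{\ell_1}$ one has $\lambda\big(x|_{(x^{\ell_1})^{-1}(M)/M}\big)=\lambda^1$ (the first $\ell_1$ columns of $\lambda(x)$) and $\lambda\big(x|_{\mathrm{Im}\,x^{\ell_1}}\big)=\lambda^2$. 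This single lemma handles all the Jordan-type bookkeeping in parts~(b) and~(c) uniformly and removes the need for any hypothesis on $d_2$ beyond $n_2$ even. What you call the ``standard fact'' for $x|_{\mathrm{Im}\,x^{\ell_1}}$ is one half of this lemma; the other half is precisely what you are missing in~(c), and it is also what drives the clean verification of the subquotient types of the spliced flag there.

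Second, your justification in part~(a) for passing to the closure---``dimensions are determined by the Jordan types which are constant on the irreducible component''---is not right: the Jordan types of the subquotients $x|_{V_{n-i}/V_i}$ are constant only on the open stratum $\mathrm{Fl}_{x,d}(V)$, not on its closure. The paper instead notes that the \emph{inclusion} $x^{\ell_1}(V_{n-m_2})\subset V_{m_2}$ is a closed condition, hence extends to the closure; then the rank bound
\[
\dim x^{\ell_1}(V_{n-m_2})\ \geq\ \dim V_{n-m_2}-\dim\ker x^{\ell_1}=(n-m_2)-n_1=m_2,
\]
which uses only that $V_\bullet$ is a complete flag and that $\ker x^{\ell_1}$ is a fixed subspace of dimension $n_1$, forces equality on the closure directly.
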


In the proof of Lemma \ref{L2}, we use the following basic fact of
linear algebra.

\begin{lemma}
\label{L-linalg} Let $x:V\to V$ be a nilpotent endomorphism, whose
Jordan form $\lambda(x)$ is seen as a Young diagram. Let $\lambda^1$
be the subdiagram formed by the first $\ell_1$ columns of
$\lambda(x)$ and let $\lambda^2$ be the subdiagram formed by the
remaining columns, so that $\lambda(x)=\lambda^1+\lambda^2$ (with
the notation of Section \ref{section-5.1}). Then, for every
$x$-stable subspace $M\subset\mathrm{Im}\,x^{\ell_1}$, we have
\[\lambda^1=\lambda(x|_{(x^{\ell_1})^{-1}(M)/M})\quad \mbox{and}\quad
\lambda^2=\lambda(x|_{\mathrm{Im}\,x^{\ell_1}}),\] where
$x|_{(x^{\ell_1})^{-1}(M)/M}:(x^{\ell_1})^{-1}(M)/M\to(x^{\ell_1})^{-1}(M)/M$
and $x|_{\mathrm{Im}\, x^{\ell_1}}:\mathrm{Im}\, x^{\ell_1}\to
\mathrm{Im}\, x^{\ell_1}$ are the nilpotent maps induced by
$x$.
\end{lemma}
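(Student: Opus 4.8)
The plan is to compare, column by column, the Young diagrams appearing on the two sides of each of the two equalities. Recall (cf.\ Section~\ref{section-SLn}) that for any nilpotent endomorphism $y$ the length of the $j$-th column of its Jordan type equals $\dim\ker y^j-\dim\ker y^{j-1}$. Recall also that, by definition of the concatenation (Section~\ref{section-5.1}), the $j$-th column of $\lambda^1$ is the $j$-th column of $\lambda(x)$ for $j\le\ell_1$ and is empty for $j>\ell_1$, while the $j$-th column of $\lambda^2$ is the $(\ell_1+j)$-th column of $\lambda(x)$.

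For the identity $\lambda^2=\lambda(x|_{\mathrm{Im}\,x^{\ell_1}})$, set $z:=x|_{\mathrm{Im}\,x^{\ell_1}}$. First I would check that $\ker z^j=\ker x^j\cap\mathrm{Im}\,x^{\ell_1}=x^{\ell_1}(\ker x^{j+\ell_1})$; since $\ker x^{\ell_1}\subset\ker x^{j+\ell_1}$, this gives $\dim\ker z^j=\dim\ker x^{j+\ell_1}-\dim\ker x^{\ell_1}$, so by the column-length formula the $j$-th column of $\lambda(z)$ has the length of the $(\ell_1+j)$-th column of $\lambda(x)$. Hence $\lambda(z)=\lambda^2$; note this half does not involve $M$ at all.

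For the identity $\lambda^1=\lambda(x|_{(x^{\ell_1})^{-1}(M)/M})$, write $N:=(x^{\ell_1})^{-1}(M)/M$; this quotient makes sense because $M$ is $x$-stable, so $x^{\ell_1}(M)\subset M$ and $M\subset(x^{\ell_1})^{-1}(M)$. Let $y:=x|_N$. Since $x^{\ell_1}(v)\in M$ for every $v\in(x^{\ell_1})^{-1}(M)$, we have $y^{\ell_1}=0$, so $\lambda(y)$ has at most $\ell_1$ columns. For $j\le\ell_1$, the $x$-stability of $M$ gives $(x^j)^{-1}(M)\subset(x^{\ell_1})^{-1}(M)$ (if $x^j(v)\in M$ then $x^{\ell_1}(v)=x^{\ell_1-j}(x^j(v))\in M$), whence $\ker y^j=(x^j)^{-1}(M)/M$; moreover $M\subset\mathrm{Im}\,x^{\ell_1}\subset\mathrm{Im}\,x^j$, so applying the rank--nullity theorem to $x^j$ restricted to $(x^j)^{-1}(M)$ yields $\dim(x^j)^{-1}(M)=\dim\ker x^j+\dim(M\cap\mathrm{Im}\,x^j)=\dim\ker x^j+\dim M$. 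Subtracting $\dim M$ gives $\dim\ker y^j=\dim\ker x^j$ for all $j\le\ell_1$, so the $j$-th column of $\lambda(y)$ coincides with that of $\lambda(x)$ for $j\le\ell_1$; together with $y^{\ell_1}=0$ this shows $\lambda(y)=\lambda^1$.

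The only delicate point — which is really the whole content of the lemma — is that $\lambda(y)$ does not depend on the chosen $x$-stable subspace $M\subset\mathrm{Im}\,x^{\ell_1}$. This is exactly what the cancellation $\dim\ker y^j=\dim\ker x^j$ records: the $M$-dependent contributions disappear precisely because the inclusion $M\subset\mathrm{Im}\,x^{\ell_1}$ forces $M\cap\mathrm{Im}\,x^j=M$ for all $j\le\ell_1$, while the $x$-stability of $M$ makes $\ker y^j$ come out to the expected preimage $(x^j)^{-1}(M)/M$. An alternative proof working directly with a Jordan basis of $x$ is possible but more cumbersome; the kernel-dimension bookkeeping above is the cleaner route.
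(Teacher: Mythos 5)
Your proof is correct and follows essentially the same route as the paper's: both reduce the statement to the identity ``$j$-th column length $=\dim\ker(\cdot)^j-\dim\ker(\cdot)^{j-1}$'' and exploit the inclusion $M\subset\mathrm{Im}\,x^{\ell_1}\subset\mathrm{Im}\,x^{j}$ for $j\le\ell_1$; the only cosmetic difference is that you compute $\dim(x^j)^{-1}(M)$ directly by rank--nullity where the paper exhibits the isomorphism $(x^j)^{-1}(M)/\ker x^j\cong(x^{j-1})^{-1}(M)/\ker x^{j-1}$ induced by $x$.
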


\begin{proof}[Proof of Lemma \ref{L-linalg}]
For every $k\geq 1$, the length of the $k$-th column of $\lambda(x)$
coincides with the dimension of $\ker x^k/\ker x^{k-1}$. Set
$x_1=x_1(M)=x|_{(x^{\ell_1})^{-1}(M)/M}$ and $x_2=x|_{\mathrm{Im}\,
x^{\ell_1}}$.

By construction $x_1^{\ell_1}=0$, hence $\lambda(x_1)$ has (at most)
$\ell_1$ columns. Moreover, for every $k\in\{1,\ldots,\ell_1\}$,
since $M\subset\mathrm{Im}\,x^{\ell_1}\subset\mathrm{Im}\,x^k$, we
have an isomorphism
\[(x^k)^{-1}(M)/\ker
x^k\stackrel{\sim}{\to}(x^{k-1})^{-1}(M)/\ker x^{k-1}\] induced by
$x$, hence
\[\dim \ker x_1^k/\ker x_1^{k-1}=\dim (x^k)^{-1}(M)/(x^{k-1})^{-1}(M)=
\dim \ker x^k/\ker x^{k-1}.\] Whence $\lambda(x_1)=\lambda^1$.

The map $x^{\ell_1}:V\to\mathrm{Im}\,x^{\ell_1}$ yields an
isomorphism $\ker x^{\ell_1+k}/\ker x^{\ell_1}\cong\ker x_2^k$ for
all $k\geq 1$. Whence $\lambda(x_2)=\lambda^2$.
\end{proof}

\begin{proof}[Proof of Lemma \ref{L2}]
{\rm (a)} Let $(V_0,\ldots,V_n)\in\mathrm{Fl}_{x,d}(V)$. By
definition of $\mathrm{Fl}_{x,d}(V)$, the nilpotent endomorphism
$x|_{V_{n-m_2}/V_{m_2}}$ induced by $x$ on the subquotient
$V_{n-m_2}/V_{m_2}$ has a Jordan form which coincides with the shape
of the subtableau $d_1$ of $d$. Since $d_1$ has $\ell_1$ columns,
this implies that $(x|_{V_{n-m_2}/V_{m_2}})^{\ell_1}=0$, hence
\begin{equation}
\label{L-1} x^{\ell_1}(V_{n-m_2})\subset V_{m_2}.
\end{equation}
Since (\ref{L-1}) is a closed relation, it holds more generally
whenever $(V_0,\ldots,V_n)\in\overline{\mathrm{Fl}_{x,d}(V)}$. Then,
the rank formula applied to the map $x^{\ell_1}:V_{n-m_2}\to
V_{m_2}$ yields
\begin{eqnarray*}
m_2 = \dim V_{m_2} & \geq & \dim x^{\ell_1}(V_{n-m_2}) = \dim
V_{n-m_2}-\dim V_{n-m_2}\cap\ker x^{\ell_1} \\
 & \geq & \dim V_{n-m_2}-\dim \ker x^{\ell_1} = (n-m_2)-n_1=m_2.
\end{eqnarray*}
This forces $\dim V_{m_2}=\dim x^{\ell_1}(V_{n-m_2})$ and
$V_{n-m_2}\cap\ker x^{\ell_1}=\ker x^{\ell_1}$. In view of
(\ref{L-1}), we conclude that
\[
x^{\ell_1}(V_{n-m_2})=V_{m_2},\quad \ker x^{\ell_1}\subset
V_{n-m_2},\quad \mbox{and so }\ V_{n-m_2}=(x^{\ell_1})^{-1}(V_{m_2})
\]
for all $(V_0,\ldots,V_n)\in\overline{\mathrm{Fl}_{x,d}(V)}$. \\
{\rm (b)} By assumption, $\lambda(x)$ (which is the shape of
$d=d_1+d_2$) is obtained by juxtaposing the shape of $d_1$ (formed
by $\ell_1$ columns) and the shape of $d_2$. Lemma \ref{L-linalg}
then implies that $\lambda(x_2)$ coincides with the shape of $d_2$.

Let $(V_0,\ldots,V_n)\in\mathrm{Fl}_{x,d}(V)$. Fix
$i\in\{0,1,\ldots,m_2\}$ and let us compare the nilpotent
endomorphisms
\[
x|_{V_{n-i}/V_i}:V_{n-i}/V_i\to V_{n-i}/V_i \ \mbox{ and }\
x|_{x^{\ell_1}(V_{n-i})/V_i}:x^{\ell_1}(V_{n-i})/V_i\to
x^{\ell_1}(V_{n-i})/V_i
\]
induced by $x$. On the one hand, by definition of
$\mathrm{Fl}_{x,d}(V)$, the Jordan form $\lambda(x|_{V_{n-i}/V_i})$
coincides with the shape $\lambda^{\lfloor\frac{n}{2}\rfloor-i}(d)$
of the subtableau of $d$ formed by the numbers $\leq \lfloor
\frac{n}{2}\rfloor-i$. Since $d=d_1+d_2$, the diagram
$\lambda^{\lfloor\frac{n}{2}\rfloor-i}(d)$ is obtained by
juxtaposing the shape of $d_1$ (consisting of $\ell_1$ columns) and
the shape $\lambda^{m_2-i}(d_2)$ of the subtableau of $d_2$ formed
by the numbers $\leq m_2-i$. On the other hand, we see that
\[x^{\ell_1}(V_{n-i})/V_i=\mathrm{Im}\,(x|_{V_{n-i}/V_i})^{\ell_1},\]
hence $x|_{x^{\ell_1}(V_{n-i})/V_i}$ is the restriction of
$x|_{V_{n-i}/V_i}$ to $\mathrm{Im}\,(x|_{V_{n-i}/V_i})^{\ell_1}$,
which implies (by Lemma \ref{L-linalg}) that the Jordan form
$\lambda(x|_{x^{\ell_1}(V_{n-i})/V_i})$ is the Young diagram
obtained from $\lambda(x|_{V_{n-i}/V_i})$ by removing the first
$\ell_1$ columns. Altogether, this yields
\[
\lambda(x|_{x^{\ell_1}(V_{n-i})/V_i})=\lambda^{m_2-i}(d_2)\ \mbox{
for all $i\in\{0,1,\ldots,m_2\}$}.
\]
Therefore, we have shown:
\[
(V_0,\ldots,V_n)\in\mathrm{Fl}_{x,d}(V)\quad\Rightarrow\quad
\varphi((V_0,\ldots,V_n))\in\mathrm{Fl}_{x_2,d_2}(\mathrm{Im}\,x^{\ell_1}).
\]
{\rm (c)} It follows from Lemma \ref{L-linalg}, applied to
$M=V_{m_2}$, that $\lambda(x_1)$ coincides with the diagram formed
by the first $\ell_1$ columns of $\lambda(x)$, i.e., the shape of
$d_1$.

Let $(W_0,\ldots,W_{n_1})\in\mathrm{Fl}_{x_1,d_1}(W(F))$, and let us
consider the element
\[\tilde{F}:=(V_0,\ldots,V_{m_2},\pi^{-1}(W_1),\ldots,\pi^{-1}(W_{n_1}),(x^{\ell_1})^{-1}(V_{m_2+1}),\ldots,(x^{\ell_1})^{-1}(V_{n_2})).\]
First note that $\dim \pi^{-1}(W_i)=\dim W_i+\dim V_{m_2}=m_2+i$ for
all $i\in\{1,\ldots,n_1\}$, while the rank formula yields
\[\dim
(x^{\ell_1})^{-1}(V_{m_2+i})=\dim V_{m_2+i}+\dim\ker
x^{\ell_1}=n_1+m_2+i\] for all $i\in\{1,\ldots,m_2\}$. In addition,
for every $i\in\{1,\ldots,n_1\}$, the $x_1$-stability of $W_i$
guarantees that $\pi^{-1}(W_i)$ is $x$-stable, while for every
$i\in\{1,\ldots,m_2\}$, the $x$-stability of $V_{m_2+i}$ implies
that $(x^{\ell_1})^{-1}(V_{m_2+i})$ is $x$-stable. Thereby
\[\tilde{F}\in\mathrm{Fl}_x(V).\]
For every $i\in\{0,\ldots,\lfloor\frac{n_1}{2}\rfloor\}$, there is a
natural isomorphism
$\pi^{-1}(W_{n_1-i})/\pi^{-1}(W_i)\stackrel{\sim}{\to}
W_{n_1-i}/W_i$, and the nilpotent maps
$x|_{\pi^{-1}(W_{n_1-i})/\pi^{-1}(W_i)}$ and
${x_1}|_{W_{n_1-i}/W_i}$ respectively induced by $x$ and $x_1$ are
conjugate under this linear isomorphism, hence
\begin{equation}
\label{shape1}
\lambda(x|_{\pi^{-1}(W_{n_1-i})/\pi^{-1}(W_i)})=\lambda({x_1}|_{W_{n_1-i}/W_i})=\lambda^{\lfloor
\frac{n_1}{2}\rfloor-i}(d_1)=\lambda^{\lfloor
\frac{n}{2}\rfloor-(m_2+i)}(d)
\end{equation}
(the last equality in (\ref{shape1}) follows from the assumption
that $d=d_1+d_2$). Next, let $i\in\{0,\ldots,m_2\}$, and let us
consider the nilpotent endomorphism
\[
\tilde{x}_i:=x|_{(x^{\ell_1})^{-1}(V_{n_2-i})/V_i}:(x^{\ell_1})^{-1}(V_{n_2-i})/V_i\to
(x^{\ell_1})^{-1}(V_{n_2-i})/V_i
\]
induced by $x$. We have
\[
\mathrm{Im}\,\tilde{x}_i^{\ell_1}=V_{n_2-i}/V_i \quad\mbox{and}\quad
\ker\tilde{x}_i^{\ell_1}=(x^{\ell_1})^{-1}(V_i)/V_i.
\]
The fact that $(V_0,\ldots,V_{n_2})\in
\mathrm{Fl}_{x_2,d_2}(\mathrm{Im}\,x^{\ell_1})$ and Lemma
\ref{L-linalg} (applied with $M=V_i$) respectively imply
\[
\lambda({\tilde{x}_i}|_{\mathrm{Im}\,\tilde{x}_i^{\ell_1}})=\lambda({x_2}|_{V_{n_2-i}/V_i})=\lambda^{m_2-i}(d_2)
\]
and
\[
\lambda({\tilde{x}_i}|_{\ker\tilde{x}_i^{\ell_1}})=\lambda(x|_{(x^{\ell_1})^{-1}(V_i)/V_i})=\lambda^1,
\]
where $\lambda^1$ stands for the diagram formed by the first
$\ell_1$ columns of $\lambda(x)$, that is, the shape of $d_1$ (since
$d=d_1+d_2$). Invoking again Lemma \ref{L-linalg}, we deduce that
the diagram $\lambda(\tilde{x}_i)$ is obtained by juxtaposing the
diagrams $\lambda^1$ and $\lambda^{m_2-i}(d_2)$. By definition of
the concatenation $d=d_1+d_2$, this yields
\begin{equation}
\label{shape2}
\lambda(\tilde{x}_i)=\lambda^{\lfloor\frac{n}{2}\rfloor-i}(d)\
\mbox{\ for all $i\in\{0,\ldots,m_2\}$.}
\end{equation}
Combining (\ref{shape1}) and (\ref{shape2}), we conclude that
$\tilde{F}\in\mathrm{Fl}_{x,d}(V)$.
\end{proof}

\begin{proof}[Proof of Proposition \ref{P12}]
Clearly, it is sufficient to show part {\rm (b)} of the statement.
Let $x_2=x|_{\mathrm{Im}\,x^{\ell_1}}$. Lemma \ref{L2}\,{\rm (b)}
and Example \ref{E1}\,{\rm (a)}--{\rm (b)} imply that
\[ \mathrm{Im}\,x^{\ell_1+1}=\mathrm{Im}\,x_2\subset V_{m_2}\subset\ker x_2=\ker x\cap\mathrm{Im}\,x^{\ell_1}\]
for all $F=(V_0,\ldots,V_n)\in \overline{\mathrm{Fl}_{x,d}(V)}$.
Note also that
$\dim \mathrm{Im}\,x^{\ell_1+1}=\mathrm{rank}\,x_2=k_2$.
This guarantees that the map $\Theta$ is well defined and algebraic.

Let $M=\ker x\cap\mathrm{Im}\,x^{\ell_1}/\mathrm{Im}\,x^{\ell_1+1}$. (Thus $\dim M=n_2-2k_2$.)
The group $\mathrm{SL}(M)$ embeds in $Z_G(x)$ in a natural way (see, e.g., \cite[Chapter 6]{Collingwood-McGovern}), so that $\Theta$ is in fact $\mathrm{SL}(M)$-equivariant.
In addition, the action of $\mathrm{SL}(M)$ on $\mathrm{Gr}_{m_2-k_2}(M)$ is transitive
and, by Bruhat's lemma, for every $L\in\mathrm{Gr}_{m_2-k_2}(M)$, there is
a closed subset $U=U(L)\subset\mathrm{SL}(M)$ (in fact, a unipotent subgroup)
such that the map
\[U\to\mathrm{Gr}_{m_2-k_2}(M),\ u\mapsto u(L)\]
is an open immersion.
Then, the map
\[U\times \Theta^{-1}(L)\to\Theta^{-1}(U\cdot L),\ (u,F)\mapsto u\cdot F\]
is a trivialization of $\Theta$ above $U\cdot L:=\{u(L):u\in U\}$.
We have shown that $\Theta$ is locally trivial.

It remains to determine the fiber $\Theta^{-1}(L)$ for a given
$L\in\mathrm{Gr}_{m_2-k_2}(M)$. In other words, $L$ can be viewed as
an $m_2$-dimensional space such that
\[\mathrm{Im}\,x^{\ell_1+1}\subset L\subset \ker x\cap\mathrm{Im}\,x^{\ell_1},\]
and $\hat{L}:=(x^{\ell_1})^{-1}(L)$ has dimension $\dim L+\dim\ker
x^{\ell_1}=n-m_2$. The set
\[X_L:=\{F=(V_0,\ldots,V_n)\in\mathrm{Fl}(V):V_{m_2}=L\ \mbox{and}\ V_{n-m_2}=\hat{L}\}\]
is a closed subvariety of $\mathrm{Fl}(V)$ and, by Lemma
\ref{L2}\,{\rm (a)}, we have
\begin{equation}
\label{fiber-1} \Theta^{-1}(L)=\overline{\mathrm{Fl}_{x,d}(V)}\cap
X_L.
\end{equation}
Note that there is a natural isomorphism
\[\Xi:\mathrm{Fl}(L)\times\mathrm{Fl}(\hat{L}/L)\times \mathrm{Fl}(V/\hat{L})\stackrel{\sim}{\to} X_L.\]
The nilpotent map $x^{\ell_1}$ induces a linear isomorphism
$V/\hat{L}\stackrel{\sim}{\to}\mathrm{Im}\,x^{\ell_1}/L$, hence
every element of $\mathrm{Fl}(V/\hat{L})$ is of the form
$((x^{\ell_1})^{-1}(V_{m_2}),\ldots,(x^{\ell_1})^{-1}(V_{n_2}))$ for
some
$(V_{m_2},\ldots,V_{n_2})\in\mathrm{Fl}(\mathrm{Im}\,x^{\ell_1}/L)$.
This observation, combined with Lemma \ref{L2}\,{\rm (c)}, the
description of
$\overline{\mathrm{Fl}_{x_2,d_2}(\mathrm{Im}\,x^{\ell_1})}$ (with
$d_2=d_{n_2,k_2}$) given in (\ref{5}), and (\ref{fiber-1}), yields
the inclusion
\begin{equation}
\label{fiber-2}
\Xi\big(\mathrm{Fl}(L)\times\overline{\mathrm{Fl}_{x_1,d_1}(\hat{L}/L)}\times
\mathrm{Fl}(V/\hat{L})\big)\subset\Theta^{-1}(L),
\end{equation}
where $x_1$ is the nilpotent map induced by $x$ on the subquotient
$\hat{L}/L=(x^{\ell_1})^{-1}(L)/L$. On the other hand, since
$\overline{\mathrm{Fl}_{x,d}(V)}$ and $\mathrm{Grass}_{m_2-k}(M)$
are irreducible, the fiber $\Theta^{-1}(L)$ must be irreducible, and
\[
\dim \Theta^{-1}(L) = \dim\overline{\mathrm{Fl}_{x,d}(V)}-\dim
\mathrm{Grass}_{m_2-k}(M).
\]
Invoking the equidimensionality of $\mathrm{Fl}_x(V)$, formula
(\ref{dimension-Flx}), and the fact that the Jordan form
$\lambda(x)$ is obtained by juxtaposing
$\lambda(x_1)$ and $\lambda(x_2)$ (see Lemma \ref{L2}), we have
\begin{eqnarray*}
\dim\overline{\mathrm{Fl}_{x,d}(V)} & = & \dim\mathrm{Fl}_{x}(V) =
\dim\mathrm{Fl}_{x_1}(\hat{L}/L)+\dim\mathrm{Fl}_{x_2}(\mathrm{Im}\,x^{\ell_1})
\\
 & = & \dim\overline{\mathrm{Fl}_{x_1,d_1}(\hat{L}/L)}+\binom{n_2-k_2}{2}+\binom{k_2}{2} \\
 & = & \dim\overline{\mathrm{Fl}_{x_1,d_1}(\hat{L}/L)}+2\binom{m_2}{2}+(m_2-k_2)^2
\\
 & = & \dim\overline{\mathrm{Fl}_{x_1,d_1}(\hat{L}/L)}+\dim
 \mathrm{Fl}(L)\times\mathrm{Fl}(V/\hat{L})+\dim\mathrm{Grass}_{m_2-k_2}(M).
\end{eqnarray*}
Whence
\[\dim \Theta^{-1}(L) = \dim \mathrm{Fl}(L)\times\overline{\mathrm{Fl}_{x_1,d_1}(\hat{L}/L)}\times
\mathrm{Fl}(V/\hat{L}).\] Therefore, equality holds in
(\ref{fiber-2}). The proof is complete.
\end{proof}

\subsection{Proof of Theorem \ref{T1}}

\label{section-5.3}

Let $x\in\mathfrak{g}(V,\omega)$ be a nilpotent element, i.e., a
nilpotent endomorphism of $V$ which is skew adjoint with respect to
the form $\omega$. Let $\lambda(x)\vdash n$ be the Jordan form of
$x$, seen as a Young diagram, and let $d=d_x^\omega$ be the
$\omega$-admissible domino tableau of shape $\lambda(x)$ defined in
(\ref{7}) (if $\omega$ is orthogonal) or (\ref{8}) (if $\omega$ is
symplectic). An immediate induction argument based on Example
\ref{E1}\,{\rm (a)}--{\rm (b)}, Proposition \ref{P12}, and the
construction of $d_x^\omega$ made in (\ref{7})--(\ref{8}) shows that
the irreducible component
$\overline{\mathrm{Fl}_{x,d_x^\omega}(V)}\subset\mathrm{Fl}_x(V)$
(see Section \ref{section-4.2}) is smooth. Combining this fact with
Propositions \ref{P10}--\ref{P11} completes the proof of the theorem.

\section{An example}

\label{section-6}

In this section we let $G=\mathrm{Sp}_6(\mathbb{K})$ and consider a
nilpotent element $x\in\mathfrak{sp}_6(\mathbb{K})$ of Jordan form
$\lambda(x)=(2,2,1,1)$. There are exactly three $\omega$-admissible
domino tableaux of shape $\lambda(x)$ (here $\omega$ is a symplectic
form on the space $V=\mathbb{K}^6$), namely
\[d_1=
\mbox{\scriptsize $\young(11,22,3,3)$}\,,\qquad d_2=\mbox{\scriptsize $\young(13,13,2,2)$}\,,\qquad d_3=\mbox{\scriptsize $\young(12,12,3,3)$}\,.\]
These domino tableaux yield three irreducible components
$\overline{\mathrm{Fl}_{x,d_1}(V)}$, $\overline{\mathrm{Fl}_{x,d_2}(V)}$, and $\overline{\mathrm{Fl}_{x,d_3}(V)}$
of the (type A) Springer fiber
$\mathrm{Fl}_x(V)$ (see Section \ref{section-4.2}).
They also give rise to a decomposition of the (type C) Springer fiber
\[\mathcal{B}_x\cong \mathrm{Fl}_x(V,\omega)=\overline{\mathrm{Fl}_{x,d_1}(V,\omega)}\cup \overline{\mathrm{Fl}_{x,d_2}(V,\omega)}\cup \overline{\mathrm{Fl}_{x,d_3}(V,\omega)}\]
and we know that each subvariety
$\overline{\mathrm{Fl}_{x,d_i}(V,\omega)}$ ($i\in\{1,2,3\}$) is a
union of irreducible components of $\mathrm{Fl}_x(V,\omega)$ (see
Proposition \ref{P9}). Recall that $A_x:=Z_G(x)/Z_G(x)^0$ stands for
the component group of the stabilizer of $x$; for the nilpotent
element $x$ considered in this section, we have
$A_x\cong\mathbb{Z}/2\mathbb{Z}$.

\begin{proposition}
\label{P13}
\begin{itemize}
\item[\rm (a)] $\overline{\mathrm{Fl}_{x,d_1}(V,\omega)}$ is smooth and irreducible.
\item[\rm (b)] $\overline{\mathrm{Fl}_{x,d_2}(V,\omega)}$
consists of two irreducible components which form a single $A_x$-orbit $\mathcal{C}$.
These components are smooth and their intersection is empty. The orbital variety corresponding to $\mathcal{C}$ in the sense of Proposition \ref{P1} is smooth.
\item[\rm (c)] $\overline{\mathrm{Fl}_{x,d_3}(V,\omega)}$ consists
of two irreducible components which form a single $A_x$-orbit
$\mathcal{C}'$. These components are smooth, but their intersection
is nonempty. This implies that the (type A) component
$\overline{\mathrm{Fl}_{x,d_3}(V)}$ is singular. This also implies
that the orbital variety corresponding to $\mathcal{C}'$ in the sense
of Proposition \ref{P1} is singular.
\item[\rm (d)] Thus the Springer fiber $\mathcal{B}_x\cong\mathrm{Fl}_x(V,\omega)$ has exactly five irreducible components
and all of them are smooth. The nilpotent orbit
$\mathcal{O}_x=G\cdot x$ contains exactly three orbital varieties,
two of them are smooth and one is singular.
\end{itemize}
\end{proposition}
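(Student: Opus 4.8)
The plan is to analyze each of the three subvarieties $\overline{\mathrm{Fl}_{x,d_i}(V,\omega)}$ ($i=1,2,3$) from its two nontrivial defining conditions --- those on $\lambda(x|_{V_1^\perp/V_1})$ and $\lambda(x|_{V_2^\perp/V_2})$ --- and then to assemble the conclusions via Propositions \ref{P1}, \ref{P3}, \ref{P9}, \ref{P11} together with van Leeuwen's description of the $Z_G(x)$-action on the components of $\mathrm{Fl}_x(V,\omega)$. Throughout I write $I=\mathrm{Im}\,x$ and $K=\ker x$; skew-adjointness of $x$ gives $I=K^\perp\subset K$ with $\dim I=2$, $\dim K=4$, and the bilinear form $B(u,v):=\omega(u,xv)$ descends to a nondegenerate \emph{symmetric} form on $V/K$. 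Part (a) is Theorem \ref{T1} for this $x$: here $\lambda(x)$ has columns of lengths $(4,2)$, so in the notation of Notation \ref{N1} one has $d_x^\omega=d_{6,2}=d_1$; by Example \ref{E1}(b) and (\ref{5}) the type-A component $\overline{\mathrm{Fl}_{x,d_1}(V)}=\mathcal{K}_{6,2}$ is smooth, $\overline{\mathrm{Fl}_{x,d_1}(V,\omega)}$ is irreducible by Proposition \ref{P11}(a), and it is smooth by the argument in the proof of Proposition \ref{P10} (apply Proposition \ref{P3} with $H=\{\mathrm{id},\sigma\}$).

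For part (b) I would argue that the condition $\lambda(x|_{V_1^\perp/V_1})=(1^4)$ reads $x(V_1^\perp)\subset V_1$, which is closed, forces $K\subset V_1^\perp$ and hence $V_1\subset I$, and in fact forces $V_1$ to be one of the two distinct lines $L_1,L_2\subset I$ for which $x(L_j^\perp)\subset L_j$ (equivalently, whose annihilator in $V/K$ is $B$-isotropic); the second condition is then automatic. Therefore $\overline{\mathrm{Fl}_{x,d_2}(V,\omega)}=Y_1\sqcup Y_2$ with $Y_j=\{F\in\mathrm{Fl}(V,\omega):V_1=L_j\}$, and since $x$ vanishes on $L_j^\perp/L_j$ each $Y_j$ is the isotropic flag variety of the $4$-dimensional symplectic space $L_j^\perp/L_j$, hence smooth and irreducible; the two are disjoint. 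By \cite{van-Leeuwen} (or using that the reductive part of $Z_G(x)$ is $\mathrm{Sp}_2\times\mathrm{O}_2$, cf.\ \cite[Ch.~6]{Collingwood-McGovern}) the nontrivial element of $A_x$ interchanges $L_1$ and $L_2$, so $\mathcal{C}=\{Y_1,Y_2\}$ is a single $A_x$-orbit; as its members are smooth and pairwise disjoint, Proposition \ref{P1}(a) shows the corresponding orbital variety is smooth.

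For part (c) the analysis is similar but the geometry differs. Now $\lambda(x|_{V_1^\perp/V_1})=(2,2)$ forces $V_1\in\mathcal{U}:=\mathbb{P}(K)\setminus\mathbb{P}(I)$; on $\bar V:=V_1^\perp/V_1$ the induced $\bar x$ has type $(2,2)$, so $\bar K:=\ker\bar x=\mathrm{Im}\,\bar x$ is Lagrangian, the form $\bar B(\bar u,\bar v):=\bar\omega(\bar u,\bar x\bar v)$ is nondegenerate symmetric on $\bar V/\bar K$, and the second condition becomes $\bar x(\bar V_2^\perp)\subset\bar V_2$, which --- by the argument of (b) --- forces $V_2/V_1$ to be one of two $\bar B$-special lines (in particular $V_2\subset K$), after which $V_3/V_1$ may be any of a $\mathbb{P}^1$ of $\bar x$-stable planes between $\bar V_2$ and $\bar V_2^\perp$. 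Thus $\mathrm{Fl}_{x,d_3}(V,\omega)$ is a $\mathbb{P}^1$-bundle over the finite \'etale double cover of $\mathcal{U}$ given by the two choices of $V_2$; since $\mathcal{U}=\mathbb{P}^3\setminus\mathbb{P}^1$ is simply connected this cover is trivial, so $\overline{\mathrm{Fl}_{x,d_3}(V,\omega)}$ has exactly two irreducible components $C_1,C_2$, which form one $A_x$-orbit $\mathcal{C}'$ (see \cite{van-Leeuwen}) and which one checks to be smooth. The feature distinguishing this case from (b) is that $C_1\cap C_2\neq\emptyset$: one exhibits a flag with $V_1\subset I$ lying in both components (as $V_1$ degenerates into $\mathbb{P}(I)$ the form $\bar B$ degenerates and its two isotropic lines collide). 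Hence $\overline{\mathrm{Fl}_{x,d_3}(V,\omega)}$ is a union of two distinct, intersecting components, so it is singular; being (by Proposition \ref{P9}) a union of components of $(\overline{\mathrm{Fl}_{x,d_3}(V)})^\sigma$, this forces that fixed locus, and hence by Proposition \ref{P3} the type-A component $\overline{\mathrm{Fl}_{x,d_3}(V)}$ itself, to be singular; and since the members of $\mathcal{C}'$ are smooth but not pairwise disjoint, Proposition \ref{P1}(a) shows the corresponding orbital variety is singular.

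Part (d) then follows by bookkeeping: by (\ref{partition}) and Proposition \ref{P9} the Springer fiber $\mathcal{B}_x\cong\mathrm{Fl}_x(V,\omega)$ has $1+2+2=5$ irreducible components, all smooth by (a)--(c), and via Proposition \ref{P1} the orbital varieties of $\mathcal{O}_x$ correspond to the $A_x$-orbits $\{\overline{\mathrm{Fl}_{x,d_1}(V,\omega)}\}$, $\mathcal{C}$, $\mathcal{C}'$ of these components --- three in all, the first two smooth and the third singular. I expect the main obstacle to lie in part (c): proving each $C_j$ is itself smooth --- Proposition \ref{P3} is unavailable since $\overline{\mathrm{Fl}_{x,d_3}(V)}$ is singular, so one must compute tangent spaces or produce an explicit smooth model of $C_j$ and control its behaviour along the boundary locus $\{V_1\in\mathbb{P}(I)\}$ --- and in producing an explicit flag in $C_1\cap C_2$; the identification of the $A_x$-action in (b) and (c) relies on van Leeuwen's signed-domino-tableau combinatorics.
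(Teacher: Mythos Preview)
Your treatment of parts {\rm (a)}, {\rm (b)}, and {\rm (d)} is essentially the paper's: the paper likewise identifies the two lines $L_1,L_2\subset I$ via a nondegenerate symmetric form (defined on $I$ rather than on $V/K$, which is the same thing via $x$), obtains $\overline{\mathrm{Fl}_{x,d_2}(V,\omega)}=Z_{2,1}\sqcup Z_{2,2}$ with $Z_{2,j}=\{F:V_1=L_j\}\cong\mathrm{Fl}(L_j^\perp/L_j,\omega)$, and handles the $A_x$-action by the explicit involution $h\in Z_G(x)$ swapping $L_1$ and $L_2$ (your $\mathrm{O}_2$-alternative), not by van Leeuwen's combinatorics.

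Where the paper differs, and where it dissolves exactly the obstacles you flag, is part {\rm (c)}. Instead of analysing $\mathrm{Fl}_{x,d_3}(V,\omega)$ fibrewise over the open set $\mathcal{U}=\mathbb{P}(K)\setminus\mathbb{P}(I)$ and then struggling with closures, the paper writes down the two components directly as \emph{closed} subvarieties
\[
Z_{3,j}=\{F\in\mathrm{Fl}(V,\omega):L_j\subset V_2\subset K\}\qquad(j=1,2),
\]
using the \emph{same} lines $L_1,L_2$ as in {\rm (b)}. The point you are one step away from is that, for $V_1\in\mathcal{U}$, your two ``$\bar B$-special'' choices of $V_2$ are globally just $V_2=V_1+L_1$ and $V_2=V_1+L_2$; so your double cover of $\mathcal{U}$ is not merely abstractly trivial but canonically trivialised by the $L_j$, and each sheet closes up to $Z_{3,j}$. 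With this description, smoothness of each component is immediate (the map $F\mapsto V_2/L_j$ exhibits $Z_{3,j}$ as a $\mathbb{P}^1\times\mathbb{P}^1$-bundle over $\mathbb{P}(K/L_j)\cong\mathbb{P}^2$), no boundary analysis or tangent-space computation is needed, and the intersection is read off at once:
\[
Z_{3,1}\cap Z_{3,2}=\{F\in\mathrm{Fl}(V,\omega):V_2=I\}\neq\emptyset.
\]
Thus your simple-connectedness argument for counting components and your degeneration heuristic for the intersection are correct in spirit but unnecessary; the closed description $L_j\subset V_2\subset K$ is the missing idea that makes part {\rm (c)} as clean as part {\rm (b)}.
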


\begin{proof}
We have $\mathrm{Im}\,x\subset\ker x$, $\dim \mathrm{Im}\,x=2$, and
$\dim\ker x=4$. Since $x$ is skew adjoint, the equality
\begin{equation}
\label{Imker} \ker x=(\mathrm{Im}\,x)^\perp \end{equation} holds. We
define the following closed subsets of $\mathrm{Fl}(V,\omega)$:
\begin{eqnarray*}
Z_1 & = & \{(V_0,\ldots,V_6)\in\mathrm{Fl}(V,\omega):\mathrm{Im}\,x\subset V_3\subset \ker x\}, \\
Z_2 & = & \{(V_0,\ldots,V_6)\in\mathrm{Fl}(V,\omega):x(V_5)\subset
V_1\}, \\
Z_3 & = & \{(V_0,\ldots,V_6)\in\mathrm{Fl}(V,\omega):V_2\subset\ker
x,\ x(V_4)\subset V_2\}.
\end{eqnarray*}
Whenever $(V_0,\ldots,V_6)\in Z_1\cup Z_2\cup Z_3$, the subspaces
$V_1,V_2,V_3$ are clearly $x$-stable, and since $x$ is skew adjoint,
$V_4(=V_2^\perp)$ and $V_5(=V_1^\perp)$ are $x$-stable as well. Thus
\[\mbox{$Z_1,Z_2,Z_3$ are subsets of $\mathrm{Fl}_x(V,\omega)$}.\]
Since the domino tableau $d_1$ coincides with the tableau $d_{6,2}$
of Example \ref{E1}\,{\rm (b)}, the inclusion
$\mathrm{Fl}_{x,d_1}(V,\omega)\subset Z_1$ follows from relation
(\ref{5}). The inclusion $\mathrm{Fl}_{x,d_2}(V,\omega)\subset Z_2$
follows from the definition of $\mathrm{Fl}_{x,d_2}(V,\omega)$ (note
that the condition $x(V_5)\subset V_1$ is equivalent to saying that
the nilpotent endomorphism $x|_{V_5/V_1}$ induced by $x$ has Jordan
form $(1,1,1,1)$). Finally, for every
$(V_0,\ldots,V_6)\in\mathrm{Fl}_{x,d_3}(V,\omega)$, we must have
$x(V_4)\subset V_2$ (since $x|_{V_4/V_2}$ has Jordan form $(1,1)$)
and $V_2\subset \ker x$ (since $x(V_2)\subset V_1\cap
\mathrm{Im}\,x=0$ where the last equality follows from the fact that
$\mathrm{rank}\,x|_{V_5/V_1}=\mathrm{rank}\,x=2$); whence
$\mathrm{Fl}_{x,d_3}(V,\omega)\subset Z_3$. Altogether we get the
inclusions \begin{equation} \label{inclusion}
\mathrm{Fl}_{x,d_i}(V,\omega)\subset Z_i\ \mbox{ for all
$i\in\{1,2,3\}$}\end{equation} and thereby (invoking
(\ref{partition}))
\begin{equation}
\label{union}
\mathrm{Fl}_x(V,\omega)=Z_1\cup Z_2\cup Z_3.
\end{equation}

For describing the structure of $Z_1$, $Z_2$, and $Z_3$, we need more
notation. By (\ref{Imker}), the subspace $\mathrm{Im}\,x$ is endowed
with a well-defined orthogonal form $\chi$ given by
\[\chi(x(v_1),x(v_2))=\omega(v_1,x(v_2))\quad\mbox{whenever $v_1,v_2\in V$.}\]
Since the space $\mathrm{Im}\,x$ is $2$-dimensional, it contains
exactly two $\chi$-isotropic lines $L_1$ and $L_2$. In addition
\begin{equation}
\label{Li} L_i^\perp=x^{-1}(L_i)\quad\mbox{for
$i\in\{1,2\}$}.\end{equation} It can also be seen that
\begin{equation}
\label{iota'} \mbox{there is an involutive element $h\in Z_G(x)$
such that $h(L_1)=L_2$}
\end{equation}
(more generally, the orthogonal group
$\mathrm{O}(\mathrm{Im}\,x,\chi)$ embeds in $Z_G(x)$; see, e.g.,
\cite[Chapter 6]{Collingwood-McGovern}). For $i\in\{1,2\}$, let
\begin{eqnarray*}
Z_{2,i} & = & \{(V_0,\ldots,V_6)\in\mathrm{Fl}(V,\omega):V_1=L_i\}, \\
Z_{3,i} & = & \{(V_0,\ldots,V_6)\in\mathrm{Fl}(V,\omega):L_i\subset
V_2\subset\ker x\}.
\end{eqnarray*}
Using (\ref{Li}), it is easy to see that
\[
Z_2=Z_{2,1}\cup
Z_{2,2}\quad\mbox{and}\quad
Z_3=Z_{3,1}\cup
Z_{3,2},
\]
hence (by (\ref{union}))
\[
\mathrm{Fl}_x(V,\omega)=Z_1\cup Z_{2,1}\cup Z_{2,2}\cup Z_{3,1}\cup Z_{3,2}.
\]

The map $Z_1\to \mathbb{P}(\ker
x/\mathrm{Im}\,x)\cong\mathbb{P}^1(\mathbb{K})$,
$(V_0,\ldots,V_6)\mapsto V_3/\mathrm{Im}\,x$ is a locally trivial
fiber bundle of fiber isomorphic to $\mathrm{Fl}(\mathbb{K}^3)$. For
$i\in\{1,2\}$, the subvariety $Z_{2,i}$ is isomorphic to the flag
variety $\mathrm{Fl}(L_i^\perp/L_i,\omega)$ in a natural way. In the
same way, for $i\in\{1,2\}$, the map $Z_{3,i}\to\mathbb{P}(\ker
x/L_i)\cong\mathbb{P}^2(\mathbb{K})$, $(V_0,\ldots,V_6)\mapsto
V_2/L_i$ is a locally trivial fiber bundle of typical fiber
$\mathbb{P}(V_2)\times\mathbb{P}(V_2^\perp/V_2)\cong
\mathbb{P}^1(\mathbb{K})\times \mathbb{P}^1(\mathbb{K})$. Altogether
we have shown that $Z_1,Z_{2,i},Z_{3,i}$ ($i\in\{1,2\}$) are smooth,
irreducible, $4$-dimensional subvarieties of
$\mathrm{Fl}_x(V,\omega)$. We conclude that
\begin{eqnarray*}
 & \mbox{$Z_1,Z_{2,1},Z_{2,2},Z_{3,1},Z_{3,2}$ are the irreducible
components of $\mathrm{Fl}_x(V,\omega)$,} \\
 & \mbox{all of them are smooth.}
\end{eqnarray*}

Since $\overline{\mathrm{Fl}_{x,d_i}(V,\omega)}$ (for $i\in\{1,2,3\}$)
are unions of irreducible components of $\mathrm{Fl}_x(V,\omega)$, the inclusions in (\ref{inclusion}) yield in fact
\begin{equation}
\label{equalities}
\overline{\mathrm{Fl}_{x,d_1}(V,\omega)}=Z_1,\quad
\overline{\mathrm{Fl}_{x,d_2}(V,\omega)}=Z_{2,1}\cup Z_{2,2},\quad
\overline{\mathrm{Fl}_{x,d_3}(V,\omega)}=Z_{3,1}\cup Z_{3,2}.
\end{equation}
The first equality in (\ref{equalities}) implies part {\rm (a)} of
the proposition. The second equality, combined with the observation
that $h(Z_{2,1})=Z_{2,2}$ (with $h$ in (\ref{iota'})) and
$Z_{2,1}\cap Z_{2,2}=\emptyset$, yields part {\rm (b)} of the
proposition. Finally, part {\rm (c)} of the statement is implied by
the last equality in (\ref{equalities}), combined with the
observation that $h(Z_{3,1})=Z_{3,2}$ and
\begin{equation}
\label{intersection}
Z_{3,1}\cap
Z_{3,2}=\{(V_0,\ldots,V_6)\in\mathrm{Fl}(V,\omega):V_2=\mathrm{Im}\,x\}\not=\emptyset;
\end{equation}
note that $Z_{3,1}$ and $Z_{3,2}$ are also irreducible components of
the fixed point set $(\overline{\mathrm{Fl}_{x,d_3}(V)})^\sigma$
(see the proof of Proposition \ref{P10}), so that
(\ref{intersection}) implies that
$(\overline{\mathrm{Fl}_{x,d_3}(V)})^\sigma$ and thus
$\overline{\mathrm{Fl}_{x,d_3}(V)}$ (see Proposition \ref{P3}) are
singular.
\end{proof}

\begin{remark}
{\rm (a)}
The number of irreducible components contained in
each subvariety $\overline{\mathrm{Fl}_{x,d_i}(V,\omega)}$ ($i\in\{1,2,3\}$)
and the action of $A_x$ on the set of components
can also be deduced from the results in \cite{van-Leeuwen}.
Moreover, the type A components $\overline{\mathrm{Fl}_{x,d_1}(V)}$ and $\overline{\mathrm{Fl}_{x,d_2}(V)}$ are smooth
(by Example \ref{E1} and Proposition \ref{P12}\,{\rm (a)}), which implies that the varieties
$\overline{\mathrm{Fl}_{x,d_1}(V,\omega)}$ and $\overline{\mathrm{Fl}_{x,d_2}(V,\omega)}$ are smooth (see the proof of Proposition \ref{P10}).
Proposition \ref{P13}\,{\rm (a)}--{\rm (b)} also follows from these observations.
\\
{\rm (b)} The singularity of the type A component $\overline{\mathrm{Fl}_{x,d_3}(V)}$, stated in Proposition \ref{P13}\,{\rm (c)},
is well known: it is already stated and shown in \cite{Spaltenstein} and \cite{Vargas}.
The novelty lies in our argument for showing the singularity of this component, which relies on the topology of type C components.
\end{remark}

\section*{Acknowledgement}

This work was conceived during a {\it Research in Pairs} stay at the Mathematisches Forschungsinstitut Oberwolfach in January 2015. We thank the Forschungsinstitut and its staff for the support and the
delightful work conditions.

\end{document}